\theoremstyle{thmstyleone}%
\newtheorem{theorem}{Theorem}%
\newtheorem{proposition}[theorem]{Proposition}% 
\theoremstyle{thmstyletwo}%
\newtheorem{corollary}[theorem]{Corollary}%
\newtheorem{lemma}[theorem]{Lemma}%
\newtheorem{example}[theorem]{Example}%
\newtheorem{remark}[theorem]{Remark}%
\newtheorem{algo}[theorem]{Algorithm}
\newtheorem{conjecture}[theorem]{Conjecture}
\theoremstyle{thmstylethree}%
\newtheorem{definition}{Definition}%
\newcommand{\N}{\mathbb{N}}
\newcommand{\cA}{\mathcal{A}}
\newcommand{\cS}{\mathcal{S}}
\newcommand{\infw}[1]{\mathbf{#1}}
\newcommand{\word}[1]{u_{#1}}
\newcommand{\length}[1]{U_{#1}}
\newcommand{\eps}{\varepsilon}
\newcommand{\lowerb}[1]{{P_{#1}}}
\newcommand{\upperw}[1]{{q_{#1}}}
\newcommand{\upperb}[1]{{Q_{#1}}}
\newcommand{\invlex}[1]{\mathbin{{#1}_{-}}}
\DeclareMathOperator{\rep}{rep}
\DeclareMathOperator{\val}{val}
\DeclareMathOperator{\card}{Card}
\def\orcidID#1{\unskip$^{[#1]}$}
\begin{document}

\title[String attractors of some simple-Parry automatic sequences]{String attractors of some simple-Parry automatic sequences}

\author*[1]{\fnm{France} \sur{Gheeraert}\orcidID{0000-0002-8970-173X}}\email{france.gheeraert@ru.nl}

\author[2]{\fnm{Giuseppe} \sur{Romana}\orcidID{0000-0002-3489-0684}}\email{giuseppe.romana01@unipa.it}

\author[3]{\fnm{Manon} \sur{Stipulanti}\orcidID{0000-0002-2805-2465}}\email{m.stipulanti@uliege.be}

\affil*[1]{\orgdiv{Department of Mathematics}, \orgname{Radboud University}, \orgaddress{\street{Heyendaalseweg 135}, \city{Nijmegen}, \postcode{6525AJ}, \country{Netherlands}}}

\affil[2]{\orgdiv{Dipartimento di Matematica e Informatica}, \orgname{Università di Palermo}, \orgaddress{\street{Via Archirafi 34}, \city{Palermo}, \postcode{90123}, \country{Italy}}}

\affil[3]{\orgdiv{Department of Mathematics}, \orgname{University of Li\`ege}, \orgaddress{\street{All\'ee de la D\'ecouverte 12}, \city{Li\`ege}, \postcode{4000}, \country{Belgium}}}

\abstract{Firstly studied by Kempa and Prezza in 2018 as the cement of text compression algorithms, string attractors have become a compelling object of theoretical research within the community of combinatorics on words.
In this context, they have been studied for several families of finite and infinite words.
In this paper, we obtain string attractors of prefixes of particular infinite words generalizing $k$-bonacci words (including the famous Fibonacci word) and related to simple Parry numbers.
In fact, our description involves the numeration systems classically derived from the considered morphisms.
This extends our previous work published in the international conference WORDS 2023.
}

\keywords{String attractors, Numeration systems, Parry numbers, Automatic sequences, Morphic sequences, Fibonacci word}

\pacs[MSC Classification]{Primary: 68R15. Secondary: 05A05,  11A67, 68P05, 68Q45.}

\maketitle

\section{Introduction}\label{sec: intro}

Introduced in the field of data compression by Kempa and Prezza~\cite{Kampa-Prezza} in 2018, the concept of \emph{string attractor} can be conceptualized as follows: within a finite word, it is a set of positions that enables to catch all distinct factors.
Since then, questions related to string attractors have drawn the attention of many researchers from various scientific fields.
From the point of view of the theory of algorithmic complexity, the problem of finding a smallest string attractor is NP-hard~\cite{Kampa-Prezza}.
In parallel, string attractors also have applications in combinatorial pattern matching~\cite{ChristiansenEKN21}.
In order to understand the best way to measure data compressibility by exploiting repetitiveness in strings, measures have recently been introduced in relation to string attractors~\cite{KNP2020}.

Quickly after, combinatorics-on-words researchers have quite naturally seized the notion and made a systematic topic of research out of it.
String attractors have been studied in the context of some (families of) sequences:  automatic sequences~\cite{Schaeffer-Shallit-2020} with some focus on the ubiquitous Thue--Morse word~\cite{TM-sa,Schaeffer-Shallit-2020,Dolce2023} and the period-doubling word~\cite{Schaeffer-Shallit-2020}, the famous Sturmian words~\cite{saandcow,SA-LATIN22,Dvorakova2022} and their extension known as episturmian words~\cite{Dvorakova2022}, the Tribonacci word~\cite{Schaeffer-Shallit-2020} and more generally the $k$-bonacci words~\cite{CGRRSS23}, some binary generalized pseudostandard sequences~\cite{DH2023}, and bi-infinite words~\cite{BGHdM}.
Besides analyzing important families of words, another classical topic in combinatorics on words is the study of complexity functions for infinite words, such as the distinguished factor complexity function and the so-called abelian~\cite{CH1973} and binomial~\cite{RS2015} complexity functions.
A first complexity function based on string attractors was introduced in~\cite{Schaeffer-Shallit-2020} and considered for automatic sequences and linearly recurrent infinite words.
In addition to further studying such a complexity function, the authors of~\cite{SA-LATIN22,CGRRSS23} introduce and examine two other string attractor-based complexity functions.

Historically, the bond between string attractors and numeration systems was observed for the first time in~\cite{CGRRSS23}.
There, the authors consider generalizations of the Fibonacci word to larger alphabets (on $k$ letters, the corresponding word is called the \emph{$k$-bonacci word}) and show that some of their strings attractors rely on the well-known $k$-bonacci numbers.
These infinite words are \emph{morphic}, i.e., they are obtained as images, under  codings, of fixed points of morphisms.
Consequently, we currently believe that the link between string attractors and numeration systems can be adapted to other morphic sequences and we therefore raise the following general question:

\smallskip

\noindent\textbf{Question.} Given a morphic sequence $\infw{z}$, does there exist a numeration system $\cS$ such that $\infw{z}$ is $\cS$-automatic and (minimal) string attractors of the prefixes of $\infw{z}$ are easily described using $\cS$?

\smallskip

In this paper, to support this question, we study a particular family of morphic words.
More precisely, given parameters in the shape of a length-$k$ word $c=c_0 \cdots c_{k-1} \in \N^k$, we define the morphism $\mu_c$ such that $\mu_c(i) = 0^{c_i} \cdot (i+1)$ for all $i\in\{0,\ldots,k-2\}$ and $\mu_c(k-1) = 0^{c_{k-1}}$.
When it exists, we then look at the fixed point of $\mu_c$.
From a combinatorics-on-words point of view, this family is interesting as it generalizes the $k$-bonacci morphisms.
However, their main interest stems from numeration systems.
Given a simple Parry number $\beta$ (a real number for which the $\beta$-expansion of $1$ is of the form $u0^\omega$ for some finite word $u$), Fabre~\cite{Fabre} associated with it a morphism $\mu$ which is strongly related to the automaton corresponding to the base-$\beta$ numeration system (for real numbers), or alternatively to the greedy numeration system associated with the sequence $(|\mu^n(0)|)_{n \in \N}$ (for integers)~\cite{Bertrand89}. This morphism can also be used to understand the base-$\beta$ integers~\cite{Fabre,Thurston}.
It turns out that $\mu$ corresponds to $\mu_c$ where $c$ is the $\beta$-representation of 1. Note that, due to a result of Parry~\cite{Parry}, not every $c$ corresponds to a simple Parry number.
Combinatorially speaking, the fixed points of these particular morphisms have been studied for instance in~\cite{BFGK98,FMP2004,AMPF2006,T2015}.

Furthermore, the techniques used so far to obtain string attractors of infinite words do not apply to the words we consider.
Indeed, on the one hand, they are not necessarily episturmian, so we cannot use the approach from~\cite{Dvorakova2022}.
On the other hand, for some parameter $c\in \N^k$, the corresponding numeration system is not \emph{addable}, meaning that the addition within the numeration system is not recognizable by a finite automaton.
For example, this is the case of $c=3203$~\cite{Frougny1997}.
As a consequence, we cannot follow the methods from~\cite{Schaeffer-Shallit-2020}; in particular, we study words outside the framework needed to use the software \texttt{Walnut}~\cite{Shallit2022}.

Under some conditions on the parameters, we show that the prefixes of the fixed point admit string attractors strongly related to the associated numeration system. Moreover, they are of size at most one more than the alphabet size, and are therefore nearly optimal. Finally, we provide an infinite family of words for which these string attractors have minimal size.

This work extends and completes some of the results presented in our first exploratory paper on this topic~\cite{GRS-WORDS2023}. In this version, we emphasize the link with numeration systems and simple Parry numbers. We also provide new results, and as a consequence, have restructured and simplified most of the proofs.

This paper is organized as follows.
In Section~\ref{sec: prelim}, we first recall some classical notions of combinatorics on words.
In particular, we introduce the family of words that we will focus on.
We then show how these words are related to numeration systems in Section~\ref{sec: ANS}, and we study some of their properties such as greediness.
Section~\ref{sec: SA} is devoted to the construction of string attractors of the words.
More precisely, we give the precise conditions under which every prefix admits a string attractor included in a particular subset, and we exhibit such a string attractor when the conditions are met.
Then in Section~\ref{sec: frac power and anti-Lyndon}, we analyze these conditions and give alternative formulations in terms of the numeration system and of the parameters $c_0,\dots,c_{k-1}$.
Finally, in Section~\ref{sec: optimality}, we discuss the optimality of the string attractors that we obtained and describe an infinite family of words for which they are optimal.
\section{Preliminaries}\label{sec: prelim}

\subsection{Words}

We start with the bare minimum on words and introduce some notations.

Let $A$ be an alphabet either finite or infinite (for instance, we will consider words over the set of non-negative integers $\N$).
The length of a word is its number of letters and will be denoted with vertical bars $|\cdot|$.
We let $\eps$ denote the empty word, and $A^*$ denote the set of finite words over $A$.
For any integer $n\geq0$, we let $A^n$ be the set of length-$n$ words over $A$.
If $w = xyz$ for some $x,y,z \in A^*$, then $x$ is a \emph{prefix}, $y$ is a \emph{factor}, and $z$ is a \emph{suffix} of $w$.
A factor of a word is \emph{proper} if it is not equal to the initial word.
A word $v$ is a \emph{fractional power} of a non-empty word $w$ if there exist $\ell \geq 0$ and $x$ a prefix of $w$ such that $v = w^\ell x$.
Infinite words are written in bold and we start indexing them at $0$.
We use classical notations of intervals to denote portions of words.
For a non-empty word $u\in A^*$, we let $u^\omega$ denote the concatenation of infinitely many copies of $u$, that is, $u^\omega= u u u \cdots$.

Let $\leq$ be a total order on $A$. The \emph{lexicographic order} on $A^*$ induced by $\leq$ is defined as follows: for $x,y\in A^*$, we say that $x$ is \emph{lexicographically smaller than} $y$, and we write $x < y$, if either $x$ is a proper prefix of $y$, or $x=zax'$ and $y=zby'$ for some letters $a,b$ with $a < b$.
We write $x\le y$ if $x$ is lexicographically smaller than or equal to $y$.
The \emph{genealogical order}, also known as \emph{radix order}, on $A^*$ induced by $\leq$ is defined as follows: for $x,y\in A^*$, we say that $x$ is \emph{genealogically smaller than} $y$, and we write $x<_{\text{gen}} y$, if either $|x|<|y|$, or $|x|=|y|$ and $x=zax'$ and $y=zby'$ for some letters $a,b$ with $a< b$.
We write again $x\le_{\text{gen}} y$ if $x$ is genealogically smaller than or equal to $y$.

A non-empty word $w\in A^*$ is \emph{primitive} if $w=u^n$ for $u\in A^*\setminus\{\eps\}$ implies $n=1$.
Two words are \emph{conjugates} if they are cyclic permutation of each other.

A word is \emph{Lyndon} if it is primitive and lexicographically minimal among its conjugates for some given order.
Defined in the 50's, Lyndon words are not only classical in combinatorics on words but also of utmost importance.
See~\cite{Lothaire97} for a presentation.
A celebrated result in combinatorics on words is that London words form a so-called \emph{complete factorization of the free monoid}.

\begin{theorem}[Chen-Fox-Lyndon~\cite{CFL58}]\label{thm:CFL-fact}
For every non-empty word $w\in A^*$, there exists a unique factorization $(\ell_1,\ldots, \ell_n)$ of $w$ into Lyndon words over $A$ such that $\ell_1 \geq\ell_2 \geq\cdots \geq\ell_n$.
\end{theorem}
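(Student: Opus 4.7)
The plan is to derive both existence and uniqueness from a single key fact: if $u,v$ are Lyndon words with $u<v$ lexicographically, then the concatenation $uv$ is also Lyndon. I would prove this lemma by checking directly that $uv$ is strictly smaller than every non-empty proper suffix of itself. A suffix lying entirely inside $v$ is handled by combining the Lyndon property of $v$ with the hypothesis $u<v$, and a suffix of the form $u's$, where $u'$ is a proper non-empty suffix of $u$ and $s$ is the remainder, is handled by the Lyndon property of $u$, which gives $u<u'$, hence $uv<u'v$ by a common-prefix comparison.

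For existence, I would use the classical merging procedure. Start from the factorization of $w$ into single letters, each of which is trivially Lyndon. While the factorization contains an adjacent ascent $\ell_i<\ell_{i+1}$, replace the pair $(\ell_i,\ell_{i+1})$ by the single Lyndon block $\ell_i\ell_{i+1}$ provided by the key lemma. Each such merge strictly decreases the number of blocks, so the procedure terminates, and its output is by construction a non-increasing factorization of $w$ into Lyndon words.

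For uniqueness, I would argue by induction on $|w|$ and show that the last factor $\ell_n$ of any admissible factorization $(\ell_1,\ldots,\ell_n)$ of $w$ is forced to be the lexicographically smallest non-empty suffix of $w$; the claim then follows by applying the induction hypothesis to $\ell_1\cdots\ell_{n-1}$. Any non-empty suffix $s$ of $w$ can be written as $s=\ell_i'\ell_{i+1}\cdots\ell_n$, where $\ell_i'$ is a non-empty suffix of $\ell_i$. If $i=n$ and $\ell_n'$ is a proper suffix of $\ell_n$, then $\ell_n<\ell_n'=s$ by the Lyndon property of $\ell_n$. If $\ell_i'$ is a proper suffix of $\ell_i$ with $i<n$, the Lyndon property of $\ell_i$ gives $\ell_i<\ell_i'$, and combined with $\ell_i\geq\ell_n$ this yields $\ell_i'>\ell_n$, hence $s\geq\ell_i'>\ell_n$. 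Finally, if $\ell_i'=\ell_i$ with $i<n$, then $\ell_i\geq\ell_n$ together with the fact that at least one further block follows gives $s>\ell_n$ by a straightforward letter-by-letter comparison.

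The main obstacle is the concatenation lemma, which carries the combinatorial heart of the theorem; the subtle point is avoiding circular reasoning when comparing overlapping suffixes, but the direct comparison above sidesteps this. Once that lemma is in hand, existence reduces to a termination argument and uniqueness to the elementary case analysis outlined above.
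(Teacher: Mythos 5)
The paper does not supply a proof of the Chen–Fox–Lyndon theorem; it is stated with a citation to~\cite{CFL58}, and the surrounding text points the reader to~\cite{Lothaire97} for background on Lyndon words (in particular, Proposition~\ref{P:longest Lyndon prefix} is attributed to the proof in~\cite[Theorem 5.1.5]{Lothaire97}). So there is no in-paper argument to compare your proposal against.

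That said, your proposal is correct and is, in substance, the standard Lothaire-style proof: the concatenation lemma (``$u,v$ Lyndon and $u<v$ implies $uv$ Lyndon''), termination of a merging procedure for existence, and identification of the last factor with the lexicographically least non-empty suffix for uniqueness. Two small points deserve to be made explicit in a full write-up. In the concatenation lemma, the case of the suffix equal to $v$ itself needs its own line: you must show $uv<v$, and this splits into the subcase where $u$ is a prefix of $v$ (write $v=uy$, use $v<y$ from the Lyndon property of $v$) and the subcase where it is not (use the strict letter difference coming from $u<v$); ``combining the Lyndon property of $v$ with $u<v$'' is a fair summary but hides this split. In the $u's$ case you invoke that $u'$ is not a prefix of $u$; this follows because a proper suffix that is also a prefix would be a border, and Lyndon words are unbordered, which is worth stating since you use the same fact implicitly. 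With those pieces spelled out, the existence and uniqueness arguments go through exactly as you describe.
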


Several variations of Lyndon words have been considered lately: generalized Lyndon~\cite{Reut}, anti-Lyndon~\cite{Gewurz-Merola-2012}, inverse Lyndon~\cite{BdFZZ2018}, and Nyldon~\cite{Nyldon}.
In this text, we will use the second.

\begin{definition}
Let $(A,\leq)$ be a totally ordered alphabet.
We let $\leq_-$ denote the \emph{inverse order} on $A$, i.e., $b <_- a$ if and only if $a < b$ for all $a,b\in A$.
We also let $\invlex{\leq}$ denote the \emph{inverse lexicographic order} which is the lexicographic order induced by $\leq_-$.
A word is \emph{anti-Lyndon} if it is Lyndon with respect to the inverse lexicographic order.
\end{definition}

Otherwise stated, a word is anti-Lyndon if it is primitive and lexicographically maximal among its conjugates.

\begin{example}
\label{Ex: anti-Lyndon}
Let $A=\{0,1\}$ with $0<1$, so $1\invlex{<}0$. The first few anti-Lyndon words, ordered by length, are $1$, $0$, $10$, $110$, $100$, $1110$, $1100$, and $1000$.
\end{example}

\subsection{Morphisms and fixed points of interest}\label{sec: morphism and fp}

A \emph{morphism} is a map $f \colon A^* \to B^*$, where $A,B$ are alphabets, such that $f(xy)=f(x)f(y)$ for all $x,y\in A^*$.
The morphism $f$ is \emph{prolongable} on the letter $a\in A$ if $f(a)=ax$ for some $x\in A^*$ and $f^n(x)\neq \eps$ for all $n\geq0$.
In this section, we consider a specific family of morphisms defined as follows.
Note that they appear under the name \emph{generic $k$-bonacci} morphisms in~\cite[Example 2.11]{Rigo2014-vol2}.

\begin{definition}\label{def:mu-c}
Let $k \geq 2$ be an integer and let $c_0, \ldots, c_{k-1} \geq 0$ be $k$ parameters often summarized in the shape of a word $c = c_0 \cdots  c_{k-1} \geq 0^k$.
The morphism $\mu_c \colon \{0, \ldots, k-1\}^* \to \{0, \ldots, k-1\}^*$ is given by $    \mu_c(i) = 0^{c_i} \cdot (i+1)$ for all $i\in\{0,\ldots,k-2\}$ and $\mu_c(k-1) = 0^{c_{k-1}}$.
For all $n\geq0$, we then define $\word{c, n} = \mu_c^n(0)$ and $\length{c, n} = |\word{c, n}|$.
\end{definition}

When the context is clear, we will usually omit the subscript $c$ in Definition~\ref{def:mu-c}.

\begin{example}
\label{ex:c102 - words and lengths}
When $c=1^k$, we recover the $k$-bonacci morphism and words.
For $k=3$ and $c = 102$, the first few iterations of the corresponding morphism $\mu_c \colon 0 \mapsto 01, 1\mapsto 2, 2 \mapsto 00$ are given in Table~\ref{tab:c102 - words and lengths}.
Some specific factorization of the words  $(\word{c, n})_{n \geq 0}$ is highlighted in Table~\ref{tab:c102 - words and lengths}.

\begin{table}
    \caption{Construction of the sequences $(\word{n})_{n \geq 0}$ and $(\length{n})_{n \geq 0}$ for $c = 102$.}
    \label{tab:c102 - words and lengths}
    \begin{tabular}{c||c|c|c|c|c|c}
        $n$ & $0$ & $1$ & $2$ & $3$ & $4$ & $5$\\[1pt]
        \hline\rule{0pt}{.8\normalbaselineskip}
        \!\!$\word{n}$ & $0$ & $01$ & $012$ & $01200$ & $012000101$ & $012000101012012$ \\
        fact. of $\word{n}$ & $0$ & $\word{0}^1 \cdot 1$ & $\word{1}^1 \word{0}^0 \cdot 2$ & $\word{2}^1 \word{1}^0 \word{0}^2$ & $\word{3}^1 \word{2}^0 \word{1}^2$ & $\word{4}^1 \word{3}^0 \word{2}^2$ \\
        $\length{n}$ & $1$ & $2$ & $3$ & $5$ & $9$ & $15$
    \end{tabular}
\end{table}
\end{example}

The factorization presented in the previous example can be stated in general.
It gives a recursive definition of the words $(\word{c, n})_{n\geq0}$ and can be proven using a simple induction.

\begin{proposition}
\label{P:alternative definition of the words}
For all $c = c_0 \cdots  c_{k-1} \geq 0^k$, we have
\[
    \word{n} =
    \begin{cases}
        \left(\displaystyle\prod_{i = 0}^{n-1} \word{n-i-1}^{c_i}\right) \cdot n, & \text{if } 0 \leq n \leq k-1;\\
        \displaystyle\prod_{i = 0}^{k-1} \word{n-i-1}^{c_i}, & \text{if } n \geq k.
    \end{cases}
\]
\end{proposition}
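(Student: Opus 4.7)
The plan is to prove the claim by induction on $n$, using the key fact that $\word{n+1} = \mu_c(\word{n})$ which follows directly from the definition $\word{n} = \mu_c^n(0)$. For the base case $n = 0$, the empty product reduces to $\eps$, so the right-hand side becomes $\eps \cdot 0 = 0 = \mu_c^0(0) = \word{0}$, as required.

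For the inductive step, I would assume the formula holds for some $n \geq 0$ and compute $\word{n+1} = \mu_c(\word{n})$. Since $\mu_c$ is a morphism, it distributes over concatenation and preserves powers; moreover, $\mu_c(\word{m}) = \word{m+1}$ for every $m \geq 0$. Consequently, each factor $\word{n-i-1}^{c_i}$ appearing in the displayed expression for $\word{n}$ is transformed into $\word{n-i}^{c_i}$, and only the trailing letter (when present) needs separate treatment. I would then split into the three cases $n+1 \leq k-1$, $n+1 = k$, and $n+1 > k$.

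In the first case, the inductive expression for $\word{n}$ ends with the letter $n \leq k-2$, so applying $\mu_c$ yields $\prod_{i=0}^{n-1} \word{n-i}^{c_i} \cdot \mu_c(n)$; since $\mu_c(n) = 0^{c_n} \cdot (n+1) = \word{0}^{c_n} \cdot (n+1)$, the factor $\word{0}^{c_n}$ neatly extends the product by one more term and a reindexing produces exactly the claimed formula for $\word{n+1}$. The case $n+1 = k$ is essentially the same, except that $\mu_c(k-1) = 0^{c_{k-1}} = \word{0}^{c_{k-1}}$ contributes no trailing letter, so the expression for $\word{k}$ takes the purely-product form $\prod_{i=0}^{k-1} \word{(n+1)-i-1}^{c_i}$. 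Finally, when $n+1 > k$, the inductive hypothesis already gives $\word{n}$ as a product with no trailing letter, and applying $\mu_c$ termwise directly yields the desired formula.

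The entire argument is essentially bookkeeping; the only mildly delicate point is the transition at $n = k-1$, where one has to check that the $\word{0}^{c_n}$ tail produced by $\mu_c(n)$ is precisely what is needed to lengthen the product by one index at the exact moment when the trailing letter $n+1$ disappears from the expression.
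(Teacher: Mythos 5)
Your proof is correct and follows exactly the approach the paper indicates (the paper says only that the claim ``can be proven using a simple induction'' and leaves it to the reader). Your induction on $n$ via $\word{n+1}=\mu_c(\word{n})$ together with $\mu_c(\word{m})=\word{m+1}$ and the observation $0^{c_n}=\word{0}^{c_n}$ is precisely that simple induction.
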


As a consequence of Proposition~\ref{P:alternative definition of the words}, the sequence $(\length{n})_{n \geq 0}$ respects the following recurrence relation:
\begin{equation}
\label{eq: recurrence for U_n}
    \length{n}
=
\begin{cases}
    1 + \sum_{i = 0}^{n-1} c_i \length{n-i-1}, & \text{if } 0\le n \leq k-1; \\
    \sum_{i = 0}^{k-1} c_i \length{n-i-1}, & \text{if } n \geq k.
\end{cases}
\end{equation}

In the rest of the paper, we will assume the following working hypothesis \eqref{eq:WH} on $c$:
\begin{equation}\tag{WH}
\label{eq:WH}
    c = c_0 \cdots c_{k-1} \geq 0^k \text{ with } c_0, c_{k-1} \geq 1.
\end{equation}
The condition $c_{k-1} \geq 1$ ensures both that the recurrence relation is of order $k$ and that the morphism $\mu_c$ is non-erasing, which is a classical assumption in combinatorics on words.
Moreover, the condition $c_0 \geq 1$ guarantees that $\mu_c$ is prolongable. Under~\eqref{eq:WH}, the morphism $\mu_c$ has an infinite fixed point starting with 0 denoted $\infw{u} := \lim_{n \to \infty} \word{n}$.

We make the following combinatorial observation.

\begin{remark}
\label{R:extensions of letters}
Under~\eqref{eq:WH}, using Proposition~\ref{P:alternative definition of the words}, a simple induction shows that the letter $i\in \{1,\ldots,k-1\}$ can only be followed by $0$ and/or $i+1$ (and only $0$ in the case $i=k-1$) in $\infw{u}$ and in $u_n^\omega$.
\end{remark}

\section{Link with numeration systems}\label{sec: ANS}

In this section, specific definitions will be recalled.
For the reader unfamiliar with the theory of numeration systems, we refer to~\cite[Chapters 2 and 3]{CANT2010} for an introduction and some advanced concepts.

A \emph{numeration system} (for natural numbers) can be defined as a triple $\cS = (A, \rep_\cS, L)$, where $A$ is an alphabet and $\rep_\cS \colon \N \to A^*$ is an injective function such that $L = \rep_\cS(\N)$.
The map $\rep_{\cS}$ is called the \emph{representation function} and $L$ is the \emph{numeration language}.
If $\rep_\cS(n) = w$ for some integer $n\geq 0$ and some word $w\in A^*$, we say that $w$ is the \emph{representation (in $\cS$)} of $n$ and we define the \emph{valuation (in $\cS$)} of $w$ by $\val_\cS(w) = n$.
Note that, when the context is clear, we omit the subscript $\cS$ in $\rep$ and $\val$.

Any given prolongable morphism naturally gives rise to a numeration system that we will call the \emph{associated Dumont-Thomas numeration system}~\cite{Dumont-Thomas-1989}. These are based on particular factorizations of the prefixes of the fixed point.
We only give here the definition in the particular case of the morphisms studied in this paper but the interested reader can find the general case in the original paper~\cite{Dumont-Thomas-1989}.
\begin{proposition}[Dumont-Thomas~\cite{Dumont-Thomas-1989}]
\label{P:Dumont-Thomas}
Let $c$ satisfy~\eqref{eq:WH}.
For all $n \geq 1$, there exist unique integers $N, \ell_0, \ldots, \ell_N\geq 0$ such that $\ell_0 \geq 1$,
$\infw{u}[0,n) = \word{N}^{\ell_0} \cdots \word{0}^{\ell_N}$, and this factorization verifies the following: $\word{N+1}$ is not a prefix of $\infw{u}[0,n)$ and, for all $0 \leq i \leq N$, $\word{N}^{\ell_0} \cdots \word{N-i+1}^{\ell_{i-1}} \word{N-i}^{\ell_i + 1}$ is not a prefix of $\infw{u}[0,n)$.
\end{proposition}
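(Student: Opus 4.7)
The plan is to proceed by strong induction on $n$, following a greedy construction. (The statement is essentially a specialization of the general Dumont--Thomas theorem~\cite{Dumont-Thomas-1989} to $\mu_c$, but I outline a direct argument.) In the base case $n = 1$, one has $\infw{u}[0,1) = 0 = \word{0}$, and the choice $N = 0$, $\ell_0 = 1$ works trivially, since $|\word{1}| \geq 2$ and $|\word{0}^2| = 2$ under~\eqref{eq:WH}.

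For the inductive step, I first use~\eqref{eq:WH} to note that $(\length{m})_{m\geq 0}$ is strictly increasing and unbounded, so there is a unique $N \geq 0$ with $\length{N} \leq n < \length{N+1}$; this immediately yields the first non-prefix condition. I then take $\ell_0 \geq 1$ maximal such that $\word{N}^{\ell_0}$ is a prefix of $\infw{u}[0,n)$, and write $\infw{u}[0,n) = \word{N}^{\ell_0} \cdot v$. The heart of the argument is to show that $|v| < \length{N}$ and that $v$ admits a decomposition $v = \word{N-1}^{\ell_1} \cdots \word{0}^{\ell_N}$ (with some $\ell_i$ possibly zero) satisfying the analogous maximality conditions. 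The key tool is Proposition~\ref{P:alternative definition of the words}: since $\infw{u}[0,n)$ is a prefix of $\word{N+1}$, and the latter decomposes as $\word{N}^{c_0} \word{N-1}^{c_1} \cdots$, one locates $v$ as a prefix of an explicit concatenation of $\word{m}$'s with $m < N$; an induction on the length of $v$ delivers its decomposition, and padding with zero exponents at the missing indices yields the full factorization.

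Uniqueness is immediate from the two non-prefix conditions, which precisely encode the greedy choices above: $N$ is forced by the first condition, and each $\ell_i$ by the maximality implicit in the $(i+1)$-th condition. I expect the main technical difficulty to lie in the case $\ell_0 = c_0$, where $v$ is neither a prefix of $\word{N}$ nor in general of $\infw{u}$. Handling it requires a careful, possibly recursive, use of Proposition~\ref{P:alternative definition of the words} to describe the continuation of $\infw{u}$ past $\word{N}^{c_0}$ and to extract the desired sub-factorization of $v$.
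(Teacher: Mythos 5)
The paper does not prove this statement; it cites it as a specialization of Dumont--Thomas~\cite{Dumont-Thomas-1989} and only uses it as a definitional device, so there is no in-paper argument to compare against and your sketch must stand on its own.

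Your inductive step contains a false claim. You assert that after the greedy extraction $\infw{u}[0,n) = \word{N}^{\ell_0} v$ one has $|v| < \length{N}$, but this fails already for $c = 133$ (so $k=3$, $\mu(0)=01$, $\mu(1)=0002$, $\mu(2)=000$). There $\length{0},\length{1},\length{2},\length{3}=1,2,6,15$ and $\word{3}=\infw{u}[0,15)=010002010101000$; taking $n=14$ gives $N=2$ and $\ell_0=1$ (since $\word{2}^2 = 010002010002$ disagrees with $\infw{u}$ at index $9$), so $v=\infw{u}[6,14)=01010100$ has length $8>\length{2}=6$. The correct factorization is $\word{2}^1\word{1}^3\word{0}^2$: exponents such as $\ell_1=c_1=3$ occur precisely because after exhausting the first block the remainder runs through $\word{N-1}^{c_1}\word{N-2}^{c_2}\cdots$ of Proposition~\ref{P:alternative definition of the words}, and nothing forces $|v|$ below $\length{N}$. (Even the milder claim $|v|<\length{N}$ already fails with equality for $c=13$, $n=10$, where $|v|=5=\length{2}$.)

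There is a second, independent, gap that you yourself flag: when $\ell_0 = c_0$, the remainder $v$ is a prefix of $\word{N-1}^{c_1}\cdots\word{N-k+1}^{c_{k-1}}$ but is in general not a prefix of $\infw{u}$. Your inductive hypothesis, being a statement about prefixes $\infw{u}[0,m)$, therefore says nothing about $v$, and "an induction on the length of $v$" cannot deliver its decomposition without first establishing a stronger statement. To repair this one must either strengthen the induction to cover all words that arise as labels of paths in the automaton $\cA_{\mu_c}$ of Theorem~\ref{T:automata construction of numeration systems} starting from an arbitrary state (equivalently, prefixes of $\mu_c^m(j)$ for every letter $j$, not just $j=0$), or induct on $N$ while carefully tracking the block structure $\word{N}^{c_0}\word{N-1}^{c_1}\cdots$ across the transitions $j \mapsto j+1$. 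As written, the argument collapses exactly at the point you identify as the "main technical difficulty," and the auxiliary claim $|v| < \length{N}$ that you propose to lean on is simply not true.
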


Recall that a numeration system based on a suitable sequence of integers $(U_n)_{n\geq0}$ is called \emph{greedy} when, at each step of the decomposition of any integer, the largest possible term of the sequence $(U_n)_{n\geq0}$ is chosen; formally, we use the Euclidean algorithm in a greedy way.
As the conditions on the factorization in Proposition~\ref{P:Dumont-Thomas} resemble that of greedy representations in numeration systems, we will refer to it as being \emph{word-greedy}.

For a given $c$ satisfying~\eqref{eq:WH}, we then let $\cS_c$ denote the numeration system associated with the representation function $\rep_{\cS_c} \colon \N \to \N^*$ mapping $n$ to $\rep_{\cS_c}(n) = \ell_0 \cdots \ell_N$,
where the integers $\ell_0, \ldots, \ell_N$ verify the conditions of Proposition~\ref{P:Dumont-Thomas} for $n$. By convention, we set $\rep_{\cS_c}(0) = \eps$.

\begin{example}
\label{Ex:c102 - numeration system}
Using Example~\ref{ex:c102 - words and lengths} for $c = 102$, the representations of the first few integers are given in Table~\ref{tab:c102 - representations}.
The word-greedy factorization of each prefix is highlighted in the second row, leading to the representation of the corresponding integer in the third row.
\begin{table}
\caption{Illustration of the numeration system $\cS_c$ for $c = 102$.}
    \label{tab:c102 - representations}
    \begin{tabular}{c||c|c|c|c|c|c|c|c|c}
     $n$ & $0$ & $1$ & $2$ & $3$ & $4$ & $5$ & $6$ & $7$ & $8$\\[1pt]
     \hline
     \rule{0pt}{.8\normalbaselineskip}
     \!\!$\infw{u}[0,n)$ & $\eps$ & $0$ & $01$ & $012$ & $012 \cdot 0$ & $01200$ & $01200 \cdot 0$ & $01200 \cdot 01$ & $01200 \cdot 01 \cdot 0$\\
     $\rep_{\cS_c}(n)$ & $\eps$ & $1$ & $10$ & $100$ & $101$ & $1000$ & $1001$ & $1010$ & $1011$
    \end{tabular}
\end{table}
\end{example}

\begin{remark}\label{R:valuation of numeration systems}
If $\rep_{\cS_c}(n)= \ell_0 \cdots \ell_N$, then $n
= |\word{N}^{\ell_0} \cdots \word{0}^{\ell_N}| 
= \sum_{i = 0}^N \ell_i \length{N-i}. $
In other words, $\val_{\cS_c}$ is given by the usual valuation function associated with the sequence $(\length{n})_{n \geq 0}$. Such a system is sometimes called a \emph{positional} numeration system. Note that this is not necessarily the case for the Dumont-Thomas numeration system associated with some other morphism.
\end{remark}

The Dumont-Thomas numeration systems are a particular case of abstract numeration systems introduced in~\cite{Lecomte-Rigo-2001}. A numeration system $\cS = (A, \rep, L)$ is said to be \emph{abstract} if $L$ is regular and $\rep(n)$ is the $(n+1)$st word of $L$ in the genealogical order.
We have the following result.

\begin{theorem}[{Rigo~\cite[Section 2.2]{Rigo2014-vol2}}]
\label{T:automata construction of numeration systems}
Let $\sigma \colon \{\alpha_0, \ldots, \alpha_d\}^* \to \{\alpha_0, \ldots, \alpha_d\}^*$ be a morphism prolongable on the letter $\alpha_0$.
We define the automaton $\cA_\sigma$ for which $\{\alpha_0, \ldots, \alpha_d\}$ is the set of states, $\alpha_0$ is the initial state, every state is final, and the (partial) transition function $\delta$ is such that, for each $\alpha \in \{\alpha_0, \ldots, \alpha_d\}$ and $0 \leq i \leq |\sigma(\alpha)| - 1$, $\delta(\alpha, i)$ is the $(i+1)$st letter of $\sigma(\alpha)$. If $\cS = (A, \rep, L)$ is the Dumont-Thomas numeration system associated with $\sigma$, then $L = L(\cA_\sigma) \setminus 0\N^*$ and $\rep(n)$ is the $(n+1)$st word of $L$ in the genealogical order.
\end{theorem}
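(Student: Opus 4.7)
The plan is to prove both assertions simultaneously by establishing an order-preserving bijection between $\N$ and $L(\cA_\sigma) \setminus 0\N^*$, realized by the Dumont--Thomas representation $\rep$. Writing $\infw{x} = \lim_{n\to\infty}\sigma^n(\alpha_0)$ for the fixed point, the main tool is the general form of the Dumont--Thomas factorization: for each $n \geq 1$, $\infw{x}[0,n)$ factors uniquely as $\sigma^N(p_0)\sigma^{N-1}(p_1)\cdots \sigma^0(p_N)$, where each $p_i$ is a proper prefix of $\sigma(\beta_i)$, the states are defined inductively by $\beta_0=\alpha_0$ and $\beta_{i+1}$ being the letter at position $|p_i|$ of $\sigma(\beta_i)$, and $p_0 \neq \eps$, together with the usual word-greediness constraints. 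The digit form $\rep(n) = \ell_0 \cdots \ell_N$ of Proposition~\ref{P:Dumont-Thomas} then arises by setting $\ell_i = |p_i|$; in the $\mu_c$-setting each $p_i$ is automatically a power of $0$.

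First I would observe that $\rep(n)$ labels a successful run of $\cA_\sigma$ starting at $\alpha_0$. Indeed, $p_i$ being a proper prefix of $\sigma(\beta_i)$ means $0 \le \ell_i \le |\sigma(\beta_i)| - 1$, which is exactly the set of labels on transitions out of $\beta_i$; moreover, $\delta(\beta_i,\ell_i)$ is the letter at position $\ell_i$ in $\sigma(\beta_i)$, which is precisely $\beta_{i+1}$. Together with $\ell_0 \ge 1$ and the convention $\rep(0) = \eps$, this shows $\rep(\N) \subseteq L(\cA_\sigma) \setminus 0\N^*$. For the reverse inclusion, given any word $\ell_0 \cdots \ell_N$ with $\ell_0 \ge 1$ labeling an accepting run from $\alpha_0$, I define $\beta_i$ and $p_i$ by the same formulas and let $w = \sigma^N(p_0)\cdots p_N$. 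A short induction on $i$ shows that $w$ is a prefix of $\infw{x}$ (each step uses that $\sigma^{N-i}(\sigma(\beta_i))$ continues $\infw{x}$ past the cumulated factor), and the constructed factorization satisfies the Dumont--Thomas conditions by design; uniqueness then gives $\rep(|w|) = \ell_0 \cdots \ell_N$, hence $L = L(\cA_\sigma) \setminus 0\N^*$.

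It remains to verify that $\rep$ enumerates $L$ in the genealogical order, equivalently that $n < m \iff \rep(n) <_{\text{gen}} \rep(m)$. Write $\rep(n) = \ell_0 \cdots \ell_N$ and $\rep(m) = \ell'_0 \cdots \ell'_{N'}$. When $N < N'$, the sequence $(|\sigma^i(\alpha_0)|)_i$ is strictly increasing by prolongability, word-greediness yields $n < |\sigma^{N+1}(\alpha_0)|$, and since $p'_0$ starts with $\alpha_0$ we get $m \ge |\sigma^{N'}(p'_0)| \ge |\sigma^{N'}(\alpha_0)| \ge |\sigma^{N+1}(\alpha_0)| > n$. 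When $N = N'$ and $j$ is the first differing index (with $\ell_j < \ell'_j$), then $\beta_i = \beta'_i$ for $i \le j$ and $p_i = p'_i$ for $i < j$; setting $S_j = \sum_{i<j}|\sigma^{N-i}(p_i)|$, the word-greediness of $\rep(n)$ at index $j$ gives
\[
n < S_j + |\sigma^{N-j}(p_j)| + |\sigma^{N-j}(\beta_{j+1})|,
\]
while the letters by which $p'_j$ extends $p_j$ in $\sigma(\beta_j)$ begin with $\beta_{j+1}$, so $|\sigma^{N-j}(p'_j)| \ge |\sigma^{N-j}(p_j)| + |\sigma^{N-j}(\beta_{j+1})|$ and hence $m \ge S_j + |\sigma^{N-j}(p'_j)| > n$. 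The main obstacle is precisely this same-length step: one must convert a single lexicographic gap at position $j$ into a strict numerical one, and the cancellation only goes through because the quantity $|\sigma^{N-j}(\beta_{j+1})|$ bounding the slack of $n$ under word-greediness is simultaneously a lower bound on the length gained when $p_j$ is enlarged to $p'_j$. Everything else reduces to careful bookkeeping with prefix factorizations and the prolongability of $\sigma$ at $\alpha_0$.
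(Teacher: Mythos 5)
The paper recalls this result from Rigo~\cite[Section 2.2]{Rigo2014-vol2} without supplying a proof, so there is no internal argument to compare your attempt against; what follows assesses the reconstruction on its own. It is correct and follows what one would expect to be the standard route: the Dumont--Thomas factorization $\infw{x}[0,n) = \sigma^N(p_0)\sigma^{N-1}(p_1)\cdots\sigma^0(p_N)$ is read off as an accepting path of $\cA_\sigma$ through $\ell_i = |p_i|$ and $\delta(\beta_i,\ell_i) = \beta_{i+1}$, and conversely an accepting path is unfolded into a word-greedy prefix factorization. Two small points are worth making explicit if this sketch were to be written out. First, the phrase ``satisfies the Dumont--Thomas conditions by design'' in the reverse inclusion suppresses a step: word-greediness of the constructed factorization needs the telescoping estimate $|\sigma^{N-i}(\beta_{i+1})| > \sum_{j>i}|\sigma^{N-j}(p_j)|$, obtained by repeatedly writing $\sigma(\beta_i)=p_i\,\beta_{i+1}\,s_i$ and noting $|\beta_{N+1}|=1$. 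Second, in the same-length comparison the strict inequality that finishes the argument relies on $\sigma^{N-j}(\beta_{j+1})\neq\eps$; this does not need to be assumed separately, since it is already forced by word-greediness (if it were empty, the forbidden extension $\sigma^N(p_0)\cdots\sigma^{N-j}(p_j\beta_{j+1})$ would have length at most $n$ and hence be a prefix of $\infw{x}[0,n)$, a contradiction). With those observations the sketch closes up without further assumptions, in particular without invoking that $\sigma$ is non-erasing.
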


\begin{example}
\label{Ex:c102 - automaton}
For $c = 102$, the automaton $\cA_{\mu_c}$ of Theorem~\ref{T:automata construction of numeration systems} is depicted in Figure~\ref{fig:c102 - automaton} (details are left to the reader). The first few accepted words (not starting with $0$) are, in genealogical order, $\eps$, $1$, $10$, $100$, $101$, $1000$, $1001$, $1010$, and $1011$, which indeed agree with the representations of the first few integers in Example~\ref{Ex:c102 - numeration system}.
\end{example}

\begin{figure}[h!tbp]
  \begin{center}
\begin{tikzpicture}[scale=.8,every node/.style={circle,minimum width=.8cm,inner sep=1.5pt}]
\node (0) at (0,0) [draw,circle] {$0$};
\node (1) at (3,0) [draw,circle] {$1$};
\node (2) at (6,0) [draw,circle] {$2$};
\draw [->] (0) edge[out=110,in=70,looseness=8] node[above=-.2] {$0$} (0);
\draw [->] (0) edge node[above=-.2,midway] {$1$} (1);
\draw [->] (1) edge node[above=-.2,midway] {0} (2);
\draw [->] (2) edge[out=225,in=-45] node[below=-.1] {0, 1} (0);
 \draw [->] (-1,0) -- (0);
\end{tikzpicture}
\caption{The automaton $\cA_{\mu_c}$ for $c = 102$.}
\label{fig:c102 - automaton}
\end{center}
\end{figure}

As the automaton in Theorem~\ref{T:automata construction of numeration systems} can be used to produce, for all $n\geq0$, the letter $\infw{u}_n$ when reading $\rep_{\cS_c}(n)$ by~\cite[Theorem 2.24]{Rigo2014-vol2}, we have the following.

\begin{corollary}\label{C:automatic sequences}
Let $c$ satisfy~\eqref{eq:WH}.
Then the sequence $\infw{u}$ is $\cS_c$-automatic.
\end{corollary}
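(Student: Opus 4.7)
The plan is to observe that Corollary~\ref{C:automatic sequences} is essentially a direct unpacking of Theorem~\ref{T:automata construction of numeration systems} together with the cited result~\cite[Theorem 2.24]{Rigo2014-vol2}, so my job is mainly to assemble the pieces and make the output function explicit. Recall that $\infw{u}$ is $\cS_c$-automatic if there is a deterministic finite automaton with output (DFAO) which, upon reading $\rep_{\cS_c}(n)$ from its initial state, outputs the letter $\infw{u}_n$. So I would turn the automaton $\cA_{\mu_c}$ built in Theorem~\ref{T:automata construction of numeration systems} into a DFAO by equipping it with the identity output map $\tau\colon \{0,\ldots,k-1\}\to\{0,\ldots,k-1\}$, $i\mapsto i$, which simply labels each state with its own name viewed as a letter of the alphabet of $\infw{u}$.

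Then the claim reduces to showing that, for every $n\geq 0$, the run of $\cA_{\mu_c}$ on input $\rep_{\cS_c}(n)$ starting from the initial state $0$ ends in the state $\infw{u}_n$. This is precisely the content of~\cite[Theorem 2.24]{Rigo2014-vol2} specialized to our setting: given a prolongable morphism $\sigma$ and the Dumont-Thomas numeration system it induces, the state reached in $\cA_\sigma$ after reading $\rep(n)$ coincides with the $n$th letter of the fixed point. I would therefore first check that the hypotheses of the cited theorem are met, which is immediate from~\eqref{eq:WH}: $\mu_c$ is prolongable on $0$ (because $c_0\geq 1$) and non-erasing (because $c_{k-1}\geq 1$), so it admits the infinite fixed point $\infw{u}$ and the associated Dumont-Thomas numeration system is exactly $\cS_c$ by Proposition~\ref{P:Dumont-Thomas}.

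For the reader who would like to see why the cited theorem applies, I would briefly sketch the underlying induction on the length of $\rep_{\cS_c}(n)$: reading the first digit $\ell_0$ from state $0$ walks along the prefix $\word{N}^{\ell_0}$ of $\infw{u}$ and arrives at the state labelled by the $(\ell_0\length{N})$th letter of $\mu_c^{N+1}(0)$, and then each subsequent digit $\ell_i$ follows the appropriate transition in $\cA_{\mu_c}$, which by definition of $\delta$ corresponds to descending one level of the morphic hierarchy into $\mu_c^{N-i}$. Combined with the factorization $\infw{u}[0,n)=\word{N}^{\ell_0}\cdots\word{0}^{\ell_N}$ from Proposition~\ref{P:Dumont-Thomas}, this shows that the terminal state is exactly the first letter of $\mu_c(j)$, where $j$ is the state reached after reading $\ell_0\cdots\ell_{N-1}$, which equals $\infw{u}_n$.

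There is no real obstacle here: the argument is a bookkeeping exercise to align the notation of~\cite{Rigo2014-vol2} with our $\mu_c$ and $\cS_c$. The only care needed is to confirm that the exclusion of representations starting with $0$ (already noted in Theorem~\ref{T:automata construction of numeration systems}) matches our convention $\rep_{\cS_c}(0)=\eps$, so that the $(n+1)$st word of $L(\cA_{\mu_c})\setminus 0\N^*$ in genealogical order is indeed $\rep_{\cS_c}(n)$ for every $n\geq 0$. Once this compatibility is stated, the conclusion is immediate.
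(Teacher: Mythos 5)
Your proposal is correct and takes essentially the same route as the paper: it equips the automaton $\cA_{\mu_c}$ from Theorem~\ref{T:automata construction of numeration systems} with the identity output map and invokes~\cite[Theorem 2.24]{Rigo2014-vol2} to conclude that reading $\rep_{\cS_c}(n)$ produces $\infw{u}_n$. The extra induction sketch and the remark about the $\rep_{\cS_c}(0)=\eps$ convention are just elaborations of the same one-line argument the paper gives.
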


Similarly to what is usually done in real base numeration systems, we will let  $\infw{d}^\star$ denote the periodization of $c$, that is, $\infw{d}^\star=(c_0 \cdots c_{k-2} (c_{k-1}-1))^\omega$.
Using Theorem~\ref{T:automata construction of numeration systems}, we deduce the next result.

\begin{lemma}
\label{L:decomposition of T_n+1 - 1}
Under~\eqref{eq:WH}, for all $n \geq 0$, we have $\rep_{\cS_c}(\length{n}) = 10^n$, the numbers having a representation of length $n+1$ are those in $[\length{n}, \length{n+1})$, and
$\rep_{\cS_c}(\length{n+1}-1) = \infw{d}^\star[0,n]$.
In particular, $\length{n+1}-1 = \sum_{i = 0}^n \infw{d}^\star_i \length{n-i}$.
\end{lemma}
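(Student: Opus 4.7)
The plan is to establish the three stated equalities in order and then read off the sum by applying the valuation map.

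\textbf{Step 1.} For $\rep_{\cS_c}(\length{n})=10^n$: since $\infw{u}[0,\length{n}) = \word{n}$, I will verify that the trivial factorization $\word{n} = \word{n}^1$ fulfills the word-greedy conditions of Proposition~\ref{P:Dumont-Thomas}. The leading exponent equals $1 \geq 1$; the word $\word{n+1}$ is too long to be a prefix of $\word{n}$; and for every $i \geq 0$ the candidate $\word{n}\cdot\word{n-i}$ obtained by raising any later exponent by one has length $\length{n}+\length{n-i}>\length{n}$ and is likewise too long to be a prefix of $\word{n}$. Uniqueness in Proposition~\ref{P:Dumont-Thomas} then forces $(\ell_0,\ldots,\ell_n)=(1,0,\ldots,0)$, i.e., $\rep_{\cS_c}(\length{n})=10^n$.

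\textbf{Step 2.} Step~1 implies that $\length{n}$ has a representation of length $n+1$ while $\length{n+1}$ has one of length $n+2$. By Theorem~\ref{T:automata construction of numeration systems}, $\cS_c$ is abstract, so $m\mapsto \rep_{\cS_c}(m)$ enumerates $L(\cA_{\mu_c})\setminus 0\N^*$ in genealogical order, which prioritizes length. Hence the integers with a length-$(n+1)$ representation are precisely those in $[\length{n},\length{n+1})$.

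\textbf{Step 3.} By Step~2, $\rep_{\cS_c}(\length{n+1}-1)$ is the genealogically largest length-$(n+1)$ word in $L(\cA_{\mu_c})\setminus 0\N^*$. Reading off the transitions of Theorem~\ref{T:automata construction of numeration systems}, from state $i$ with $0\leq i\leq k-2$ the largest outgoing label is $c_i$ (moving to state $i+1$), and from state $k-1$ the largest outgoing label is $c_{k-1}-1$ (moving back to state $0$); note that when some $c_i=0$ this is the unique outgoing label, so the greedy choice is still well-defined. Since the resulting greedy path cycles $0\to 1\to\cdots\to k-1\to 0\to\cdots$ without ever getting stuck, a standard DFA-maximization argument---any competing accepted length-$(n+1)$ path must differ from it at some first position where it uses a strictly smaller label, hence is genealogically smaller---shows it realizes the genealogically largest accepted length-$(n+1)$ word. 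Assumption~\eqref{eq:WH} guarantees $c_0\geq 1$, so this word does not start with $0$. Its label sequence is precisely $(c_0 c_1\cdots c_{k-2}(c_{k-1}-1))^{\omega}$ truncated to length $n+1$, namely $\infw{d}^\star[0,n]$. Finally, applying $\val_{\cS_c}$ to both sides and invoking Remark~\ref{R:valuation of numeration systems} yields $\length{n+1}-1=\sum_{i=0}^n \infw{d}^\star_i \length{n-i}$.

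The only delicate point is the greedy-traversal argument in Step~3: one must justify that always picking the largest outgoing label yields the genealogically maximal accepted word of the prescribed length (relying on the greedy path never dying) and explain the terminal digit $c_{k-1}-1$---since $\mu_c(k-1)=0^{c_{k-1}}$, state $k-1$ has no outgoing edge labeled $c_{k-1}$, so the greedy path is forced back to state $0$ via the label $c_{k-1}-1$.
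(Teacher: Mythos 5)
Your proof is correct and follows essentially the same route as the paper's (which is far terser): the paper likewise derives the first claim from the definition of $\cS_c$ via the word-greedy factorization, the second from genealogical order, and the third by identifying $\rep_{\cS_c}(\length{n+1}-1)$ with the genealogically largest length-$(n+1)$ word accepted by $\cA_{\mu_c}$, namely the length-$(n+1)$ prefix of $\infw{d}^\star$. You simply fill in the details the paper leaves implicit, most usefully the automaton-traversal argument in Step~3 showing that locally greedy label choices along the cycle $0\to1\to\cdots\to k-1\to0$ yield the genealogically maximal accepted word.
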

\begin{proof}
The first claim directly follows by the definition of $\cS_c$, and the second one by the genealogical order.
The number $\length{n+1}-1$ is then represented by the maximal length-$(n+1)$ word accepted by the automaton $\cA_{\mu_c}$, which is the length-$(n+1)$ prefix of $\infw{d}^\star$.
\end{proof}

Note that, if the numeration system $\cS_c$ satisfies the greedy condition, this result follows from the characterization of numeration systems in terms of dynamical systems given by Bertrand-Mathis~\cite{Bertrand89,CCSDLT22}.
However, even though the function $\rep_{\cS_c}$ is obtained using the word-greedy factorization of prefixes of $\infw{u}$, the numeration system $\cS_c$ is not necessarily greedy as the following example shows.

\begin{example}
In Example~\ref{ex:c102 - words and lengths} for $c = 102$, we see that $\infw{u}[0,14) = 012000101 \cdot 012 \cdot 01$, so $\rep_{\cS_c}(14) = 10110$, while the greedy representation of $14$ associated with the sequence $(U_n)_{n \geq 0}$ is $11000$.
\end{example}

In fact, we have the following two characterizations.

\begin{lemma}
\label{L:greedyIFFlanguage}
Let $c$ satisfy~\eqref{eq:WH}. The numeration system $\cS_c = (A, \rep_{\cS_c}, L)$ is greedy if and only if,
for all $v \in L$ and for all $i \leq |v|$, the suffix of length $i$ of $v$ is smaller than or equal to $\infw{d}^\star[0,i)$.
Moreover, we then have
\[
    L = \{v=v_1 \cdots v_n \in \N^* \setminus 0 \N^* : \forall \, 1 \leq i \leq n, v_{n-i+1} \cdots v_n \leq \infw{d}^\star[0,i)\}.
\]
\end{lemma}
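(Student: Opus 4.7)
The plan is to prove this equivalence by translating between the value inequality $\val_{\cS_c}(\cdot) < \length{i}$ and the lexicographic comparison with $\infw{d}^\star[0,i)$, relying on Lemma~\ref{L:decomposition of T_n+1 - 1}, which identifies $\infw{d}^\star[0,i)$ with $\rep_{\cS_c}(\length{i} - 1)$. First I would recall the standard reformulation of greediness for a positional numeration system built from an increasing sequence $(\length{n})_{n\ge 0}$: $\cS_c$ is greedy if and only if every $v = v_1 \cdots v_n \in L$ satisfies $\val_{\cS_c}(v_{n-i+1} \cdots v_n) < \length{i}$ for each $1 \leq i \leq n$. Combined with Lemma~\ref{L:decomposition of T_n+1 - 1}, the whole equivalence therefore reduces to the following question: when is a suffix of value at most $\length{i} - 1$ equivalent to a suffix that is lexicographically at most $\infw{d}^\star[0,i)$?

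The main computational tool will be the following claim, proved by induction on $i$: for any word $w \in \N^i$ whose length-$k$ suffix is lexicographically at most $\infw{d}^\star[0,k)$ for every $k \leq i$, we have $\val_{\cS_c}(w) \leq \length{i} - 1$. In the inductive step, if $w = \infw{d}^\star[0,i)$ the bound is immediate from Lemma~\ref{L:decomposition of T_n+1 - 1}; otherwise I isolate the first position $j$ at which $w_j < \infw{d}^\star_{j-1}$, apply the induction hypothesis to the suffix $w_{j+1}\cdots w_i$, and combine these with the identity $\length{i} - 1 = \sum_{k=1}^i \infw{d}^\star_{k-1} \length{i-k}$ to conclude.

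One implication of the lemma is then immediate: if every $v \in L$ satisfies the lexicographic suffix condition, the computational claim shows that every suffix of $v$ has value strictly less than the corresponding $\length{i}$, so the greedy algorithm applied to $\val_{\cS_c}(v)$ recovers the digits of $v$ one by one, proving that $\cS_c$ is greedy. For the converse, assume $\cS_c$ is greedy and suppose, for contradiction, that some $v \in L$ has a length-$i$ suffix $s$ with $s > \infw{d}^\star[0,i)$ lexicographically; let $j$ be the first position of strict excess. Bounding $\val_{\cS_c}(s)$ from below using $s_j \geq \infw{d}^\star_{j-1} + 1$, and using the fact that greediness applied to $\infw{d}^\star[0,i) = \rep_{\cS_c}(\length{i}-1) \in L$ forces $\val_{\cS_c}(\infw{d}^\star[j,i)) \leq \length{i-j} - 1$, I expect to obtain $\val_{\cS_c}(s) \geq \length{i}$, contradicting the greediness of the representation $v$.

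For the ``moreover'' part, the inclusion $L \subseteq L^\star$ (where $L^\star$ denotes the right-hand side of the displayed formula) is precisely the direction just proved. The reverse inclusion $L^\star \subseteq L$ follows by one more application of the computational claim: given $v \in L^\star$ of length $n$, the claim yields $\val_{\cS_c}(v) \in [\length{n-1}, \length{n})$, and inspecting the greedy digits of $\val_{\cS_c}(v)$ one at a time shows that each is forced to equal the corresponding letter of $v$, so $v = \rep_{\cS_c}(\val_{\cS_c}(v)) \in L$. The main obstacle will be the bookkeeping in the contradiction step, where the Dumont--Thomas expansion of $\length{i} - 1$ and the greediness bound on the suffix $\infw{d}^\star[j,i)$ must be combined in just the right way to extract the crucial inequality $\val_{\cS_c}(s) \geq \length{i}$.
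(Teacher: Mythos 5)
Your proof is mathematically sound, but it takes a noticeably different route from the paper. The paper's proof leans on existing machinery: it introduces the canonical greedy system $\cS$ for $(\length{n})_{n\ge 0}$, uses the fact that both $\rep_{\cS_c}$ and $\rep_{\cS}$ preserve genealogical order (the former because $\cS_c$ is abstract, the latter via a cited result in CANT), reduces the equivalence to the language equality $L=L'$ and then to the inclusion $L\subseteq L'$ by a counting argument, cites Hollander's Lemma 5.3 as a black box for the lexicographic characterization of $L'$, and handles one direction of the comparison by noting that $\infw{d}^\star[0,i)$ is a representation of $\length{i}-1$ so the greedy representation must dominate it. You instead rebuild the analogue of Hollander's lemma from scratch: your inductive ``computational claim'' is precisely the statement that suffix-wise lexicographic domination by $\infw{d}^\star$ implies the valuation bound $\val(w)\le \length{|w|}-1$, and the contradiction argument in the converse direction amounts to showing that a single lexicographic excess at the first differing position, combined with the suffix value bound $\val(\infw{d}^\star[j,i))\le \length{i-j}-1$ (which you correctly extract from greediness applied to $\rep_{\cS_c}(\length{i}-1)=\infw{d}^\star[0,i)$ via Lemma~\ref{L:decomposition of T_n+1 - 1}), pushes the valuation up to $\length{i}$. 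What your approach buys is self-containedness and transparency of the underlying Euclidean/greedy mechanism; what it costs is length and some delicate index bookkeeping, whereas the paper's version is short precisely because it delegates all of this to Hollander and to the genealogical-order framework of abstract numeration systems. One small point to make explicit in a final writeup: your computational claim uses the positional valuation $\val(w)=\sum_j w_j\length{|w|-j}$ for arbitrary $w\in\N^*$, which the paper only records for words in $L$ (Remark~\ref{R:valuation of numeration systems}); the extension is immediate but should be stated.
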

\begin{proof}
Let us denote $\cS = (A', \rep_\cS, L')$ the canonical greedy numeration system associated with the sequence $(U_n)_{n \geq 0}$. In particular, by uniqueness, $\cS_c$ is greedy if and only if $\cS_c = \cS$.
As $\cS_c$ is an abstract numeration system, $\rep_{\cS_c}$ respects the genealogical order, i.e., $n \leq m$ if and only if $\rep_{\cS_c}(n) \leq_{\text{gen}} \rep_{\cS_c}(m)$. So does $\rep_{\cS}$ by~\cite[Proposition 2.3.45]{CANT2010}. Hence, $\cS_c = \cS$ if and only if $L = L'$.
Moreover, for all $n \geq0$, $\rep_{\cS}(U_n) = 10^n$, so $L$ and $L'$ contain the same number of length-$n$ words by Lemma~\ref{L:decomposition of T_n+1 - 1}. Thus $L = L'$ if and only if $L \subseteq L'$.
By~\cite[Lemma 5.3]{Hollander}, we have
\[
    L' = \{v = v_1 \cdots v_n \in \N^* \setminus 0 \N^* : \forall \, 1 \leq i \leq n, v_{n-i+1} \cdots v_n \leq \rep_\cS(U_i - 1)\},
\]
so if $\cS_c$ is greedy, then $L = L'$ and, by Lemma~\ref{L:decomposition of T_n+1 - 1}, $\rep_\cS(U_i - 1) = \infw{d}^\star[0,i)$ so we conclude.
For the converse, let us proceed by contraposition and assume that $\cS_c$ is not greedy. Therefore, $L \not \subseteq L'$ and there exists $v \in L$ and $i \leq |v|$ such that $v_{n-i+1} \cdots v_n > \rep_\cS(U_i - 1)$. However, since $\infw{d}^\star[0,i)$ is also a representation of $U_i - 1$ associated with the sequence $(U_n)_{n \geq 0}$, we have $\rep_\cS(U_i - 1) \geq \infw{d}^\star[0,i)$ (see~\cite[Proposition 2.3.44]{CANT2010} for example). Therefore, the length-$i$ suffix of $v$ is strictly greater than $\infw{d}^\star[0,i)$, which ends the proof.
\end{proof}

\begin{theorem}
\label{T:greedy condition}
Let $c = c_0\cdots c_{k-1} \geq 0^k$ with $c_0, c_{k-1} \geq 1$.
The numeration system $\cS_c$ is greedy if and only if $c_0 \cdots c_{k-2}(c_{k-1} - 1)$ is lexicographically maximal among its conjugates.
\end{theorem}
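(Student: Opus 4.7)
The plan is to combine Lemma~\ref{L:greedyIFFlanguage} with an automaton-based analysis of which length-$i$ words can occur as suffixes of elements of $L$, using the automaton $\cA_{\mu_c}$ from Theorem~\ref{T:automata construction of numeration systems}. Set $w = c_0 c_1 \cdots c_{k-2}(c_{k-1}-1)$, so that $|w| = k$ and $\infw{d}^\star = w^\omega$, and, for each $s \in \{0, \ldots, k-1\}$, denote by $w^{(s)} = w_s \cdots w_{k-1} w_0 \cdots w_{s-1}$ the $s$-th cyclic conjugate of $w$ (so $w^{(0)} = w$).

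The first step will be to identify the length-$i$ suffixes of elements of $L$. Every non-empty $v \in L$ corresponds to a path starting at state $0$ in $\cA_{\mu_c}$, so its length-$i$ suffix is the label of some length-$i$ sub-path starting at a state $s \in \{0, \ldots, k-1\}$. Conversely, any length-$i$ path label starting at an arbitrary state $s$ can be realized as such a suffix: for $s \geq 1$ by prepending the spine traversal $c_0 c_1 \cdots c_{s-1}$ from state $0$ to state $s$, and for $s = 0$ by prepending the full cycle $w$, which returns to state $0$. Under \eqref{eq:WH}, both prefixes start with $c_0 \geq 1$, so the resulting $v$ indeed lies in $L$.

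The second step will be to determine, for each starting state, the lexicographically largest length-$i$ path. From the definition of $\mu_c$, the largest outgoing letter at state $j \in \{0, \ldots, k-2\}$ is $c_j$ (advancing to state $j+1$), and at state $k-1$ it is $c_{k-1}-1$ (returning to state $0$). Greedy maximization from state $s$ therefore spells out exactly the infinite label $(w^{(s)})^\omega$, so the maximal length-$i$ suffix from state $s$ is its length-$i$ prefix. Plugging this into Lemma~\ref{L:greedyIFFlanguage} then yields: $\cS_c$ is greedy if and only if $(w^{(s)})^\omega \leq w^\omega$ for every $s \in \{0, \ldots, k-1\}$.

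To conclude, I will invoke the elementary observation that, for two length-$k$ words $u, u'$, one has $u^\omega \leq u'^\omega$ if and only if $u \leq u'$: either $u = u'$, or the first index of disagreement lies in $\{0, \ldots, k-1\}$ and decides both comparisons identically. Applied to each conjugate $w^{(s)}$, this collapses the condition to $w^{(s)} \leq w$ for all $s$, which is exactly the statement that $w = c_0 \cdots c_{k-2}(c_{k-1}-1)$ is lexicographically maximal among its conjugates. I anticipate the main technical hurdle to be the suffix-path correspondence in the first step --- in particular, ensuring that every length-$i$ path from every state can be realized as a suffix of some $v \in L$ without introducing a leading zero --- after which everything else is short bookkeeping.
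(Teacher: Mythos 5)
Your proof is correct and follows essentially the same route as the paper's: both invoke Lemma~\ref{L:greedyIFFlanguage} together with the automaton $\cA_{\mu_c}$ from Theorem~\ref{T:automata construction of numeration systems}, identify length-$i$ suffixes of $L$-words with length-$i$ paths from arbitrary states, compute the greedy-maximal path from state $s$ as a prefix of $(c_s \cdots c_{k-2}(c_{k-1}-1)c_0\cdots c_{s-1})^\omega$, and conclude by comparing conjugates. The one place you add detail the paper leaves implicit is the justification that every such path label actually occurs as a suffix of some $v \in L$ (your spine-traversal and full-cycle prepending), which is a welcome clarification but not a different method.
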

\begin{proof}
Using Lemma~\ref{L:greedyIFFlanguage} and Theorem~\ref{T:automata construction of numeration systems}, $\cS_c$ is greedy if and only if, for all $n \geq 0$ and for all $0 \leq i \leq k-1$, any path $\ell_0 \cdots \ell_n$ starting in state $i$ in the automaton $\cA_{\mu_c}$ is such that $\ell_0 \cdots \ell_n \leq \infw{d}^\star[0,n]$.
However, by definition of $\cA_{\mu_c}$, the lexicographically biggest path of length $n$ starting in state $i$ is given by the prefix of length $n$ of $\left(c_i \cdots c_{k-2} (c_{k-1} - 1) c_0 \cdots c_{i-1}\right)^\omega$.
Therefore, we can conclude that $\cS_c$ is greedy if and only if $c_i \cdots c_{k-2} (c_{k-1} - 1) c_0 \cdots c_{i-1} \leq c_0 \cdots c_{k-2}(c_{k-1} - 1)$ for all $0 \leq i \leq k-1$, i.e., $c_0 \cdots c_{k-2}(c_{k-1} - 1)$ is maximal among its conjugates.
\end{proof}

\begin{example}
\label{Ex: two NS for Fibonacci}
Let $k=4$ and $c=1011$.
The sequence $\length{n}$ satisfies the recurrence relation $\length{n+4} = \length{n+3} + \length{n+1} + \length{n}$ with initial conditions $\length{0}=1$, $\length{1}=2$, $\length{2}=3$, and $\length{3}=5$. 
A simple induction shows that $(\length{n})_{n \geq 0}$ is in fact the sequence of Fibonacci numbers.
As $c_0 c_1 c_2(c_3 - 1) = 1010$ is maximal among its conjugates, the numeration system $\cS_c$ then corresponds to the classical Fibonacci numeration system, which can also be obtained with the parameter $c'=11$. In this case, $c_0 c_1 c_2(c_3 - 1) = 1010 = v^2$ with $v=10 = c'_0(c'_1 - 1)$, which is anti-Lyndon (see Example~\ref{Ex: anti-Lyndon}). 
\end{example}

The observation made in the previous example is more general, as we show below.
Recall that a real number $\beta$ is \emph{Parry} if the $\beta$-expansion $d_\beta(1)$ of $1$ is eventually periodic.
It is \emph{simple Parry} if $d_\beta(1)$ is finite, i.e., $d_\beta(1)$ is of the form $u0^\omega$ for some finite word $u$.
See~\cite[Chapter 2]{CANT2010} for more details.

\begin{proposition}
\label{pro: link with simple Parry}
    Let $c$ satisfy~\eqref{eq:WH} and be such that $c_0 \cdots c_{k-2}(c_{k-1} - 1)$ is maximal among its conjugates.
    \begin{enumerate}
        \item We have $c_0 \cdots c_{k-2}(c_{k-1} - 1) = (c'_0 \cdots c'_{j-2} (c'_{j-1} - 1))^\ell$ where $c'_0 \cdots c'_{j-2} (c'_{j-1} - 1)$ is an anti-Lyndon word.
        \item There exists a simple Parry number $\beta$ such that $d_\beta(1) = c'_0 \cdots c'_{j-1} 0^\omega$.
        \item We have $\cS_c = \cS_{c'}$.
        \item If $\sigma\colon i \mapsto i \bmod j$ for all $0 \leq i \leq k-1$, then $\infw{u}_{c'} = \sigma(\infw{u}_c)$.
    \end{enumerate}
\end{proposition}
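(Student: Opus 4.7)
The plan is to prove the four claims in sequence, leveraging a bookkeeping relation between the coefficients $c_i$ and $c'_{i\bmod j}$ and the map $\sigma\colon i \mapsto i \bmod j$.

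For claim (1), set $w = c_0 \cdots c_{k-2}(c_{k-1}-1)$. If $v$ is its primitive root, so $w = v^\ell$ with $|v| = j$, then the cyclic conjugates of $w$ are exactly the words $v_s^\ell$ as $v_s$ ranges over the (distinct, by primitivity) cyclic conjugates of $v$. Raising to the fixed power $\ell$ preserves the lex order on equal-length words, so $w$ is maximal among its conjugates if and only if $v$ is. Since the last letter of $v$ equals that of $w$, namely $c_{k-1}-1 \geq 0$, one can write $v = c'_0 \cdots c'_{j-2}(c'_{j-1}-1)$ with $c'_{j-1}\geq 1$; moreover $c'_0 = c_0 \geq 1$, so $c'$ satisfies~\eqref{eq:WH} and $v$ is anti-Lyndon by construction.

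For claim (2), I would appeal to Parry's admissibility theorem. The shifts of $v^\omega$ are exactly the words $v_r^\omega$ for $v_r$ a cyclic conjugate of $v$, so the anti-Lyndon property of $v$ yields that every shift of $v^\omega$ is lexicographically at most $v^\omega$ (with equality precisely when the shift index is a multiple of $j$, by primitivity). Combined with $c'_0 \geq 1$, this is the admissibility condition characterizing the quasi-greedy $\beta$-expansion of $1$ for a simple Parry number, so such a $\beta>1$ exists with $d_\beta(1) = c'_0 \cdots c'_{j-1}$ and quasi-greedy extension $v^\omega$.

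For claims (3) and (4), the key is the operator identity $\sigma \circ \mu_c = \mu_{c'} \circ \sigma$ on $\{0,\ldots,k-1\}$, which I would verify in three cases: (i) $i \bmod j \neq j-1$ and $i < k-1$, where $c_i = c'_{i\bmod j}$ and both sides yield $0^{c'_{i\bmod j}}(i\bmod j + 1)$; (ii) $i \bmod j = j-1$ and $i < k-1$, where $c_i = c'_{j-1}-1$ and the trailing letter $i+1$ collapses under $\sigma$ to $0$, producing $0^{c'_{j-1}}$ on both sides; (iii) $i = k-1$, where $c_{k-1} = c'_{j-1}$ and both sides are $0^{c'_{j-1}}$. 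A straightforward induction then gives $\sigma(\mu_c^n(0)) = \mu_{c'}^n(0)$ for every $n \geq 0$, and passing to the limit yields~(4). For~(3), the same case analysis shows $|\mu_c(i)| = |\mu_{c'}(\sigma(i))|$ for every $i$, so $\sigma$ is a surjective morphism of deterministic automata from $\cA_{\mu_c}$ onto $\cA_{\mu_{c'}}$ preserving the initial state and mapping final states to final states (every state is final). Paths from $0$ therefore lift and project between the two automata with the same labels, so the accepted languages coincide and, by Theorem~\ref{T:automata construction of numeration systems}, $\cS_c = \cS_{c'}$.

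I expect the main subtlety to lie in claim~(2): one must invoke Parry's criterion in its exact form for the simple Parry regime, being careful about the non-strict inequality at shifts that are multiples of $j$ and about the convention that $d_\beta(1) = c'_0 \cdots c'_{j-1}$ denotes the finite word whose quasi-greedy extension is precisely $v^\omega$. The remaining arguments are essentially bookkeeping once the three-case decomposition driven by $i \bmod j$ is set up.
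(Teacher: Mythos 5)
Your proof is correct, and for claims (1), (2), and (4) it substantively matches the paper: the same primitive-root/conjugate argument for~(1); Parry's admissibility criterion for~(2), which you phrase in the quasi-greedy infinite form (all shifts of $v^\omega$ are lexicographically at most $v^\omega$), whereas the paper uses the dual finite-word inequality $c'_{j-i}\cdots c'_{j-1}\leq c'_0\cdots c'_{i-1}<c'$ to invoke a corollary of Parry; and the intertwining identity $\sigma\circ\mu_c=\mu_{c'}\circ\sigma$ for~(4). The genuine departure is claim~(3). The paper feeds both $c$ and $c'$ through Theorem~\ref{T:greedy condition} to show that both systems are greedy, observes $\infw{d}^\star_c=v^\omega=\infw{d}^\star_{c'}$, and then invokes Lemma~\ref{L:greedyIFFlanguage}, which says the numeration language of a greedy system is determined by $\infw{d}^\star$; hence $\cS_c=\cS_{c'}$. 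You instead argue directly on the automata: $\sigma$ is a surjective morphism $\cA_{\mu_c}\to\cA_{\mu_{c'}}$ sending initial state to initial state, and the additional fact $|\mu_c(i)|=|\mu_{c'}(\sigma(i))|$ gives a label-preserving bijection between out-transitions at $i$ and at $\sigma(i)$, so paths from the initial state both project and lift with identical labels, yielding $L(\cA_{\mu_c})=L(\cA_{\mu_{c'}})$ and then $\cS_c=\cS_{c'}$ by Theorem~\ref{T:automata construction of numeration systems}. This is valid and in some ways cleaner — it bypasses the greediness machinery of Section~3 entirely and folds~(3) and~(4) around a single intertwining computation. The one point worth making explicit in your write-up is that a bare automaton morphism only delivers the inclusion $L(\cA_{\mu_c})\subseteq L(\cA_{\mu_{c'}})$; the path-lifting direction, hence the reverse inclusion, is exactly where the length equality $|\mu_c(i)|=|\mu_{c'}(\sigma(i))|$ is indispensable.
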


\begin{proof}
The first item directly follows from the definition of anti-Lyndon words.

Let us turn to the proof of the second item.
As $c'_0 \cdots c'_{j-2}(c'_{j-1}-1)$ is anti-Lyndon by assumption, $c'_0 \cdots c'_{i-1} > c'_{j-i} \cdots c'_{j-2}(c'_{j-1}-1)$ for all $i\in\{1,\ldots,j-1\}$. Therefore, we obtain $c' > c'_0 \cdots c'_{i-1} \geq c'_{j-i} \cdots c'_{j-1}$. By a result of Parry~\cite[Corollary 4]{Parry}, this then implies that there exists some simple Parry number $\beta$ such that $c'0^\omega = d_\beta(1)$.

Let us turn to the proof of the third item.
Write $v=c'_0 \cdots c'_{j-2} (c'_{j-1} - 1)$ (simply put, to get $c'$, we add $1$ to the last letter of $v$).
By the first item, $c = v^{\ell-1}c'$ is a ``partial'' cyclization of $c'$.
In particular, by definition, we obtain $\infw{d}^\star_c = \infw{d}^\star_{v'}$ (where the dependence of $\infw{d}^\star$ on the chosen parameters is emphasized via a subscript).
The numeration systems $\cS_c$ and $\cS_{c'}$ thus coincide by Theorem~\ref{T:greedy condition} and Lemma~\ref{L:greedyIFFlanguage}.

Finally, let us show the fourth item. For all $i \not \equiv j-1 \bmod j$, we have
\[
\mu_{c'} \circ \sigma(i) = 0^{c'_{i \bmod j}} ((i \bmod j) + 1) = 0^{c_i} (i+1 \bmod j) = \sigma \circ \mu_c(i).
\]
Similarly, if $i \equiv j-1 \bmod j$, then
\begin{align*}
    \mu_{c'} \circ \sigma(i) = 0^{c'_{j-1}} 
    &=
    \begin{cases}
        0^{c_i + 1}, & \text{if } i \ne k-1; \\
        0^{c_{k-1}}, & \text{if } i = k-1;
    \end{cases} \\
    &= \sigma \circ \mu_c(i).
\end{align*}
This shows that $\mu_{c'}\circ \sigma = \sigma \circ \mu_c$. By induction, we can then show that $\mu^n_{c'}(0) = \sigma(\mu^n_c(0))$ for all $n\ge 0$. Hence, $\infw{u}_{c'} = \sigma(\infw{u}_c)$.
\end{proof}

Combined with Corollary~\ref{C:automatic sequences}, this result implies that, if $c_0 \cdots c_{k-2}(c_{k-1} - 1)$ is maximal among its conjugates, the word $\infw{u}$ is simple-Parry automatic in the sense that it is automatic for the integer numeration system classically associated with a simple Parry number.

\begin{example}
We illustrate Proposition~\ref{pro: link with simple Parry} by resuming Example~\ref{Ex: two NS for Fibonacci}.
We have $c_0c_1c_2(c_3-1)=1010=v^2$ with $v=10$ and $c'=11$.
The corresponding simple Parry number is the Golden ratio $\varphi$.
Moreover, if $\sigma\colon i \mapsto i \bmod 2$, then 
\[
    \sigma(\mu_{c}^\omega(0)) = \sigma(0120301001201 \cdots) = 0100101001001 \cdots
\]
is the Fibonacci word.
\end{example}

\section{String attractors of the prefixes}\label{sec: SA}

We now turn to the concept of string attractors in relation to the fixed points of the morphisms $\mu_c$, $c \geq 0^k$.
A \emph{string attractor} of a finite word $y = y_1 \cdots y_n$ is a set $\Gamma \subseteq \{1, \ldots, n\}$ such that every non-empty factor of $y$ has an occurrence crossing a position in $\Gamma$, i.e., for each factor $x \in A^m$ of $y$, there exists $i \in \Gamma$ and $j$ such that $i \in \{j, \ldots, j + m - 1\}$ and $x = y[j,j + m)$.

\begin{example}
The set $\{2,3,4\}$ is a string attractor of the word $0\,\underline{1}\,\underline{2}\,\underline{0}\,0\,1$. Indeed, it suffices to check that the factors $0$, $1$ and $01$ have an occurrence crossing one of the underlined positions. No smaller string attractor exists since at least one position in the set is needed per different letter in the word.
\end{example}

\noindent\textbf{Warning.}
We would like to stress the following crucial point: in this paper, the letters of infinite words are indexed starting from $0$ while the positions in a string attractor are counted starting at $1$.
This could be seen as confusing, but we use the same notation as the original paper on string attractors \cite{Kampa-Prezza}.
Where ambiguity may occur, we explicitly declare how finite words are indexed.

The family of words $\infw{u}_c$ contains the famous $k$-bonacci words (when $c = 1^k$), and it is known for these words that the positions in $\{U_n : n \geq 0\}$ are sufficient to find string attractors of minimal sizes~\cite{CGRRSS23}. It is thus natural to wonder if it is also the case for each word $\infw{u}_c$. We first obtain the following result.

\begin{proposition}\label{P:string attractor imply fractional powers}
Let $c$ satisfy~\eqref{eq:WH}. If every prefix of $\infw{u}$ has a string attractor made of elements of $\{\length{n} : n \geq 0\}$, then $\infw{u}[0,\length{n+1}-1)$ is a fractional power of $\word{n}$ for all $n \geq 0$.
\end{proposition}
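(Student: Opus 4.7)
My plan is to argue by contrapositive. Assume $p := \infw{u}[0, \length{n+1}-1)$ is not a fractional power of $\word{n}$, and show $p$ admits no string attractor contained in $\{\length{m} : m \geq 0\}$. Since $|p| = \length{n+1}-1 < \length{n+1}$, any hypothetical such attractor $\Gamma$ is automatically contained in $\{\length{0},\dots,\length{n}\} \subseteq [1,\length{n}]$, so any factor of $p$ whose only occurrence lies strictly past position $\length{n}$ cannot be caught by $\Gamma$.

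Set $s := \length{n+1}-1-\length{n}$ and compare the suffix $r := \infw{u}[\length{n}, \length{n+1}-1)$ with the length-$s$ prefix of $\word{n}^{\omega}$; under the contrapositive hypothesis, these two length-$s$ words differ. Let $j_0 \in [0,s)$ be the smallest index of disagreement and put $b := \infw{u}[\length{n}+j_0]$, so $b \neq \word{n}[j_0 \bmod \length{n}]$ while $\infw{u}[\length{n}, \length{n}+j_0) = \word{n}^{\omega}[0,j_0)$. The witness factor I propose is
\[
f := \infw{u}[\length{n}, \length{n}+j_0+1),
\]
of length $j_0+1$; its end-of-$p$ occurrence covers $[\length{n}+1,\length{n}+j_0+1]$, which is disjoint from $[1,\length{n}]$.

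The heart of the proof is the following \emph{Main Claim}: the factor $f$ has no other occurrence in $p$. An occurrence starting at a zero-indexed position $q \neq \length{n}$ would force $\infw{u}[q, q+j_0) = \word{n}^{\omega}[0,j_0)$ and $\infw{u}[q+j_0] = b$. For $q = 0$ this is impossible, since by minimality of $j_0$ one has $\infw{u}[j_0] = \word{n}[j_0 \bmod \length{n}]$ (directly when $j_0 < \length{n}$, and by unwinding minimality once when $j_0 \geq \length{n}$, using that $j_0 - \length{n} < j_0$), and $\word{n}[j_0 \bmod \length{n}] \neq b$ by construction. Any $q > \length{n}$ produces an occurrence entirely inside $(\length{n}, \length{n+1}-1]$, which again does not meet $\Gamma \subseteq [1,\length{n}]$. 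The delicate case is $q \in (0, \length{n})$: here the prefix $\word{n}^{\omega}[0,j_0)$ would have to reappear inside $\word{n}$ at a positive position and be followed there by the anomalous letter $b$.

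Ruling out this last case is the main obstacle, and I would attack it using the Dumont--Thomas factorization recalled in Section~\ref{sec: ANS} together with the recursive decomposition of $\word{n}$ given by Proposition~\ref{P:alternative definition of the words}. Concretely, any internal embedding of $\word{n}^{\omega}[0,j_0)$ in $\word{n}$ can be pushed back through $\mu_c$ to an embedding at a strictly lower level of the morphic hierarchy, and the minimality of $j_0$ propagates through this pushback to force the letter immediately following any such embedding to coincide with $\word{n}[j_0 \bmod \length{n}]$, hence to differ from $b$. Once the Main Claim is established, the unique occurrence of $f$ in $p$ lies entirely outside $[1,\length{n}]$, so no attractor inside $\{\length{m} : m \geq 0\}$ can cover $f$, contradicting the standing hypothesis and completing the contrapositive.
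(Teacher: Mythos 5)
Your outline coincides with the paper's in all essentials: same contradiction/contrapositive structure, the same witness factor (your $f$ is precisely $x_nb_n$, where $\word{n}x_nb_n$ is the shortest prefix of $\infw{u}$ that is not a fractional power of $\word{n}$), and the same reduction to showing that this factor has no occurrence meeting $[1,\length{n}]$. You also pinpoint the correct crux, occurrences starting at $q\in(0,\length{n})$, and your intuition to push such an occurrence down one level through $\mu_c$ is exactly the paper's strategy.

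But you stop there: ``I would attack it using\ldots'' announces a plan, and the deferred step \emph{is} the proposition. The paper carries it out by an induction on $n$ whose hypothesis is deliberately stronger than the bare statement you want: not merely that $x_nb_n$ is absent from $\word{n}x_n$, but that every non-suffix occurrence of $x_n$ inside $\word{n}x_n$ is followed by the \emph{specific} letter $b\ne b_n$ for which $\word{n}x_nb$ remains a prefix of $\word{n}^\omega$. That extra precision is what survives the morphism: Remark~\ref{R:extensions of letters} shows $x_{n-1}$ cannot end in $k-1$, so $x_n=\mu(x_{n-1})0^\ell$ and any non-suffix occurrence of $x_n$ in $\word{n}x_n=\mu(\word{n-1}x_{n-1})0^\ell$ lifts to a non-suffix occurrence of $x_{n-1}$ in $\word{n-1}x_{n-1}$, and the inductive data about the letter following $x_{n-1}$ then fixes the letter following $x_n$. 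One must also treat the degenerate branch $x_n=\eps$ separately (there $\word{n}=01\cdots n$ by Proposition~\ref{P:alternative definition of the words}, and $b_n=n+1$ does not occur in $\word{n}$). None of this bookkeeping is automatic; without it your Main Claim is an assertion, not a proof. A secondary slip: in the delicate case the occurrence at $q<\length{n}$ may straddle past $\length{n}$, so what you must control is a non-suffix occurrence inside the longer word $\word{n}x_n$, not ``inside $\word{n}$'' as you write.
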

\begin{proof}
Assume to the contrary that there exists $N$ such that $\infw{u}[0,\length{N+1}-1)$ is not a fractional power of $\word{N}$. Therefore, let us denote $\word{N}x_Nb_N$, where $x_N$ is a possibly empty word and $b_N$ is a letter, the shortest prefix of $\infw{u}[0,\length{N+1}-1)$ that is not a fractional power of $\word{N}$. We show that $x_Nb_N$ is not a factor of $\word{N}x_N$, and therefore that any string attractor of $\word{N}x_Nb_N$ must contain a position in $[\length{N} + 1, \length{N+1} - 1]$, which contradicts the assumption.

More generally, we show that, for all $n$, if $\word{n}x_nb_n$ is the shortest prefix of $\infw{u}$ that is not a fractional power of $\word{n}$, then either $x_n = \eps$ or it is always followed in $\word{n}x_n$ by the letter $b \ne b_n$ such that $\word{n}x_nb$ is a prefix of $\word{n}^\omega$. Observe that, in the first case, by Proposition~\ref{P:alternative definition of the words}, $c_0 = 1$ and $c_i = 0$ for all $1 \leq i \leq n$. In other words, $\word{n} = 012 \cdots n$ and $b_n = n+1$. Therefore we can indeed conclude that $x_nb_n$ is not a factor of $u_nx_n$ in both cases.

We proceed by induction on $n$. If $n = 0$, then $\word{n}=0$ and $\word{n}x_nb_n = 0^{c_0}1$, so $x_n = 0^{c_0 - 1}$ is always followed by $b=0$ in $\word{n}x_n$.

Assume now that the claim is true for $n-1$ and let us prove it for $n$. If $x_{n-1} = \eps$, then, as above, $\word{n} = 012 \cdots n$. Therefore, $\word{n+1} = \mu(\word{n}) = 012 \cdots n \mu(n)$ and $x_n$ is either $\eps$ (if $\mu(n) = (n+1)$) or $0$. In the second case, we have $b_n \in \{0, n+1\}$ and $x_n$ is only followed by $1 \ne b_n$ in $\word{n}x_n$ so we conclude.

If $x_{n-1} \ne \eps$, then $\mu(\word{n-1}x_{n-1})$ is a prefix of $\infw{u}$ that is a fractional power of $\word{n}$, so $\mu(x_{n-1})$ is a prefix of $x_n$. Notice that $x_{n-1}$ cannot end with $(k-1)$. Indeed, this follows from Remark~\ref{R:extensions of letters} since $\word{n-1}x_{n-1}$ is the longest common prefix between $\infw{u}$ and $\word{n-1}^\omega$. This implies that any non-suffix occurrence of $x_n$ in $\word{n}x_n$ comes from a non-suffix occurrence of $x_{n-1}$ in $\word{n-1}x_{n-1}$. By the induction hypothesis, such an occurrence of $x_{n-1}$ is always followed by $b \ne b_{n-1}$ such that $\word{n-1}x_{n-1}b$ is a prefix of $\word{n-1}^\omega$. Therefore, if $0^\ell$ is the longest common prefix between $\mu(b)$ (resp., $\mu(b0)$ if $b = k-1$) and $\mu(b_{n-1})$ (resp., $\mu(b_{n-1}0)$ if $b_{n-1} = k-1$), then $x_n = \mu(x_{n-1}) 0^\ell$ and $x_n$ is always followed in $\word{n}x_n$ by $c \ne b_n$ such that $\mu(\word{n-1}x_{n-1})0^\ell c = \word{n}x_nc$ is a prefix of $\word{n}^\omega$.
\end{proof}

Based on this result, we see that fractional powers of the word $\word{n}$ will play a key role in determining string attractors.
We thus introduce some notations.

\begin{definition}
\label{D: longest fractional power prefix}
Let $c$ satisfy~\eqref{eq:WH}. For all $n \geq 0$, we let $\upperw{n}$ denote the longest prefix of $\infw{u}$ that is a fractional power of $\word{n}$, i.e., the longest common prefix between $\infw{u}$ and $(\word{n})^\omega$.
For all $n \geq 0$, we also let $\upperb{n} = |\upperw{n}|$.
\end{definition}

Working with fractional powers also has another advantage from the string attractor point of view. Indeed, there is no trivial link in general between the string attractors of the finite words $w$ and $wa$, where $a$ is a letter. However, we have the following result which can be derived from the proofs of \cite[Propositions 12 and 15]{saandcow}.

\begin{proposition}
\label{P:sa of fractional powers}
Let $z$ be a non-empty word and let $x$ and $y$ be fractional powers of $z$ with $|z| \leq |x| \leq |y|$. If $\Gamma$ is a string attractor of $x$, then $\Gamma \cup \{|z|\}$ is a string attractor of $y$.
\end{proposition}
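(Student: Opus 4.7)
The plan is to verify the string-attractor property of $\Gamma \cup \{|z|\}$ directly: given an arbitrary non-empty factor $w$ of $y$, I want to produce an occurrence of $w$ in $y$ that crosses either a position in $\Gamma$ or the newly added position $|z|$. The key structural input is that both $x$ and $y$ are prefixes of the periodic word $z^\omega$, with $|z| \leq |x| \leq |y|$, so in particular the first copy of $z$ is a common prefix of $x$ and $y$.

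First, I would use the periodicity of $z^\omega$ to normalize the occurrence of $w$. Any occurrence of $w$ in $z^\omega$ can be shifted backward by $|z|$ as long as the starting position exceeds $|z|$; iterating, I obtain a leftmost occurrence at some position $j_0 \in \{1,\ldots,|z|\}$. Since $w$ is a factor of $y$, some occurrence of $w$ has endpoint at most $|y|$, and that occurrence cannot start earlier than $j_0$, so $j_0+|w|-1 \leq |y|$ holds as well; hence this leftmost occurrence actually sits inside $y$.

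Next I would split into two cases according to where this canonical occurrence ends. If $j_0+|w|-1 \geq |z|$, the interval $[j_0,j_0+|w|-1]$ contains $|z|$, so the occurrence crosses $|z|$ and we are done. Otherwise, the occurrence lies entirely within the length-$|z|$ prefix of $y$, which equals $z$, and therefore also within $x$ since $z$ is a prefix of $x$. In that case $w$ is a factor of $x$, and the hypothesis that $\Gamma$ is a string attractor of $x$ yields an occurrence of $w$ in $x$ crossing some element of $\Gamma$. This occurrence is still an occurrence of $w$ in $y$ because $x$ is a prefix of $y$.

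The only step requiring a small argument is justifying that the leftmost occurrence of $w$ in $z^\omega$ fits into $y$; once that is established, the case distinction is elementary and no real obstacle remains. This is essentially the idea of Propositions~12 and~15 in~\cite{saandcow}, repackaged to fit the fractional-power setting.
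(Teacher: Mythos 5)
Your proof is correct. The argument --- shift any occurrence of a factor $w$ backward by multiples of $|z|$ (using the periodicity of $z^\omega$) until it starts at some position $j_0\in\{1,\ldots,|z|\}$, observe that this only moves the endpoint leftward so the shifted occurrence still fits inside $y$, and then split on whether $j_0+|w|-1\ge|z|$ --- is exactly the right one. In the first case the occurrence crosses the position $|z|$; in the second it sits entirely inside the length-$|z|$ prefix $z$ of $y$, which is also a prefix of $x$, so the hypothesis that $\Gamma$ is a string attractor of $x$ supplies a covering occurrence, and that occurrence persists in $y$ since $x$ is a prefix of $y$. One small point of phrasing: when you say the occurrence inside $y$ ``cannot start earlier than $j_0$,'' the cleanest justification is to take that very occurrence (ending at or before position $|y|$) as the one you shift back, so that $j_0\le p$ holds by construction rather than by any global leftmost-occurrence claim; this is what you seem to intend and it closes the gap. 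Note that the paper itself does not supply a proof of this proposition; it only remarks that the statement ``can be derived from the proofs of'' Propositions~12 and~15 of~\cite{saandcow}. So there is no in-paper argument to compare against line by line, but your self-contained proof is the natural one.
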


Motivated by this result, to describe string attractors of each prefix, it is now sufficient to be able to describe, for all $n \geq 1$, a string attractor of a prefix of length $m_n$ for some $m_n \in [\length{n} - 1, \upperb{n-1}]$. This argument is the key in the proof of our main theorem.

For $n \geq 0$, we denote
\[
    \Gamma_n =
    \begin{cases}
    \{\length{0}, \ldots, \length{n}\}, & \text{if } 0\le n \leq k-1;\\
    \{\length{n-k+1}, \ldots, \length{n}\}, & \text{if } n \geq k.
    \end{cases}
\]
We also define
\begin{equation}
\label{eq:def-of-Pn}
        \lowerb{n} =
    \begin{cases}
    \length{n}, & \text{if } 0\le n \leq k-1;\\
    \length{n} + \length{n-k+1} - \length{n-k} - 1, & \text{if } n \geq k.
    \end{cases}
\end{equation}

The next lemma directly follows from the definition of $\lowerb{n}$ and from Proposition~\ref{P:alternative definition of the words}.

\begin{lemma}
\label{L:inequality with lower and upper}
Let $c$ satisfy~\eqref{eq:WH}. Then $\lowerb{n} \leq \length{n+1} - 1$ for all $n \geq 0$.
\end{lemma}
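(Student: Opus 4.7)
The plan is to unpack the piecewise definition of $\lowerb{n}$ and, in each case, apply the recurrence~\eqref{eq: recurrence for U_n} for $(\length{n})_{n \geq 0}$ coming from Proposition~\ref{P:alternative definition of the words}. The only arithmetic ingredient beyond that recurrence will be the working hypothesis \eqref{eq:WH}, namely $c_0 \geq 1$ and $c_{k-1} \geq 1$.

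For $0 \leq n \leq k-1$, I would need to show $\length{n+1} \geq \length{n} + 1$. When $n+1 \leq k-1$, the first branch of~\eqref{eq: recurrence for U_n} gives $\length{n+1} = 1 + \sum_{i=0}^{n} c_i \length{n-i} \geq 1 + c_0 \length{n} \geq 1 + \length{n}$, since $c_0 \geq 1$ and all remaining terms are nonnegative. The transitional subcase $n = k-1$ is the one mild subtlety, because $\length{n}$ and $\length{n+1}$ are computed via two different branches of~\eqref{eq: recurrence for U_n}. Here I would retain the $c_0 \length{k-1}$ and $c_{k-1} \length{0}$ contributions from $\length{k} = \sum_{i=0}^{k-1} c_i \length{k-1-i}$, and combine $c_0, c_{k-1} \geq 1$ with $\length{0} = 1$ to obtain $\length{k} \geq \length{k-1} + 1$, as required.

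For $n \geq k$, I would rewrite the desired inequality as $\length{n+1} \geq \length{n} + \length{n-k+1} - \length{n-k}$. Keeping only the $i=0$ and $i=k-1$ terms in the second branch of~\eqref{eq: recurrence for U_n} yields $\length{n+1} \geq c_0 \length{n} + c_{k-1} \length{n-k+1} \geq \length{n} + \length{n-k+1}$, which is already stronger than needed since $\length{n-k} \geq 0$.

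In short, the lemma is a short bookkeeping exercise built directly on the recurrence for the lengths, and there is no real obstacle. The only point requiring a little care is the transitional case $n = k-1$, where $\length{n}$ and $\length{n+1}$ must be expanded with two different branches of~\eqref{eq: recurrence for U_n} and handled separately from the generic $n+1 \leq k-1$ argument.
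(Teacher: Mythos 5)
Your proof is correct, and it is exactly the intended unpacking: the paper simply states that the lemma ``directly follows from the definition of $\lowerb{n}$ and from Proposition~\ref{P:alternative definition of the words}'' (equivalently, from the recurrence~\eqref{eq: recurrence for U_n}) without spelling out the cases. Your case split, including the careful handling of the transitional index $n=k-1$ where the two branches of the recurrence meet, fills in precisely the bookkeeping the authors left implicit.
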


To simplify the statement of the following theorem, we set $\Gamma_{-1} = \emptyset$.
\begin{theorem}
\label{T:sa of prefixes}
Let $c = c_0 \cdots c_{k-1} \geq 0^k$ with $c_0, c_{k-1} \geq 1$ and such that $\infw{u}[0,U_{n+1} - 1)$ is a fractional power of $u_n$, and let $n \geq 0$.
\begin{enumerate}
    \item If $m \in [\length{n}, \upperb{n}]$, then $\Gamma_{n-1} \cup \{\length{n}\}$ is a string attractor of $\infw{u}[0,m)$.
    \item If $m \in [\lowerb{n},\upperb{n}]$, then $\Gamma_n$ is a string attractor of $\infw{u}[0,m)$.
\end{enumerate}
\end{theorem}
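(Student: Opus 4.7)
The plan is to prove both parts simultaneously by strong induction on $n$. The base case $n = 0$ is immediate: $\infw{u}[0, m) = 0^m$ for $m \in [1, \upperb{0}]$, and every factor $0^j$ is crossed by the unique position $1 \in \Gamma_0$.

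For the inductive step, I first establish part~1. By the inductive hypothesis (part~2 at index $n - 1$), $\Gamma_{n-1}$ is a string attractor of $\infw{u}[0, m')$ for every $m' \in [\lowerb{n-1}, \upperb{n-1}]$. Combining Lemma~\ref{L:inequality with lower and upper} (which gives $\lowerb{n-1} \leq \length{n} - 1$) with the theorem's hypothesis (which gives $\length{n} - 1 \leq \upperb{n-1}$), the integer $\length{n} - 1$ lies in this interval, so $\Gamma_{n-1}$ is a string attractor of $\infw{u}[0, \length{n} - 1)$. Every factor of $\word{n} = \infw{u}[0, \length{n})$ either lies in $\infw{u}[0, \length{n} - 1)$ (covered by $\Gamma_{n-1}$) or ends at position $\length{n}$ (covered by $\{\length{n}\}$), making $\Gamma_{n-1} \cup \{\length{n}\}$ a string attractor of $\word{n}$. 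For $m \in (\length{n}, \upperb{n}]$, Proposition~\ref{P:sa of fractional powers} applied with $z = x = \word{n}$ and $y = \infw{u}[0, m)$ preserves the attractor, since $|z| = \length{n}$ already belongs to it.

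For part~2, the case $n \leq k - 1$ coincides with part~1 because $\Gamma_n = \Gamma_{n-1} \cup \{\length{n}\}$ and $\lowerb{n} = \length{n}$. The substantive case is $n \geq k$: I must show that $\length{n-k}$ may be dropped once $m \geq \lowerb{n}$. Fix such an $m$ and let $x$ be a factor of $\infw{u}[0, m)$ whose only occurrence crossing $\Gamma_{n-1} \cup \{\length{n}\} = \Gamma_n \cup \{\length{n-k}\}$ (guaranteed by part~1) crosses solely $\length{n-k}$. Using 1-indexed positions, such an occurrence $[s, s + |x| - 1]$ satisfies $s \leq \length{n-k} \leq s + |x| - 1 \leq \length{n-k+1} - 1$, the final bound avoiding $\length{n-k+1}$ and its successors in $\Gamma_n$.

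The plan is to produce an alternative occurrence of $x$ at the shifted interval $[s + \length{n} - \length{n-k},\, s + \length{n} - \length{n-k} + |x| - 1] \subseteq [\length{n} - \length{n-k} + 1, \lowerb{n}]$, which lies within $[1, m]$ and automatically crosses $\length{n} \in \Gamma_n$. To check that the shift indeed reproduces $x$, I will prove the structural identity
\[
    \infw{u}[\length{n} - \length{n-k}, \lowerb{n}) = \infw{u}[0, \length{n-k+1} - 1),
\]
matching both sides on a common prefix of length $\length{n-k}$ and a common suffix of length $\length{n-k+1} - \length{n-k} - 1 = \lowerb{n} - \length{n}$. For the prefix, iterating Proposition~\ref{P:alternative definition of the words} with $c_{k-1} \geq 1$ gives that $\word{n}$ ends with $\word{n-k}$, so $\infw{u}[\length{n} - \length{n-k}, \length{n}) = \word{n-k} = \infw{u}[0, \length{n-k})$. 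For the suffix, I chain two periodicities coming from the theorem's hypothesis at two different indices. At index $n$, the fact that $\infw{u}[0, \length{n+1} - 1)$ is a fractional power of $\word{n}$ gives period $\length{n}$, and one verifies (using $c_0, c_{k-1} \geq 1$ and the recurrence in~\eqref{eq: recurrence for U_n}) that $\lowerb{n} \leq \length{n+1} - 1$, so $\infw{u}[\length{n}, \lowerb{n}) = \infw{u}[0, \lowerb{n} - \length{n})$; at index $n - k$, the fact that $\infw{u}[0, \length{n-k+1} - 1)$ is a fractional power of $\word{n-k}$ gives period $\length{n-k}$, so $\infw{u}[0, \length{n-k+1} - \length{n-k} - 1) = \infw{u}[\length{n-k}, \length{n-k+1} - 1)$. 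Composing the two matches yields the displayed identity, hence the desired alternative occurrence of $x$.

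The main obstacle is recognizing that the hypothesis must be invoked twice -- at both indices $n$ and $n - k$ -- and that the very definition of $\lowerb{n}$ is exactly calibrated to align these two periodicities so that their composition transports the window around $\length{n-k}$ to one crossing $\length{n}$.
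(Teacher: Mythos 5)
Your proposal is correct and follows essentially the same route as the paper's proof: the same induction, the same use of Lemma~\ref{L:inequality with lower and upper} and Proposition~\ref{P:sa of fractional powers} for part~1, and for part~2 the same reduction to showing that factors crossing only $\length{n-k}$ reappear crossing $\length{n}$, established via the suffix $\word{n-k}$ of $\word{n}$ (from Proposition~\ref{P:alternative definition of the words} and $c_{k-1}\geq 1$) combined with the two fractional-power periodicities at indices $n$ and $n-k$; the paper phrases this as the identity $w = w'$ for $\infw{u}[0,\lowerb{n}) = \word{n}w$ and $w' = \infw{u}[\length{n-k},\length{n-k+1}-1)$, which is the same alignment you describe as a shift by $\length{n}-\length{n-k}$.
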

\begin{proof}
Let us simultaneously prove the two claims by induction on $n$.
If $n = 0$, then $1 \leq m \leq c_0$, so $\infw{u}[0,m) = 0^m$ and the conclusion directly follows for both claims.
Assume now that the claims are satisfied for $n-1$ and let us prove them for $n$. By Lemma~\ref{L:inequality with lower and upper}, $\lowerb{n-1} \leq \length{n} -1$, and by hypothesis on $c$, $\length{n} - 1 \leq \upperb{n-1}$. Therefore, by the induction hypothesis, $\Gamma_{n-1}$ is a string attractor of $\infw{u}[0,\length{n} - 1)$. This implies that $\Gamma_{n-1} \cup \{\length{n}\}$ is a string attractor of $\word{n}$ so, by Proposition~\ref{P:sa of fractional powers}, of $\infw{u}[0,m)$ for all $m \in [\length{n}, \upperb{n}]$. This ends the proof of the first claim.

Let us now prove the second claim. Observe that, using Proposition~\ref{P:sa of fractional powers}, it suffices to prove that $\Gamma_n$ is a string attractor of $\infw{u}[0,P_n)$.
If $0\le n \leq {k-1}$, then $\Gamma_n = \Gamma_{n-1} \cup \{\length{n}\}$ so we can directly conclude using the first claim.
Thus assume that $n \geq k$. Then by the first claim, $\Gamma_n \cup \{\length{n-k}\} = \Gamma_{n-1} \cup \{\length{n}\}$ is a string attractor of $\infw{u}[0,P_n)$. Therefore, it remains to show that the position $\length{n-k}$ is not needed in the string attractor. In other words, we prove that the factors of $\infw{u}[0,P_n)$ having an occurrence crossing position $\length{n-k}$ (and no other position of $\Gamma_n \cup \{\length{n-k}\}$) have another occurrence crossing a position in $\Gamma_n$. More precisely, we show that they have an occurrence crossing position $\length{n}$.
To help the reader with the proof, we illustrate the situation in Figure~\ref{fig:proof of sa}.

As the smallest position in $\Gamma_n$ is $\length{n-k+1}$, we need to consider the factor occurrences crossing position $\length{n-k}$ in $\infw{u}[0,\length{n-k+1} - 1)$. So, if we write $\infw{u}[0,P_n) = \word{n} w$, it is sufficient to show that $\word{n-k}$ is a suffix of $\word{n}$ and that $w' := \infw{u}[\length{n-k},\length{n-k+1} - 1)$ is a prefix of $w$.
Observe that 
\begin{equation}
\label{eq:length-suffix-w}
    |w| = \lowerb{n} - \length{n} = \length{n-k+1} - \length{n-k} - 1
\end{equation}
by definition of $\lowerb{n}$, so $|w'| = |w|$ and we actually show that $w' = w$.

\begin{figure}[h!t]
    \centering
    \begin{tikzpicture}[cross/.style={path picture={ 
  \draw[black]
(path picture bounding box.south east) -- (path picture bounding box.north west) (path picture bounding box.south west) -- (path picture bounding box.north east);
}}]  
        \node (\lowerb{n}) at (-1,0.4) {$\infw{u}[0,\lowerb{n})=$};
        \node [fit={(0,0) (7,0.8)}, inner sep=0pt, draw=black, thick] (u_n) {};
        \node [fit={(7,0) (8.5,0.8)}, inner sep=0pt, draw=black, thick, fill=gray!30] (u_n-k-1) {};

        \node [thick] (label_u_n) at (3.5,1) {$\word{n}$};
        \node [thick] (label_w) at (7.75,1) {$w$};
        
        \draw [thick, decoration={brace, mirror, raise=0.05cm, amplitude=5pt}, decorate] (0,0) -- node[below=.2cm] {$\word{n-k}$} (2,0) {};
        \node [circle, fill=red,inner sep=0pt,minimum size=4pt] (T_n-k) at (1.8,0.5) {};
        \node [cross] (T_n-k bis) at (1.8,0.5) {};
        \node [red] (label_T_n-k) at (1.8,0.2) {$\length{n-k}$};
        
        \draw [thick, decoration={brace, mirror, raise=0.05cm, amplitude=5pt}, decorate] (2,0) -- node[below=.2cm] {$w'$} (3.5,0) {};        
        \node [circle,fill=red,inner sep=0pt,minimum size=4pt] (T_n-k+1) at (3.7,0.5) {};
        \node [red] (label_T_n-k) at (3.7,0.2) {$\length{n-k+1}$};
        \draw [thick, decoration={brace, raise=0.05cm, amplitude=5pt}, decorate] (0,0.8) -- node[above=.2cm] {$w'$} (1.5,0.8) {};
        
        \draw [thick, decoration={brace, mirror, raise=0.05cm, amplitude=5pt}, decorate] (5,0) -- node[below=.2cm] {$\word{n-k}$} (7,0) {};
        \draw [thick, decoration={brace, mirror, raise=0.05cm, amplitude=5pt}, decorate] (7,0) -- node[below=.2cm] {$w'$} (8.5,0) {};
        \node [circle,fill=red,inner sep=0pt,minimum size=4pt] (T_n) at (6.7,0.5) {};
        \node [red] (label_T_n) at (6.7,0.2) {$\length{n}$};
    \end{tikzpicture}
    \caption{Representation of the proof of the second claim of Theorem~\ref{T:sa of prefixes}. As we warned the reader before, elements in a string attractor are indexed starting at $1$ (in red), while indices of letters in $\infw{u}$ start at $0$.}
    \label{fig:proof of sa}
\end{figure}

The fact that $\word{n-k}$ is a suffix of $\word{n}$ is a direct consequence of Proposition~\ref{P:alternative definition of the words} as $c_{k-1} \geq 1$ by assumption.
Since $\infw{u}[0,U_{n-k+1}-1)$ is a fractional power of $\word{n-k}$ by assumption, $w'$ is a prefix of $\word{n}$. By Lemma~\ref{L:inequality with lower and upper} and by assumption, we also have $\lowerb{n} \leq \length{n+1}-1 \leq \upperb{n}$, so $\infw{u}[0,\lowerb{n})$ is a fractional power of $\word{n}$. This implies that $w = w'$.
\end{proof}

\section{Fractional power prefixes and anti-Lyndon words}
\label{sec: frac power and anti-Lyndon}

In this section, we study the words $\upperw{n}$ and their lengths $\upperb{n}$. As we will show in Proposition~\ref{P:longest fractional power}, these words have a particular structure related to (anti-)Lyndon words.
To prove this, we introduce some more notations.
For all $n \geq 0$, the pair $\{i_n, j_n\}$ designates the two (distinct) letters following $\upperw{n}$ in $\infw{u}$ and in $(\word{n})^\omega$. 
Without loss of generality, we always assume that $i_n < j_n$.

\begin{example}\label{ex:c102 - i_nj_n}
Set $c=102$.
Recall from Example~\ref{ex:c102 - words and lengths} that the first few words in $(\word{n})_{n\geq0}$ are 
$0$, $01$, $012$, $01200$, $012000101$, $012000101012012$. It is then easy to see that the first few words in $(\upperw{n})_{n\geq0}$ are $0$, $01$, $012 0$, $01200 01$, $012000101 0120$.
So we conclude that the first few pairs in $(\{i_n, j_n\})_{n\geq0}$ are $\{0,1\}$, $\{0,2\}$, $\{0,1\}$, $\{0,2\}$, $\{0,1\}$.
\end{example}

The following lemma gives a recursive construction for the sequences $(i_n)_{n \geq 0}$ and $(j_n)_{n \geq 0}$, as well as a first structure for the words $\upperw{n}$.

\begin{lemma}
\label{L:construction of i, j, l}
Let $c$ satisfy~\eqref{eq:WH}.
For all $n \geq 0$, we have $\upperw{n} = \word{n}^{\ell_0} \word{n-1}^{\ell_1} \cdots \word{0}^{\ell_n}$ where the sequences $(\ell_n)_{n \geq 0}$, $(i_n)_{n \geq 0}$, $(j_n)_{n \geq 0}$ are recursively constructed as follows:
$\ell_0 = c_0$, $i_0 = 0$, $j_0 = 1$, and for all $n \geq 0$, if $j_n \leq k-2$, we have
\[
    \{\ell_{n+1}, i_{n+1}, j_{n+1}\} =
    \begin{cases}
    \{c_{j_n}, 0, j_n + 1\}, & \text{if } c_{i_n} > c_{j_n};\\
    \{c_{j_n}, i_n + 1, j_n + 1\}, & \text{if } c_{i_n} = c_{j_n};\\
    \{c_{i_n}, 0, i_n + 1\}, & \text{if } c_{i_n} < c_{j_n};
    \end{cases}
\]
and if $j_n = k-1$, we have $\{\ell_{n+1}, i_{n+1}, j_{n+1}\} = \{c_{i_n}, 0, i_n + 1\}$.
\end{lemma}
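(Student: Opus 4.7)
The plan is to proceed by induction on $n$, simultaneously verifying the claimed factorization of $\upperw{n}$, the values of $\ell_n$, $i_n$, $j_n$, and the auxiliary invariant $c_{i_n} \le c_0$, which I will use to handle the case $j_n = k-1$. The base case $n=0$ is immediate: $\mu(0) = 0^{c_0}\cdot 1$ and $\word{0}^\omega = 0^\omega$ yield the longest common prefix $0^{c_0} = \word{0}^{c_0}$, so $\ell_0 = c_0$ and $\{i_0, j_0\} = \{0, 1\}$, and the invariant is trivial.

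The crucial identity for the inductive step is $\mu(\infw{u}) = \infw{u}$ together with $\mu(\word{n}^\omega) = \word{n+1}^\omega$. Applying $\mu$ to the factorization of $\upperw{n}$ produces a common prefix $\mu(\upperw{n}) = \word{n+1}^{\ell_0}\word{n}^{\ell_1}\cdots\word{1}^{\ell_n}$ of $\infw{u}$ and $\word{n+1}^\omega$, so $\upperw{n+1}$ will equal $\mu(\upperw{n})$ followed by a run $0^{\ell_{n+1}} = \word{0}^{\ell_{n+1}}$ of zeros. Letting $a$ and $b$ denote the letters following $\upperw{n}$ in $\infw{u}$ and $\word{n}^\omega$, respectively (so that $\{a,b\} = \{i_n, j_n\}$), the value of $\ell_{n+1}$ and the new pair $\{i_{n+1}, j_{n+1}\}$ arise from the longest common prefix of the two continuations $\mu(a)\cdots$ and $\mu(b)\cdots$ past $\mu(\upperw{n})$.

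When $j_n \le k-2$, both $\mu(a) = 0^{c_a}(a+1)$ and $\mu(b) = 0^{c_b}(b+1)$ end with distinct nonzero letters, so a direct comparison, split according to the sign of $c_{i_n} - c_{j_n}$, yields exactly the three subcases of the formula and their new pairs. The hard part is the case $j_n = k-1$, where $\mu(k-1) = 0^{c_{k-1}}$ may be entirely absorbed into the common prefix, forcing one to peek beyond it. Here I will invoke Remark~\ref{R:extensions of letters}: since $k-1$ is always followed by $0$ in $\infw{u}$ and in $\word{n}^\omega$, the side whose next letter is $k-1$ continues after $\mu(\upperw{n})$ with $\mu(k-1)\mu(0)\cdots = 0^{c_{k-1}+c_0}\,1\cdots$, while the other side begins with $\mu(i_n)\cdots = 0^{c_{i_n}}(i_n+1)\cdots$. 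Combining the invariant $c_{i_n}\le c_0$ with $c_{k-1}\ge 1$ gives the strict inequality $c_{i_n} < c_{k-1} + c_0$, so the common prefix is exactly $0^{c_{i_n}}$, with mismatching next letters $0$ and $i_n+1$; this establishes $\ell_{n+1} = c_{i_n}$ and $\{i_{n+1}, j_{n+1}\} = \{0, i_n+1\}$.

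It remains to propagate the invariant $c_{i_{n+1}} \le c_0$. It is trivial when $i_{n+1} = 0$; in the middle subcase $i_{n+1} = i_n + 1$, I would trace the sequence back to the latest step $m \le n$ with $i_m = 0$, so that all intermediate steps are middle subcases with $i_{m+t} = t$, $j_{m+t} = j_m + t$, and the chain condition $c_t = c_{j_m+t}$ for $0\le t \le n-m$. Examining the history preceding $(0, j_m)$ shows that $c_1,\ldots,c_{j_m-1}$ are each bounded by $c_0$ (either through prior case-(a) steps, which directly impose $c_0 > c_j$, or through earlier middle-case chains whose bound is inherited), and the chain condition then propagates this bound periodically to every $c_{i_{m+t}}$ attained in the chain.
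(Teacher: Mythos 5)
Your overall structure (induction on $n$, applying $\mu$ to $\upperw{n}$, comparing the continuations letter by letter, and invoking Remark~\ref{R:extensions of letters} for the case $j_n = k-1$) coincides with the paper's argument. The one place where your proposal departs, and where it has a real gap, is the choice of auxiliary invariant.

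You propagate $c_{i_n} \le c_0$. This invariant is indeed exactly what the case $j_n = k-1$ requires, but it does not close under the inductive step in the middle subcase. There you pass from $(i_n, j_n)$ to $(i_n+1, j_n+1)$, and the hypothesis $c_{i_n}\le c_0$ tells you nothing about $c_{i_n+1}$. You notice this and propose to ``trace back'' to the most recent $m$ with $i_m = 0$ and argue that $c_1,\dots,c_{j_m-1}$ are each bounded by $c_0$ ``through prior case-(a) steps or earlier middle-case chains whose bound is inherited.'' That is precisely the statement $c_0 = \max\{c_0,\dots,c_{j_m-1}\}$, which is not implied by your single-index invariant; establishing it by ``examining the history'' would require its own induction, and the language ``whose bound is inherited'' hides exactly the step that needs proving. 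Moreover, once a middle-case chain is long enough that $i_n + 1 \ge j_m$, the index $i_n+1$ leaves the initial window $\{1,\dots,j_m-1\}$, and your sketch falls back on a periodicity argument whose justification again rests on the same unproved claim. As stated, the patch is circular.

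The clean repair — and the one the paper uses — is to strengthen the auxiliary invariant to $c_0 = \max\{c_0,\dots,c_{j_n-1}\}$. This propagates in a single line: if $j_{n+1} \le j_n$ there is nothing to check, and if $j_{n+1} = j_n + 1$ (cases (a) and (b) with $j_n \le k-2$) then the newly adjoined term is $c_{j_n}$ and $c_{j_n}\le c_{i_n}\le \max\{c_0,\dots,c_{j_n-1}\} = c_0$, using the case condition for the first inequality and the induction hypothesis (with $i_n < j_n$) for the second. Since $i_n < j_n$, this stronger invariant immediately yields your $c_{i_n}\le c_0$ when you need it in the $j_n = k-1$ case. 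Substituting this invariant for yours, and dropping the last paragraph, turns your proposal into essentially the paper's proof.
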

\begin{proof}
We prove the claimed structure for the sequences $(\ell_n)_{n \geq 0}$, $(i_n)_{n \geq 0}$, $(j_n)_{n \geq 0}$ and also that $c_0 = \max\{c_0, \ldots, c_{j_n - 1}\}$ for all $n \geq 0$ by induction.

For the base case $n = 0$, as $\word{0} = 0$ and $\word{1} = 0^{c_0} 1$ is a prefix of $\infw{u}$, we directly have $\ell_0 = c_0$, $i_0 = 0$, $j_0 = 1$ and $c_0 = \max\{c_0\}$.

Let us now move to the induction step: assume that both claims are satisfied for $n$ and let us prove them for $n + 1$.
For the first claim, by definition, $\mu(\upperw{n})$ is a prefix of both $\mu(\infw{u})=\infw{u}$ and $\mu(\word{n})^\omega = (\word{n+1})^\omega$. Moreover, it is followed in one of them by $\mu(i_n) = 0^{c_{i_n}} \cdot (i_n + 1)$ and in the other by $\mu(j_n)$.
The image of $j_n$ under $\mu$ takes two forms.

If $j_n \leq k - 2$, then $\mu(j_n) = 0^{c_{j_n}} \cdot (j_n + 1)$. Thus, as $i_n + 1 \ne j_n + 1$, we have $\upperw{n+1} = \mu(\upperw{n}) 0^{\ell_{n+1}}$ where $0^{\ell_{n+1}}$ is the longest common prefix between $\mu(i_n)$ and $\mu(j_n)$. We then have
\[
    \{\ell_{n+1}, i_{n+1}, j_{n+1}\} =
    \begin{cases}
    \{c_{j_n}, 0, j_n + 1\}, & \text{if } c_{i_n} > c_{j_n};\\
    \{c_{j_n}, i_n + 1, j_n + 1\}, & \text{if } c_{i_n} = c_{j_n};\\
    \{c_{i_n}, 0, i_n + 1\}, & \text{if } c_{i_n} < c_{j_n}.
    \end{cases}
\]
The conclusion of the first claim follows from the fact that $\mu(\upperw{n}) = \word{n+1}^{\ell_0} \cdots \word{1}^{\ell_n}$ by the induction hypothesis.

If $j_n = k - 1$, then by Remark~\ref{R:extensions of letters}, $\upperw{n+1}$ is not only followed by $\mu(k-1)$ but by $\mu(k-1) \mu(0) = 0^{c_{k-1} + c_0} \cdot 1$. By the second claim, we have 
\[
c_{i_n} \leq \max\{c_0, \ldots, c_{k-2}\} = c_0 < c_{k-1} + c_0
\]
as $c_{k-1} \geq 1$ by assumption. We conclude that $\{\ell_{n+1}, i_{n+1}, j_{n+1}\} = \{c_{i_n}, 0, i_n + 1\}$.

The second claim is also satisfied as $\max\{c_0, \ldots, c_{j_{n+1} - 1}\} \leq \max\{c_0, \ldots, c_{j_n - 1}\}$. Indeed, in all cases, either $j_{n+1} \leq j_n$, or $j_{n+1} = j_n + 1$ and $c_{j_n} \leq \max\{c_0, \ldots, c_{j_n - 1}\}$.
\end{proof}

\begin{example}
\label{Ex:construction of i, j, l}
Let us take $c = 210221$ for which $k = 6$. The first few elements of the sequences $(\ell_n)_{n \geq 0}$, $(i_n)_{n \geq 0}$, $(j_n)_{n \geq 0}$ are given in Table~\ref{tab:Duval-inverse-order}.
We already observe that they are (eventually) periodic. Indeed, $\{i_1, j_1\} = \{0, 2\} = \{i_4, j_4\}$ and, as $\{i_N, j_N\}$ entirely determines the rest of the sequences, $(\ell_n)_{n \geq 0}$, $(i_n)_{n \geq 0}$, $(j_n)_{n \geq 0}$ are eventually periodic of period length 3 starting from index 1 (and even from index 0 for $(\ell_n)_{n \geq 0}$).
\begin{table}
    \caption{Illustration of the construction of the sequences $(\ell_n)_{n \geq 0}$, $(i_n)_{n \geq 0}$, $(j_n)_{n \geq 0}$ in the case where $c = 210221$.}
    \label{tab:Duval-inverse-order}
    \begin{tabular}{c|c|c|c}
        $n$ & Comparison & $\ell_n$ & $\{i_n,j_n\}$ \\[0.5pt]
        \hline
        \rule{0pt}{.8\normalbaselineskip}
        \!\!$0$ & / & $c_0 = 2$ & $\{0,1\}$ \\
        $1$ & $c_0 > c_1$ & $c_1 = 1$ & $\{0,2\}$ \\
        $2$ & $c_0 > c_2$ & $c_2 = 0$ & $\{0,3\}$ \\
        $3$ & $c_0 = c_3$ & $c_3 = 2$ & $\{1,4\}$ \\
        $4$ & $c_1 < c_4$ & $c_1 = 1$ & $\{0,2\}$ \\
        $5$ & $c_0 > c_2$ & $c_2 = 0$ & $\{0,3\}$ \\
        $6$ & $c_0 = c_3$ & $c_3 = 2$ & $\{1,4\}$   
    \end{tabular}
\end{table}
\end{example}

From the recursive definition given in Lemma~\ref{L:construction of i, j, l}, we derive the following result.

\begin{lemma}
\label{L:i, j and borders}
Let $c$ satisfy~\eqref{eq:WH}.
For all $n \geq 0$, the word $c_0 \cdots c_{i_n - 1}$ is a border of the word $c_0 \cdots c_{j_n - 1}$, i.e., $c_0 \cdots c_{i_n - 1} = c_{j_n - i_n} \cdots c_{j_n - 1}$.
\end{lemma}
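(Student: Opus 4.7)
The plan is a straightforward induction on $n$, using the case analysis for $(\ell_{n+1}, i_{n+1}, j_{n+1})$ in terms of $(\ell_n, i_n, j_n)$ provided by Lemma~\ref{L:construction of i, j, l}. The point is that the ``border'' relation is precisely what is preserved by the equality branch of that recurrence, while all other branches reset the candidate border to the (trivially valid) empty word.

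For the base case $n=0$, we have $i_0 = 0$ and $j_0 = 1$, so $c_0\cdots c_{i_0-1} = \varepsilon$, which is trivially a border of $c_0 = c_0\cdots c_{j_0-1}$.

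For the inductive step, assume $c_0 \cdots c_{i_n - 1} = c_{j_n - i_n} \cdots c_{j_n - 1}$ and examine the four cases of Lemma~\ref{L:construction of i, j, l}. If $j_n = k-1$, or if $j_n \le k-2$ with $c_{i_n} \neq c_{j_n}$, then the recurrence forces $i_{n+1} = 0$, so the border $c_0\cdots c_{i_{n+1}-1} = \varepsilon$ is again trivial. The only substantive case is $j_n \le k-2$ with $c_{i_n} = c_{j_n}$: then $i_{n+1} = i_n + 1$ and $j_{n+1} = j_n + 1$, so in particular $j_{n+1} - i_{n+1} = j_n - i_n$. Appending the equal letters $c_{i_n} = c_{j_n}$ to both sides of the induction hypothesis gives
\[
c_0 \cdots c_{i_n} \;=\; c_{j_n - i_n} \cdots c_{j_n} \;=\; c_{j_{n+1} - i_{n+1}} \cdots c_{j_{n+1}-1},
\]
which is exactly the border condition for $n+1$.

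There is no real obstacle here; the statement is essentially a reformulation of the equality branch of Lemma~\ref{L:construction of i, j, l}. The only point that requires minimal care is bookkeeping the index shift $j_{n+1} - i_{n+1} = j_n - i_n$ so that the suffix endpoints line up, and noting that in every non-equality branch the claim collapses to the empty-border case and therefore holds vacuously.
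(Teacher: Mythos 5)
Your proof is correct and follows essentially the same route as the paper's: induction on $n$, observing that all branches of Lemma~\ref{L:construction of i, j, l} except the equality case reset $i_{n+1}=0$ (so the border is empty and the claim is trivial), and in the equality case appending the equal letter $c_{i_n}=c_{j_n}$ to both sides of the induction hypothesis while noting $j_{n+1}-i_{n+1}=j_n-i_n$. The paper merely phrases the trivial cases as a single upfront remark that $i_n=0$ yields an empty suffix, but the content is identical.
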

\begin{proof}
Once again, we prove the result by induction on $n \geq 0$. Notice that, if $i_n = 0$, then the word $c_{j_n - i_n} \cdots c_{j_n - 1}$ is empty, hence the conclusion. This is in particular the case for $n = 0$. Assume now that the claim holds for $n$ and let us prove it for $n+1$. By Lemma~\ref{L:construction of i, j, l}, we have $i_{n+1} = 0$ unless $c_{i_n} = c_{j_n}$. In this case, $i_{n+1} = i_n + 1$ and $j_{n+1} =  j_n + 1$ so, as $c_0 \cdots c_{i_n - 1} = c_{j_n - i_n} \cdots c_{j_n - 1}$ by the induction hypothesis, we directly have $c_0 \cdots c_{i_{n+1} - 1} = c_{j_{n+1} - i_{n+1}} \cdots c_{j_{n+1} - 1}$.
\end{proof}

We now show the link with (anti-)Lyndon words. Before doing so, we recall some famous properties of Lyndon words that will be useful.
The first result is part of the folklore, but a proof can be found, for instance, in~\cite{DUVAL20082261}.

\begin{proposition}\label{P:lyndon are unbordered}
Lyndon words are unbordered, i.e., if $w$ is a both a prefix and a suffix of a Lyndon word $v$, then $w = \eps$ or $w = v$.
\end{proposition}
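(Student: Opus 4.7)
The plan is to argue by contradiction. Suppose that a Lyndon word $v$ of length $n$ admits a non-trivial border $w$ with $0 < b := |w| < n$; I will derive two strict lexicographic inequalities between $v$ and certain of its conjugates that directly contradict each other. Since $w$ is simultaneously a prefix and a suffix of $v$, we may write $v = wy = xw$ for uniquely determined non-empty words $x$ and $y$, each of length $n-b$.

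The key step is to compare $v$ with two specific conjugates: $c_{n-b} := wx$, obtained by rotating the suffix $w$ to the front, and $c_b := yw$, obtained by rotating the prefix $w$ to the back. Before invoking Lyndon minimality, I must first rule out $x = y$: if $x = y$ then $xw = wx$, so by the classical commutation lemma $x$ and $w$ are both powers of a common word, and hence $v$ is a non-trivial power, contradicting primitivity. With $x \neq y$ in hand, both $c_{n-b}$ and $c_b$ differ from $v$, and Lyndon minimality together with primitivity yields the strict inequalities $v < c_{n-b}$ and $v < c_b$. Canceling the common prefix $w$ in the first comparison produces $y < x$, while the second comparison agrees on its last $b$ characters (both equal to $w$), so the first disagreement must lie within the length-$(n-b)$ blocks, producing $x < y$.

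The two conclusions $y < x$ and $x < y$ are incompatible, yielding the contradiction. I do not foresee any substantial obstacle. The only subtle point is the preliminary verification that $x \neq y$, which rests on primitivity via the commutation lemma; once it is in place, the symmetric cancellation of $w$ from both sides makes the argument immediate. The heart of the proof lies in identifying precisely which two conjugates should be compared so that $w$ can be eliminated cleanly on both sides.
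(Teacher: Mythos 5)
Your proof is correct. The paper does not include its own proof of this statement; it notes it is folklore and cites Duval (2008). Your argument — first ruling out $x = y$ via the commutation lemma and primitivity, then comparing $v = wy$ against the conjugate $wx$ and $v = xw$ against the conjugate $yw$, cancelling the shared block $w$ on each side to obtain the incompatible strict inequalities $y < x$ and $x < y$ — is a clean, self-contained version of the standard folklore argument, working directly from the definition of a Lyndon word as a primitive word that is lexicographically minimal among its conjugates. All the delicate points are handled: the two chosen rotations genuinely are conjugates of $v$, they differ from $v$ because $x \neq y$, the prefix cancellation $wy < wx \Rightarrow y < x$ is valid because the two words have equal length, and the comparison $xw < yw$ reduces to $x < y$ because $x \neq y$ forces the first disagreement into the initial length-$(n-b)$ block. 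A marginally shorter route uses the characterization that a Lyndon word is strictly smaller than every proper non-empty suffix, from which a non-trivial border $w$ would be simultaneously a proper suffix (hence $> v$) and a proper prefix (hence $< v$); but establishing that characterization requires an argument of comparable length to yours, so there is no real saving.
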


The next result is shown within the proof of the Chen-Fox-Lyndon Theorem (Theorem~\ref{thm:CFL-fact}). See, for instance, \cite[Theorem 5.1.5]{Lothaire97}.

\begin{proposition}
\label{P:longest Lyndon prefix}
Let $w\in A^*$ be a non-empty word and let $(\ell_1,\cdots, \ell_n)$ be its Lyndon factorization as in Theorem~\ref{thm:CFL-fact}.
Then $\ell_1$ is the longest Lyndon prefix of $w$.
\end{proposition}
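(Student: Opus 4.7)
The plan is to prove the statement by contradiction: letting $p$ denote the longest Lyndon prefix of $w$ (which exists and satisfies $|p| \geq |\ell_1|$ simply because $\ell_1$ itself is a Lyndon prefix), I would assume $|p| > |\ell_1|$ and derive an impossibility from the defining property of Lyndon words, namely that every proper nonempty suffix $v$ of a Lyndon word $u$ satisfies $u < v$. The case $n=1$ is trivial ($w = \ell_1$ is itself Lyndon, so no Lyndon prefix can exceed it), so I would assume $n \geq 2$.

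Under the assumption $|p| > |\ell_1|$, I would let $i$ be the largest index with $\ell_1 \cdots \ell_i$ a prefix of $p$. Since $p$ is a prefix of $w=\ell_1\cdots\ell_n$, we have $1 \leq i \leq n$; the case $i=n$ forces $p=w$, but then $w$ would be Lyndon, contradicting $n\geq 2$. Hence $1 \leq i < n$ and we can write $p = \ell_1 \cdots \ell_i s$, where $s$ is a (possibly empty) proper prefix of $\ell_{i+1}$. I would then split into two sub-cases according to whether $s$ is empty.

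If $s \neq \varepsilon$, then $s$ is a proper nonempty suffix of the Lyndon word $p$, so $p < s$. On the other hand, chaining lexicographic inequalities: $s < \ell_{i+1}$ (proper prefix), $\ell_{i+1} \leq \ell_1$ (by the Lyndon factorization ordering), and $\ell_1 < p$ (since $\ell_1$ is a proper prefix of $p$). This gives $s < p$, contradicting $p<s$. If $s = \varepsilon$, then necessarily $i \geq 2$ (otherwise $p = \ell_1$), and $\ell_i$ itself is a proper nonempty suffix of $p$, so $p < \ell_i$. But again $\ell_i \leq \ell_1 < p$, contradiction. In both cases we reach an impossibility, forcing $|p|=|\ell_1|$ and hence $p = \ell_1$.

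The only delicate point is ensuring the chained inequalities stay strict at the right place: one has to verify that each step either is strict or is followed by a strict step, which is immediate since $\ell_1$ being a proper prefix of $p$ gives $\ell_1 < p$ strictly. Other than this bookkeeping, the argument is entirely mechanical given the Lyndon characterization via suffixes and the definitional ordering of the Chen--Fox--Lyndon factorization.
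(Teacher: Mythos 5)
Your proof is correct. The paper does not supply its own argument for this classical fact---it defers to \cite[Theorem~5.1.5]{Lothaire97}---and your deduction, resting on the characterization of a Lyndon word as being strictly lexicographically smaller than each of its nonempty proper suffixes together with the ordering $\ell_1\geq\cdots\geq\ell_n$ of the Chen--Fox--Lyndon factorization, is the standard, complete argument (the only step you leave implicit is that $p=w$ Lyndon forces $n=1$ by uniqueness of the factorization, which is immediate).
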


Duval provided an algorithm computing the Lyndon factorization of a word in linear time~\cite{Duval83}. It is based on a decomposition of the word into three parts $xyz$: we already computed the Lyndon factorization of $x$ and we are now looking at $w=yz$, where $y$ is a fractional power of a Lyndon word $v$ and $z$ is the part that we still need to explore. We keep track of the position of the first letter of $z$ with an index $j$, and of the period of $y$ (i.e., the length of $v$) using an index $i$ such that $j - i = |v|$.

\begin{algo}[Duval~\cite{Duval83}]
\label{A: Duval}
Let $(A, \leq)$ be an ordered set and let $w = w_0 \cdots w_n$ be a length-$n$ word over $A$. We denote $w_{n+1}$ a new symbol smaller than all the letters of $w$.
Set $i = 0$ and $j = 1$.
While $i \leq n$, compare $w_i$ and $w_j$ and do the following:
\begin{itemize}
\item
    if $w_i < w_j$, then set $j = j+1$ and $i = 0$;
\item
    if $w_i = w_j$, then set $j = j+1$ and $i = i+1$;
\item
    if $w_i > w_j$, then output $w_0 \cdots w_{j-i-1}$ as the next element in the Lyndon factorization and restart the algorithm with the word $w_{j-i} \cdots w_n$.
\end{itemize}
\end{algo}

Using the notation of the paragraph preceding Algorithm~\ref{A: Duval}, we explain the three cases present in the algorithm.
We want to compute the next Lyndon word in the Lyndon factorization of a word, knowing that of some of its prefixes.
By definition of $i$ and $j$, we compare the letter $w_j$ in $z$ with the letter $w_i$, spaced by $|v|$ letters.
\begin{itemize}
\item
    If $w_i < w_j$,
    then $y w_j$ is a Lyndon word by~\cite[Lemme 2]{Duval80}, so we update $y$ to $y w_j$ and $v$ to $y$. 
\item    
    If $w_i = w_j$,
    then $y w_j$ is still a fractional power of $v$, so we simply update $y$ to $y w_j$ without changing the length of $v$ (that is, we do not modify $j-i$).
\item
    If $w_i > w_j$, then $y w_j$ cannot be a prefix of a Lyndon word, so the longest Lyndon prefix of $w$ is $v$.
\end{itemize}

We are now ready to prove the structure of the words $\upperw{n}$ and its link with anti-Lyndon words.

\begin{proposition}
\label{P:longest fractional power}
Let $c$ satisfy~\eqref{eq:WH}.
Define $\infw{a}$ as the infinite concatenation of the longest anti-Lyndon prefix of the word $c_0 \cdots c_{k-2}$. Then for all $n\geq0$, $\upperw{n} = \word{n}^{\infw{a}_0} \word{n-1}^{\infw{a}_1} \cdots \word{0}^{\infw{a}_n}$.
In particular, $\upperb{n} = \sum_{i=0}^n \infw{a}_i \length{n-i}$.
\end{proposition}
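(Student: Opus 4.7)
By Lemma~\ref{L:construction of i, j, l}, we already have the factorization $\upperw{n} = \word{n}^{\ell_0} \word{n-1}^{\ell_1} \cdots \word{0}^{\ell_n}$ where $(\ell_n, i_n, j_n)_{n\geq 0}$ is given by an explicit recursion, and taking lengths immediately yields $\upperb{n} = \sum_{i=0}^n \ell_i \length{n-i}$. The entire task therefore reduces to proving the identity $\ell_n = \infw{a}_n$ for every $n \geq 0$. Throughout, write $v = c_0 \cdots c_{p-1}$ for the longest anti-Lyndon prefix of $c_0 \cdots c_{k-2}$ and $p = |v|$, so that $\infw{a} = v^\omega$.

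The plan is to recognize the recursion in Lemma~\ref{L:construction of i, j, l} as Duval's algorithm (Algorithm~\ref{A: Duval}) applied to the word $c_0 \cdots c_{k-2}$ with the inverse order $\leq_-$ on the alphabet. Indeed, the three conditions governing the recursion correspond exactly to Duval's three steps: $c_{i_n} > c_{j_n}$ (i.e.\ $c_{i_n} <_- c_{j_n}$) is the ``new Lyndon candidate'' step; $c_{i_n} = c_{j_n}$ is the ``continue inside a fractional power'' step; and $c_{i_n} < c_{j_n}$ (i.e.\ $c_{i_n} >_- c_{j_n}$) is the ``output'' step. The boundary case $j_n = k-1$ acts as a forced output, justified via Remark~\ref{R:extensions of letters} together with the inequality $c_{i_n} \leq c_0 < c_{k-1} + c_0$ already exploited in the proof of Lemma~\ref{L:construction of i, j, l}. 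By Proposition~\ref{P:longest Lyndon prefix} applied with $\leq_-$, the first word output by Duval's algorithm on $c_0 \cdots c_{k-2}$ is precisely $v$.

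From here I would proceed by strong induction on $n$, under the joint invariants (a) the current candidate $c_0 \cdots c_{j_n - i_n - 1}$ is a prefix of $v$, and (b) $\ell_0 \cdots \ell_n$ is the length-$(n+1)$ prefix of $v^\omega$. The base case $n=0$ is immediate since $\ell_0 = c_0$ and $c_0$ is a prefix of $v$. The two cases $c_{i_n} \geq c_{j_n}$ preserve both invariants by direct inspection: they either extend the candidate strictly (and still within $v$, thanks to $v$ being the \emph{longest} anti-Lyndon prefix of $c_0 \cdots c_{k-2}$) or continue reading the same periodic block, and in both situations the emitted letter $\ell_{n+1} = c_{j_n}$ coincides with $v_{(n+1) \bmod p}$.

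The main obstacle is the output case $c_{i_n} < c_{j_n}$ together with the boundary case $j_n = k-1$: there Duval would output $v$ and restart on the residual suffix, whereas the Lemma's recursion instead sets $(i_{n+1}, j_{n+1}) = (0, i_n + 1)$ and emits $\ell_{n+1} = c_{i_n}$. Verifying that this reset still preserves (a) and (b) is the delicate part. The plan is to combine Lemma~\ref{L:i, j and borders}, which states that $c_0 \cdots c_{i_n - 1}$ is a border of $c_0 \cdots c_{j_n - 1}$, with Proposition~\ref{P:lyndon are unbordered} applied to $v$: using that at each output step $j_n - i_n = p$ and $c_0 \cdots c_{j_n - 1}$ is a fractional power of $v$, the border-period correspondence for an unbordered primitive period constrains $i_n$ modulo $p$ enough to force $\ell_{n+1} = c_{i_n} = v_{(n+1) \bmod p}$ and to keep $c_0 \cdots c_{i_n}$ a prefix of $v$. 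This closes the induction and completes the proof.
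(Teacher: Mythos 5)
Your overall strategy — reading Lemma~\ref{L:construction of i, j, l} as Duval's algorithm for the inverse order, invoking Proposition~\ref{P:longest Lyndon prefix} to identify the first factor $v$, and then using Lemma~\ref{L:i, j and borders} together with the unbordered property of Lyndon words to control the reset steps — is exactly the paper's approach. However, your inductive invariant (a) is false, and that is precisely the point you flagged as ``the delicate part''.

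Concretely, after the first output step the quantity $j_n - i_n$ can exceed $p = |v|$. Take $c = 1010101$ with $k = 7$: here $v = 10$, $p = 2$, $\infw{a} = (10)^\omega$. The recursion gives $N = 5$ (the first $n$ with $j_n = k-1$), with $i_5 = 4$, $j_5 = 6$, so the reset produces $i_6 = 0$, $j_6 = i_5 + 1 = 5$ and hence $j_6 - i_6 = 5 > p$; the next (Duval-extend) step gives $j_7 - i_7 = 6$. So $c_0 \cdots c_{j_n - i_n - 1}$ is not even close to a prefix of $v$ in the steps following a reset, even though $\ell_n = \infw{a}_n$ continues to hold. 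Consequently your auxiliary claim ``at each output step $j_n - i_n = p$'' also fails as stated: the border/unbordered argument only yields that $j_n - i_n$ is a positive \emph{multiple} of $p$, and bounding which multiple requires an invariant other than (a). Moreover (b) alone carries no information about $\{i_n, j_n\}$ and thus cannot drive the recursion. The paper's induction replaces (a) by a congruence: for $n \ge p$ it maintains $\ell_n = c_{n \bmod p}$, $j_n \equiv (n+1) \bmod p$, and $j_n \le N+1$, and splits the step into $j_n \le N$ (where minimality of powers of $v$ among powers of its conjugates in the inverse order forces $c_{i_n} \ge c_{j_n}$) and $j_n = N+1$ (the only place output can occur, where the border/unbordered argument shows $N+1-i_n$ is a multiple of $p$, whence $i_n \equiv i_N \bmod p$ and periodicity of $c_0\cdots c_N$ gives the correct $\ell_{n+1}$). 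You would need to substitute something of this form for your invariant (a) before the induction closes.
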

\begin{proof}
By Lemma~\ref{L:construction of i, j, l}, the beginning of the construction of the sequences $(\ell_n)_{n \geq 0}$, $(i_n)_{n \geq 0}$, $(j_n)_{n \geq 0}$ corresponds exactly to the first application of Duval's algorithm to the word $c_0 \cdots c_{k-2}$ with the order $\invlex{\leq}$. 
More specifically, letting $N$ denote the first index $n$ for which $c_{i_n} < c_{j_n}$ or $j_n = k-1$ and setting $p = j_N - i_N$, then Duval's algorithm for $\invlex{\leq}$ implies that the word $\ell_0 \cdots \ell_{p-1}$ is the first element in the Lyndon factorization of $c_0 \cdots c_{k-2}$ for the order $\invlex{\leq}$.
Therefore, $\ell_0 \cdots \ell_{p-1} = c_0 \cdots c_{p-1}$ is the longest anti-Lyndon prefix of $c_0 \cdots c_{k-2}$ by Proposition~\ref{P:longest Lyndon prefix}. Let us denote it $v$.
As in the statement, let $\infw{a}=vvv\cdots$.

Observe that, by definition of $N$ and by Lemma~\ref{L:construction of i, j, l}, for all $1 \leq n \leq N$, we have $j_n = n + 1$ as it is incremented at each step, and $\ell_{n} = c_{j_{n-1}} = c_n$.
In particular, $p = j_N - i_N = N + 1 - i_N$.

We now prove that $\ell_n = \infw{a}_n$ for all $n \geq 0$.
By definition of $\infw{a}$, the equality holds for $0\le n<p$, so it is enough to look at all $n \geq p$.
We show by induction on $n \geq p$ that $\ell_n = c_{n \bmod p}$, $j_n \equiv (n + 1) \bmod p$, and $j_n \leq N + 1$.

For $p \leq n \leq N$, we already have $\ell_n = c_n$, $j_n = n + 1$, and $j_n \leq N + 1$ by the observation made above. 
Moreover, Duval's algorithm implies that $c_0 \cdots c_N$ is periodic of period length $p$, so  $\ell_n = c_n = c_{n \bmod p}$.
This is also true for $n = N + 1$ as $N + 1 = p + i_N \equiv i_N \bmod p$. Indeed, by Lemma~\ref{L:construction of i, j, l} and by definition of $N$, we have $\ell_{N + 1} = c_{i_N} = c_{N + 1 \bmod p}$ and 
\begin{equation}
\label{Eq: case N}
    j_{N + 1} = i_N + 1 \equiv N + 2 \bmod p.
\end{equation}

Assume now that the claim is true for indices up to $n \geq N + 1$ and let us prove it for $n+1$. By the induction hypothesis, we have $j_n \leq N + 1$, so we distinguish two cases.

\textbf{Case 1.}
    If $j_n \leq N$, then $j_n\leq k - 2$ (as $j_N = N+1 \le k-1$). By Lemma~\ref{L:i, j and borders}, comparing $c_{i_n}$ and $c_{j_n}$ is equivalent to comparing $c_0 \cdots c_{i_n}$ and $c_{j_n - i_n} \cdots c_{j_n}$. As mentioned earlier in the proof, $c_0 \cdots c_N$ is a fractional power of $v$, so $c_0 \cdots c_{i_n}$ is a prefix of a power of $v$ while $c_{j_n - i_n} \cdots c_{j_n}$ is a prefix of a power of a conjugate of $v$. As $v$ is Lyndon for $\invlex{\leq}$, its powers are smaller than the powers of its conjugates for $\invlex{\leq}$, thus $c_0 \cdots c_{i_n} \invlex{\leq} c_{j_n - i_n} \cdots c_{j_n}$ and $c_{i_n} \leq_- c_{j_n}$, i.e., $c_{i_n} \geq c_{j_n}$. Using Lemma~\ref{L:construction of i, j, l}, we conclude that $\ell_{n+1} = c_{j_n} = c_{n+1 \bmod p}$ as $j_n \leq N$ is congruent to $n+1 \bmod p$ by the induction hypothesis and $c_0 \cdots c_N$ has period length $p$. We also have $j_{n+1} = j_n + 1$ thus $j_{n+1} \leq N + 1$ and $j_{n+1} \equiv n + 2 \bmod p$.

\textbf{Case 2.}
    If $j_n = N + 1$, then using Lemma~\ref{L:i, j and borders}, we know that $c_0 \cdots c_N = c_0 \cdots c_{j_n - 1}$ has a border of length $i_n$ so $c_0 \cdots c_N$ has period length $N + 1 - i_n$. Since it also has period length $p$ and $c_0 \cdots c_{p-1}$ is anti-Lyndon thus unbordered by Proposition~\ref{P:lyndon are unbordered}, we must have that $N + 1 - i_n$ is a multiple of $p = N + 1 - i_N$. In other words,
    \begin{equation}
    \label{Eq: congruence iN and in}
        i_n \equiv i_N \bmod p.
    \end{equation}
    In particular, by periodicity, $c_{i_n} = c_{i_N}$.
    Moreover, $j_n = N+1 = j_N$ so $\{c_{i_n}, c_{j_n}\} = \{c_{i_N}, c_{j_N}\}$. Therefore, by Lemma~\ref{L:construction of i, j, l} and by definition of $N$, we have 
    \begin{equation}
    \label{Eq: comparison l and j between n et N}
        \ell_{n+1} = \ell_{N+1} \quad \text{and} \quad
    j_{n+1} = i_n + 1 \leq N + 1.
    \end{equation}
    By the induction hypothesis for $n$, we have 
    \begin{equation}
    \label{Eq: congruence N+1 and n+1}
        N + 1 = j_n \equiv n + 1 \bmod p. 
    \end{equation}   
    We conclude that 
    \[
        \ell_{n + 1} = \ell_{N + 1} = c_{(N + 1) \bmod p} = c_{(n + 1) \bmod p},
    \]
    where the first equality follows by~\eqref{Eq: comparison l and j between n et N}, the second by the induction hypothesis for $N+1$, and the last by Congruence~\eqref{Eq: congruence N+1 and n+1}, and
    \[
        j_{n+1} = i_n + 1 \equiv i_N + 1 \equiv N + 2 \equiv n + 2 \bmod p,
    \]
    where the first equality follows from~\eqref{Eq: comparison l and j between n et N}, the first congruence from~\eqref{Eq: congruence iN and in}, the second by~\eqref{Eq: case N}, and the last by Congruence~\eqref{Eq: congruence N+1 and n+1}.
    This ends the proof.
\end{proof}

\begin{example}
\label{ex:c102 - frac pow prefixes}
Let us pursue Example~\ref{ex:c102 - i_nj_n} for which $c=102$.
The first few words in $(\upperw{n})_{n\geq0}$ are $0, 01, 012 0, 01200 01, 012000101 0120$.
The longest anti-Lyndon prefix of $c_0c_1=10$ is $10$ itself so $\infw{a} = (10)^\omega$. We can easily check that the first few $q_n$'s indeed satisfy Proposition~\ref{P:longest fractional power}.
\end{example}

Now that we have a good understanding of the fractional powers of the words $\word{n}$, we look for an equivalent description of the condition in Proposition~\ref{P:string attractor imply fractional powers} and Theorem~\ref{T:sa of prefixes}.
This is the purpose of Proposition~\ref{pro:equivalent-conditions-4}, but we first need the following technical lemma.

\begin{lemma}
\label{L:inequality of c_0...c_k-2}
    Let $c$ satisfy~\eqref{eq:WH}  and let $w$ denote the longest anti-Lyndon prefix of $c_0 \cdots c_{k-2}$.
    \begin{enumerate}
        \item Then $c_0 \cdots c_{k-2} \geq \infw{a}[0, k-2]$.
        \item Moreover, $c_0 \cdots c_{k-2}(c_{k-1}-1)$ is maximal among its conjugates if and only if the following three assertions hold:
        \begin{enumerate}
            \item\label{lem:iff-assumption-i} We have $c_0 \cdots c_{k-2} = \infw{a}[0, k-2]$.
            \item\label{lem:iff-assumption-ii} We have $c_{k-1} - 1 \leq \infw{a}_{k-1}$.
            \item\label{lem:iff-assumption-iii} If $c_{k-1} - 1 = \infw{a}_{k-1}$, then $c_0 \cdots c_{k-2}(c_{k-1}-1)$ is an integer power of $w$.
        \end{enumerate}
    \end{enumerate}
\end{lemma}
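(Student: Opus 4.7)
My plan combines Duval's algorithm (Algorithm~\ref{A: Duval}) for the inverse order $\invlex{\leq}$ with a ``bigger conjugate'' contradiction argument for both directions of the second claim.

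For the first claim, I run Duval on $c_0\cdots c_{k-2}$ with $\invlex{\leq}$. By Proposition~\ref{P:longest Lyndon prefix}, the first output of the algorithm is the longest anti-Lyndon prefix $w$, of some length $n$. Either no real stop occurs, in which case $c_0\cdots c_{k-2}$ is a fractional power of $w$ and thus equals $\infw{a}[0,k-2]$; or a real stop occurs at some position $J<k-1$, where $c_0\cdots c_{J-1}=w^q\cdot w[0,r)$ with $J=qn+r$, $q\geq 1$, $0\leq r<n$, and the stop condition for $\invlex{\leq}$ reads $c_J>w_r=\infw{a}_J$ in the ordinary order. In both scenarios $c_0\cdots c_{k-2}\geq \infw{a}[0,k-2]$, with equality in the first case and strict inequality in the second.

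For the forward direction of the second claim, set $d:=c_0\cdots c_{k-2}(c_{k-1}-1)$ and suppose $d$ is maximal among its conjugates. I establish (i), (ii), and (iii) in order, each by producing a lexicographically greater conjugate whenever the assertion fails. If (i) fails, the first claim gives a real Duval stop at $J<k-1$ with $c_J>w_r$ and $c_0\cdots c_{J-1}=w^q\cdot w[0,r)$; then the conjugate $d^{(qn)}$ begins with $w[0,r)\cdot c_J$, strictly exceeding $d$ (which begins with $w[0,r)\cdot w_r$) at position $r$. If (i) holds but (ii) fails, write $k-1=qn+r$ with $q\geq 1$, $0\leq r<n$: the conjugate $d^{(qn)}$ begins with $w[0,r)\cdot(c_{k-1}-1)$, again strictly exceeding $d$ at position $r$. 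Finally, if (i) and equality in (ii) hold, then $d=w^\omega[0,k)=w^q\cdot w[0,r+1)$; if $r+1<n$, the conjugate $w[0,r+1)\cdot w^q$ strictly exceeds $d$ via $w^\omega > \sigma^{r+1}(w^\omega)$ (an instance of the anti-Lyndon property of $w$), contradicting maximality, hence $r+1=n$ and $d=w^{q+1}$, which is (iii).

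For the backward direction, suppose (i), (ii), (iii) hold. If equality holds in (ii), then (iii) gives $d=w^\ell$ for some $\ell\geq 1$; every cyclic conjugate of $w^\ell$ has the form $v^\ell$ for some cyclic conjugate $v$ of $w$, and $w\geq v$ (since $w$ is anti-Lyndon) yields $d\geq v^\ell$ letter by letter. Otherwise (ii) is strict and I write $d=w^\omega[0,k-1)\cdot a$ with $a<w^\omega_{k-1}$; for an arbitrary cyclic conjugate $d^{(i)}$ with $i\in\{1,\ldots,k-1\}$, I use $w^\omega\geq \sigma^i(w^\omega)$ (since $w$ is anti-Lyndon and primitive by Proposition~\ref{P:lyndon are unbordered}). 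Setting $t=k-1-i$ and letting $p$ denote the first position where $w^\omega$ and $\sigma^i(w^\omega)$ differ (with $p=\infty$ iff $n\mid i$): if $p<t$ then $d>d^{(i)}$ already at position $p$, while the delicate case $p\geq t$ is handled by comparing at position $t$, where $w^\omega_t\geq w^\omega_{i+t}=w^\omega_{k-1}>a=d^{(i)}[t]$ (equality on the left if $p>t$ or $p=\infty$, strict if $p=t$), so $d>d^{(i)}$ in both sub-cases.
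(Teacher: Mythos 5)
Your proof is correct, and it arrives at the same conclusion as the paper's, but by a more explicit, algorithmic route. For the first claim, you unwind Duval's algorithm (Algorithm~\ref{A: Duval}) with the inverse order step by step to locate either a ``real stop'' at some $J<k-1$ with $c_J>\infw{a}_J$ or no stop at all; the paper instead argues directly by contradiction, picking a minimal index where $c_0\cdots c_i<\infw{a}[0,i]$ and invoking~\cite[Lemme 2]{Duval80} to deduce that $c_0\cdots c_i$ would itself be an anti-Lyndon prefix longer than $w$. For the forward direction of the second claim, your exhibition of the specific greater conjugate $d^{(qn)}$ at position $r$ is a concrete instantiation of the paper's argument (which writes the offending prefix as $w^\ell u a v$ with $ua>w$ and rotates by $\ell|w|$). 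For the backward direction, the paper reaches the goal in one stroke via the chain $c_i\cdots c_{k-2}(c_{k-1}-1)=\infw{a}[i,k-2](c_{k-1}-1)\le \infw{a}[i,k-1]\le \infw{a}[0,k-1-i]=c_0\cdots c_{k-1-i}$, then handles the equality case via assertion (c); your version splits on whether (ii) is strict and compares the conjugate position by position, using the primitivity of $w$ (so that $w^\omega$ and $\sigma^i(w^\omega)$ first differ within $|w|$ positions) to control the case analysis. Both are valid; the paper's is more compact while yours is more self-contained since it avoids the appeal to~\cite[Lemme 2]{Duval80}. Two small notational cautions: the shift map you call $\sigma$ clashes with the coding $\sigma$ already used in Proposition~\ref{pro: link with simple Parry}, and primitivity of $w$ follows directly from the definition of (anti-)Lyndon rather than from Proposition~\ref{P:lyndon are unbordered}, which asserts unborderedness.
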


\begin{proof}
We show the first claim. Assume by contradiction that there exists a minimal index $i\in\{|w|,\ldots,k-2\}$ such that $c_0 \cdots c_i < \infw{a}[0,i]$. Then $c_0 \cdots c_i = w^\ell va$ with a proper prefix $v$ of $w$ and a letter $a$ such that $va < w$. So~\cite[Lemme 2]{Duval80} implies that $c_0 \cdots c_i$ is an anti-Lyndon prefix of $c_0 \cdots c_{k-2}$. As $i \geq |w|$, this contradicts the maximality of $w$.

We now turn to the second claim. Assume that $c_0 \cdots c_{k-2}(c_{k-1}-1)$ is maximal among its conjugates.
We first show that $c_0 \cdots c_{k-2} (c_{k-1} - 1) \leq \infw{a}[0,k-1]$. If it is not the case, there exist $\ell \geq 1$, a proper prefix $u$ of $w$, a letter $a$ and a word $v$ such that $c_0 \cdots c_{k-2} (c_{k-1} - 1) = w^\ell u a v$ and $ua > w$. Then $u a v w^\ell > c_0 \cdots c_{k-2} (c_{k-1} - 1)$, so $c_0 \cdots c_{k-2} (c_{k-1} - 1)$ is not maximal among its conjugates. This is a contradiction. 
Therefore, we have $c_0 \cdots c_{k-2} (c_{k-1} - 1) \leq \infw{a}[0,k-1]$.
Using the first claim, we get $c_0 \cdots c_{k-2} = \infw{a}[0,k-2]$ and $c_{k-1} - 1 \leq \infw{a}_{k-1}$, which gives Items~\ref{lem:iff-assumption-i} and~\ref{lem:iff-assumption-ii}.
Now if $c_{k-1}-1 = \infw{a}_{k-1}$, then $c_0 \cdots c_{k-2}(c_{k-1}-1)$ is a fractional power of $w$.
It is moreover maximal among its conjugates by assumption so, since $w$ is anti-Lyndon, $c_0 \cdots c_{k-2}(c_{k-1}-1)$ is in fact an integer power of $w$.
This proves Item~\ref{lem:iff-assumption-iii}. 

Let us now suppose that Assumptions~\ref{lem:iff-assumption-i}, ~\ref{lem:iff-assumption-ii}, and ~\ref{lem:iff-assumption-iii} hold.
Let us consider a conjugate $c_i \cdots c_{k-2} (c_{k-1} - 1) c_0 \cdots c_{i-1}$ for some  $1 \leq i \leq k-1$.
We have
\[
    c_i \cdots c_{k-2} (c_{k-1} - 1) = \infw{a}[i,k-2](c_{k-1}-1) \leq \infw{a}[i,k-1] \leq \infw{a}[0,k-1-i] = c_0 \cdots c_{k-1-i},
\]
where the first and last equalities come from Assumption~\ref{lem:iff-assumption-i}, the first inequality comes from Assumption~\ref{lem:iff-assumption-ii}, and the second inequality comes from the fact that $\infw{a} = w^\omega$ and $w$ is anti-Lyndon.
Therefore, we have two cases.
If $c_i \cdots c_{k-2} (c_{k-1} - 1) < c_0 \cdots c_{k-1-i}$, then
\[
c_i \cdots c_{k-2} (c_{k-1} - 1) c_0 \cdots c_{i-1} < c_0 \cdots c_{k-2} (c_{k-1} - 1),
\]
which is enough in this case.
Otherwise, we get $c_{k-1} - 1 = \infw{a}_{k-1}$, so $c_0 \cdots c_{k-2}(c_{k-1} - 1)$ is an integer power of $w$ by Assumption~\ref{lem:iff-assumption-iii}. Hence it is maximal among its conjugates since $w$ is anti-Lyndon.
\end{proof}

\begin{proposition}
\label{pro:equivalent-conditions-4}
    Let $c$ satisfy~\eqref{eq:WH}. The following assertions are equivalent.
    \begin{enumerate}
        \item \label{pro:equi-cond-1} The word $\infw{u}[0,\length{n+1} - 1)$ is a fractional power of $\word{n}$, i.e., $\length{n+1} - 1 \leq \upperb{n}$, for all $n \geq 0$.
        \item \label{pro:equi-cond-2} We have $\infw{d}^\star[0,n] \leq \infw{a}[0,n]$ for all $n\geq0$.
        \item \label{pro:equi-cond-3} The word $c_0 \cdots c_{k-2}(c_{k-1}-1)$ is maximal among its conjugates.
        \item \label{pro:equi-cond-4} The numeration system $\cS_c$ is greedy.
    \end{enumerate}
\end{proposition}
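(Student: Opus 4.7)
The plan is to establish the four-way equivalence by first noting that $(3)\iff(4)$ is immediate from Theorem~\ref{T:greedy condition}, and then proving $(3)\iff(2)$ and $(2)\iff(1)$.

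For $(3)\iff(2)$, I will use Lemma~\ref{L:inequality of c_0...c_k-2}. In the forward direction, the second item of that lemma recasts $(3)$ as the conjunction of its conditions (i), (ii), (iii); I then check that $\infw{d}^\star$ and $\infw{a}$ either coincide as infinite sequences (exactly when $c_0 \cdots c_{k-2}(c_{k-1}-1)$ is an integer power of the longest anti-Lyndon prefix $w$) or agree on positions $0,\dots,k-2$ while satisfying $\infw{d}^\star_{k-1} < \infw{a}_{k-1}$; both cases yield $(2)$ by taking prefixes. Conversely, applying $(2)$ at $n=k-1$ together with the first item of the same lemma gives conditions (i) and (ii). Condition (iii) is vacuous unless $c_{k-1}-1=\infw{a}_{k-1}$, in which case I iterate $(2)$ for $n\geq k$ to force $\infw{d}^\star=\infw{a}$; the primitivity of the anti-Lyndon word $w$ (combined with the fact that $\infw{d}^\star$ has period $k$) then forces $|w|$ to divide $k$, yielding (iii).

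For $(2)\iff(1)$, I exploit the companion formulas $\length{n+1}-1 = \sum_{i=0}^n \infw{d}^\star_i \length{n-i}$ and $\upperb{n} = \sum_{i=0}^n \infw{a}_i \length{n-i}$ supplied by Lemma~\ref{L:decomposition of T_n+1 - 1} and Proposition~\ref{P:longest fractional power}. Letting $j$ be the first index at which $\infw{d}^\star[0,n]$ and $\infw{a}[0,n]$ disagree, the difference $\upperb{n} - (\length{n+1}-1)$ splits into the leading term $(\infw{a}_j - \infw{d}^\star_j)\length{n-j}$ (of absolute value at least $\length{n-j}$) and a tail $\sum_{i>j}(\infw{a}_i - \infw{d}^\star_i)\length{n-i}$. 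Once I establish that both $\sum_{i>j}\infw{d}^\star_i \length{n-i}$ and $\sum_{i>j}\infw{a}_i \length{n-i}$ are strictly smaller than $\length{n-j}$, the sign of the leading term simultaneously governs the lexicographic and the numerical comparisons, delivering both implications at once.

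The main obstacle is exactly these tail bounds, which amount to bounding, in the positional system with basis $(\length{m})_{m\geq 0}$, the values of cyclic shifts of the periodic sequences $\infw{d}^\star$ and $\infw{a}$. I plan to handle them using the conjugate-maximality of the period $c_0 \cdots c_{k-2}(c_{k-1}-1)$ from $(3)$ (which, by the previous step, is equivalent to $(2)$) and the anti-Lyndon property of $w$, together with a further application of Lemma~\ref{L:decomposition of T_n+1 - 1} to these shifted sequences; the rest of the argument is then routine bookkeeping.
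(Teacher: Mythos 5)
Your handling of $(3)\iff(4)$ and $(3)\iff(2)$ matches the paper's: $(3)\iff(4)$ is directly Theorem~\ref{T:greedy condition}, and for $(3)\iff(2)$ the paper also goes through the three conditions of Lemma~\ref{L:inequality of c_0...c_k-2}, with essentially the same prefix-comparison reasoning. Your alternative phrasing of (iii) via primitivity and the fact that $\infw{d}^\star$ would then have both period $k$ and minimal period $|w|$ is a legitimate replacement for the paper's slightly more explicit $w^\ell v w \le w^\ell v u v$ computation.

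The gap is in $(2)\iff(1)$. The tail bound you rely on, namely $\sum_{i>j}\infw{a}_i\length{n-i} < \length{n-j}$, is false in general. Take the Fibonacci case $c=11$: then $\infw{d}^\star=(10)^\omega$, $\infw{a}=1^\omega$, so the first disagreement is at $j=1$; with $n=3$ we have $\length{0}=1,\length{1}=2,\length{2}=3$, and $\sum_{i>1}^{3}\infw{a}_i\length{3-i}=\length{1}+\length{0}=3=\length{2}$, not strictly less. So the proposed lemma that ``both tails are strictly smaller than $\length{n-j}$'' cannot be established, and the ``both implications at once'' plan breaks. What saves you is that you never actually need that bound: the bound on the $\infw{d}^\star$-tail is the only one used when the leading term is positive (the $(2)\Rightarrow(1)$ direction), and there it does hold because, once $(3)$ is available, every suffix of $\infw{d}^\star[j+1,n]$ is dominated by the corresponding prefix of $\infw{d}^\star$, so $\infw{d}^\star[j+1,n]$ is a length-$(n-j)$ word of the greedy numeration language and its valuation is $<\length{n-j}$. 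For the converse $(1)\Rightarrow(2)$, the clean fix is the one the paper uses: argue by contraposition and take $n$ to be the first index where $\infw{d}^\star[0,n]>\infw{a}[0,n]$; then $j=n$, the tail is empty, and $\length{n+1}-1-\upperb{n}=(\infw{d}^\star_n-\infw{a}_n)\length{0}>0$ immediately. (The paper actually proves $(2)\Rightarrow(1)$ without numerics at all, purely from the word-greedy property of the Dumont--Thomas factorization of $\infw{u}[0,\length{n+1}-1)$, which sidesteps the whole question of monotonicity of valuations; that route is cleaner and avoids the circularity you are flirting with, since your tail bound for $\infw{d}^\star$ already presupposes the equivalence $(2)\iff(3)$.)
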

\begin{proof}
    We prove that~\ref{pro:equi-cond-1} implies~\ref{pro:equi-cond-2} by contraposition.
    Let $n\geq0$ be the smallest integer such that $\infw{d}^\star[0,n] > \infw{a}[0,n]$.
    Note that $n\neq 0$ since $\infw{d}^\star_0 = c_0 = \infw{a}_0$.
    By minimality, we have $\infw{d}^\star[0,n-1] \leq \infw{a}[0,n-1]$, so $\infw{d}^\star[0,n-1] = \infw{a}[0,n-1]$ and $\infw{d}^\star_n > \infw{a}_n$
    Therefore, by the last parts of both Lemma~\ref{L:decomposition of T_n+1 - 1} and Proposition~\ref{P:longest fractional power}, we have $\length{n+1} - 1 > \upperb{n}$.

    We show that~\ref{pro:equi-cond-2} implies~\ref{pro:equi-cond-1} by contraposition. Assume that there exists an integer $n$ such that $\length{n+1} - 1 > \upperb{n}$ and let us show that $\infw{d}^\star[0,n] > \infw{a}[0,n]$. By Proposition~\ref{P:longest fractional power}, $\upperw{n} = \word{n}^{\infw{a}_0} \cdots \word{0}^{\infw{a}_n}$ is a proper prefix of $\infw{u}[0,\length{n+1}-1)$. By Lemma~\ref{L:decomposition of T_n+1 - 1}, $\rep_{\cS_c}(\length{n+1}-1) = \infw{d}^\star[0,n]$, so $\infw{d}^\star_0$ is the largest exponent $e$ such that $\word{n}^e$ is a prefix of $\infw{u}[0,\length{n+1}-1)$. This implies that $\infw{d}^\star_0 \geq \infw{a}_0$. Moreover, if $\infw{a}_0 = \infw{d}^\star_0$, the same argument implies that $\infw{d}^\star_1$ is the largest exponent $e$ such that $\word{n}^{\infw{d}^\star_0}\word{n-1}^e$ is a prefix of $\infw{u}[0,\length{n+1}-1)$.
    In both cases, we have $\infw{d}^\star_0\infw{d}^\star_1 \geq \infw{a}_0\infw{a}_1$. We may  iterate the reasoning to obtain $\infw{d}^\star[0,n] \geq \infw{a}[0,n]$.
    As $\upperw{n}$ is a proper prefix of $\infw{u}[0,\length{n+1}-1)$, the inequality cannot be an equality so we conclude.

    We prove that~\ref{pro:equi-cond-2} implies~\ref{pro:equi-cond-3}.
    Assume that $\infw{d}^\star[0,n] \leq \infw{a}[0,n]$ for all $n \geq 0$. Using the first part of Lemma~\ref{L:inequality of c_0...c_k-2}, this directly implies Items~\ref{lem:iff-assumption-i} and~\ref{lem:iff-assumption-ii} of Lemma~\ref{L:inequality of c_0...c_k-2}. Let us show Item~\ref{lem:iff-assumption-iii} to conclude that $c_0 \cdots c_{k-2}(c_{k-1}-1)$ is maximal among its conjugates by Lemma~\ref{L:inequality of c_0...c_k-2}. Assume that $c_{k-1} -1 = \infw{a}_{k-1}$. Therefore, $c_0 \cdots c_{k-2}(c_{k-1}-1) = w^\ell v$ for some proper prefix $v$ of $w$ and $\ell \geq 1$. Let $u$ be such that $w = vu$. We then have 
    \[
        w^\ell vw = \infw{d}^\star[0, (\ell + 1) |w| + |v|) \leq \infw{a}[0, (\ell + 1) |w| + |v|) = w^\ell vuv.
    \]
    Since $w$ is anti-Lyndon, the only possibility is to have $v = \eps$ and $c_0 \cdots c_{k-2}(c_{k-1}-1)$ is an integer power of $w$.
    This proves Item~\ref{lem:iff-assumption-iii}.

    We show that~\ref{pro:equi-cond-3} implies~\ref{pro:equi-cond-2}.
    By Item~\ref{lem:iff-assumption-ii} of Lemma~\ref{L:inequality of c_0...c_k-2}, we have $c_{k-1} - 1 \le \infw{a}_{k-1}$. 
    If the previous inequality is strict, then the conclusion is direct by Item~\ref{lem:iff-assumption-i} of Lemma~\ref{L:inequality of c_0...c_k-2}.
    Otherwise, by Item~\ref{lem:iff-assumption-iii} of Lemma~\ref{L:inequality of c_0...c_k-2}, $c_0 \cdots c_{k-2}(c_{k-1} - 1)$ is an integer power of $w$ and we conclude that $\infw{a} = \infw{d}^\star$, which is enough.

    Finally, the Assertions~\ref{pro:equi-cond-3} and~\ref{pro:equi-cond-4} are equivalent by Theorem~\ref{T:greedy condition}.
    This ends the proof.
\end{proof}

\begin{remark}
     Examining the proof of~\ref{pro:equi-cond-2} implies~\ref{pro:equi-cond-3} of Proposition~\ref{pro:equivalent-conditions-4}, we observe that it is enough to know that $\infw{d}^\star[0, k-1+|w|] \le \infw{a}[0, k-1+|w|]$, so Assertions~\ref{pro:equi-cond-1} and~\ref{pro:equi-cond-2} may be respectively replaced as follows.
    \begin{enumerate}
        \item[1'.] The word $\infw{u}[0,\length{n+1} - 1)$ is a fractional power of $\word{n}$, i.e., $\length{n+1} - 1 \leq \upperb{n}$, for all $0 \leq n \leq k - 1 + |w|$.
        \item[2'.] We have $\infw{d}^\star[0,n] \leq \infw{a}[0,n]$ for all $0 \leq n \leq k - 1 + |w|$.
    \end{enumerate}
\end{remark}

From Proposition~\ref{P:string attractor imply fractional powers} and Theorem~\ref{T:sa of prefixes}, we directly obtain the following corollary.

\begin{corollary}
    Let $c$ satisfy~\eqref{eq:WH}. Every prefix of $\infw{u}_c$ has a string attractor made of elements of $\{\length{n} : n \geq 0\}$ if and only if one of the four assertions of Proposition~\ref{pro:equivalent-conditions-4} is satisfied.
\end{corollary}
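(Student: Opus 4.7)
The plan is to obtain this corollary as an essentially immediate consequence of two already-established results, combined via the equivalence of the four assertions in Proposition~\ref{pro:equivalent-conditions-4}. Since those four conditions are interchangeable, I would work throughout with Assertion~\ref{pro:equi-cond-1}, namely $\length{n+1}-1 \leq \upperb{n}$ for all $n \geq 0$, because this is exactly the condition that interfaces with both directions of the proof.

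For the ``only if'' direction, no new argument is needed: Proposition~\ref{P:string attractor imply fractional powers} already asserts that if every prefix of $\infw{u}_c$ admits a string attractor contained in $\{\length{n} : n \geq 0\}$, then $\infw{u}[0,\length{n+1}-1)$ is a fractional power of $\word{n}$ for every $n$, which is exactly Assertion~\ref{pro:equi-cond-1}.

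For the ``if'' direction, I would assume Assertion~\ref{pro:equi-cond-1} and exhibit, for each non-empty prefix, a string attractor inside $\{\length{n} : n \geq 0\}$. Given a length $m \geq 1$, since $\length{0} = 1$ and $(\length{n})_{n \geq 0}$ is strictly increasing (as $\mu_c$ is prolongable on $0$, so each $\word{n}$ is a strict prefix of $\word{n+1}$), there is a unique $n \geq 0$ with $m \in [\length{n}, \length{n+1})$. The assumption then yields $m \leq \length{n+1} - 1 \leq \upperb{n}$, placing $m$ in the range required by the first item of Theorem~\ref{T:sa of prefixes}; that theorem then provides $\Gamma_{n-1} \cup \{\length{n}\}$ (with the convention $\Gamma_{-1} = \emptyset$ when $n = 0$) as a string attractor of $\infw{u}[0,m)$, and this set is visibly a subset of $\{\length{n} : n \geq 0\}$. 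No serious obstacle is expected here: the substantive work has already been dispatched in Proposition~\ref{P:string attractor imply fractional powers}, Theorem~\ref{T:sa of prefixes}, and Proposition~\ref{pro:equivalent-conditions-4}, and the only routine step is the brief monotonicity remark on $(\length{n})_{n \geq 0}$ needed to cover every possible prefix length.
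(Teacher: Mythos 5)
Your proof is correct and takes essentially the same route as the paper, which simply observes that the corollary follows directly from Proposition~\ref{P:string attractor imply fractional powers} (for the ``only if'' direction) and Theorem~\ref{T:sa of prefixes} (for the ``if'' direction), mediated by the equivalence of the four assertions in Proposition~\ref{pro:equivalent-conditions-4}. The small amount of extra detail you supply (monotonicity of $(\length{n})_{n\geq0}$ and locating $m$ in $[\length{n},\length{n+1})$ to invoke item~1 of the theorem) is exactly what the paper's terse ``directly obtain'' leaves implicit.
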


\section{Optimality of the string attractors}
\label{sec: optimality}

So far we were interested in obtaining a precise description (related to a specific numeration system) of a string attractor for each prefix of the infinite word of interest.
In this section, we rather focus on the size of attractors and therefore recall the following concept from~\cite{Schaeffer-Shallit-2020,SA-LATIN22}.
Given an infinite word $\infw{x}$ and any integer $n \geq 1$, we let $s_\infw{x}(n)$ denote the size of a smallest string attractor for the length-$n$ prefix of $\infw{x}$. The function $s_\infw{x} \colon n \mapsto s_\infw{x}(n)$ is called the \emph{string attractor profile function} of $\infw{x}$.
As a consequence of Theorem~\ref{T:sa of prefixes}, we obtain the following.

\begin{corollary}
\label{cor: value of profile function}
    Let $c$ satisfy~\eqref{eq:WH} and assume that one of the assertions of Proposition~\ref{pro:equivalent-conditions-4} holds.
    \begin{enumerate}
        \item For all $n\ge 0$ and all $m \in [\lowerb{n},\upperb{n}]$, $s_\infw{u}(m) = \card(\Gamma_n)$.
        \item For all large enough $m$, $k \leq s_\infw{u}(m) \leq k+1$.
    \end{enumerate}
\end{corollary}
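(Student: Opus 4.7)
The plan is to combine Theorem~\ref{T:sa of prefixes} with the elementary observation that any string attractor of a finite word must contain at least one position per distinct letter appearing in that word. In particular the matching lower bounds come essentially for free from alphabet-counting, while the upper bounds are exactly what Theorem~\ref{T:sa of prefixes} provides.

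For item~1, the second claim of Theorem~\ref{T:sa of prefixes} already yields $s_{\infw{u}}(m) \le \card(\Gamma_n)$ for every $m \in [\lowerb{n}, \upperb{n}]$. For the reverse inequality I would distinguish two regimes. When $0 \le n \le k-1$ we have $\card(\Gamma_n) = n+1$; Proposition~\ref{P:alternative definition of the words} together with a short induction on $n$ shows that $\word{n}$ contains each of the letters $0, 1, \ldots, n$, so the prefix $\infw{u}[0, m)$, which starts with $\word{n}$ since $m \ge \lowerb{n} = \length{n}$, features $n+1$ distinct letters and therefore requires at least $n+1$ positions in any string attractor. When $n \ge k$ we have $\card(\Gamma_n) = k$; here $m \ge \length{n} \ge \length{k-1}$, so $\infw{u}[0, m)$ contains $\word{k-1}$, which comprises all $k$ letters of the alphabet, again yielding the needed lower bound.

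For item~2, I would first observe that assertion~\ref{pro:equi-cond-1} of Proposition~\ref{pro:equivalent-conditions-4} gives $\length{n+1} - 1 \le \upperb{n}$, which forces the intervals $[\length{n}, \upperb{n}]$ to jointly cover every integer $m \ge 1$ (no gap can appear between two consecutive ones). Consequently, for every $m \ge \length{k+1}$ there is an index $n \ge k+1$ with $m \in [\length{n}, \upperb{n}]$, and the first claim of Theorem~\ref{T:sa of prefixes} provides the string attractor $\Gamma_{n-1} \cup \{\length{n}\}$, of cardinality $k+1$ (the union is disjoint since $\length{n} > \max \Gamma_{n-1} = \length{n-1}$, the strict monotonicity of $(\length{n})$ following from $c_0, c_{k-1} \ge 1$ in~\eqref{eq:WH}). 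The matching lower bound $s_{\infw{u}}(m) \ge k$ is once more the alphabet argument, valid as soon as $m \ge \length{k-1}$.

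The main source of care is bookkeeping rather than any deep idea: verifying the no-gap property of the intervals $[\length{n}, \upperb{n}]$, checking that $\length{n} \notin \Gamma_{n-1}$ for $n \ge k+1$ so the union really has size $k+1$, and confirming via induction that $\word{n}$ accumulates one new letter per step so that $\word{k-1}$ does exhaust the alphabet. Each of these points is short but should be made explicit to avoid off-by-one issues.
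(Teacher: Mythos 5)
Your proposal is correct and takes essentially the same approach as the paper: the upper bounds come straight from Theorem~\ref{T:sa of prefixes}, the lower bounds from counting distinct letters in the prefix (the paper phrases this slightly differently, noting that the positions in $\Gamma_n$ themselves land on distinct letters, but the force of the argument is identical), and the coverage of $\N\setminus\{0\}$ by the intervals $[\length{n},\upperb{n}]$ under assertion~\ref{pro:equi-cond-1} of Proposition~\ref{pro:equivalent-conditions-4} is exactly the ingredient the paper invokes for item~2.
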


\begin{proof}
    Using a simple induction, one can check that the positions in $\Gamma_n$ all correspond to different letters in $\infw{u}$. As any string attractor must contain at least one position per letter, we deduce the first claim from Theorem~\ref{T:sa of prefixes}.
    The second part similarly follows from Theorem~\ref{T:sa of prefixes} since, for all large enough $n$, $\Gamma_n$ is of size $k$, and the intervals $[\length{n},\upperb{n}]$ cover $\N \setminus \{0\}$ by Proposition~\ref{pro:equivalent-conditions-4}.
\end{proof}

For some parameters $c$, the intervals $[\lowerb{n},\upperb{n}]$ also cover $\N \setminus \{0\}$, implying that a string attractor of minimal size can always be given by some $\Gamma_n$. This is for example the case of $c = 211$. We characterize these parameters $c$ in the following result.
Recall the definition of the sequence $(\lowerb{n})_{n\geq 0}$ given in Equation~\eqref{eq:def-of-Pn}:
\begin{equation*}
        \lowerb{n} =
    \begin{cases}
    \length{n}, & \text{if } 0\le n \leq k-1;\\
    \length{n} + \length{n-k+1} - \length{n-k} - 1, & \text{if } n \geq k.
    \end{cases}
\end{equation*}

\begin{proposition}\label{P:condition to have size k}
    Let $c$ satisfy~\eqref{eq:WH} and let $w$ denote the longest anti-Lyndon prefix of $c_0 \cdots c_{k-2}$.
    Moreover assume that one of the assertions of Proposition~\ref{pro:equivalent-conditions-4} holds. The inequality $\lowerb{n} - 1 \leq \upperb{n-1}$ holds for all $n$ if and only if the following conditions are satisfied:
    \begin{enumerate}
        \item we have $c_{k-1} = 1$;
        \item the word $c_0 \cdots c_{k-2}$ is an integer power of $w$.
    \end{enumerate}
\end{proposition}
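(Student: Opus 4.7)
The strategy is to derive a closed-form for $D_n := Q_{n-1} - (P_n-1)$ and analyse it. For $n \le k-1$, $P_n = U_n$, so $D_n \ge 0$ by Proposition~\ref{pro:equivalent-conditions-4}~\ref{pro:equi-cond-1} and the inequality holds automatically; only $n \ge k$ is at stake. For $n \ge k$, substituting $U_n = c_0 U_{n-1} + \cdots + c_{k-1} U_{n-k}$ into $P_n - 1 = U_n + U_{n-k+1} - U_{n-k} - 2$ and using Lemma~\ref{L:inequality of c_0...c_k-2}~\ref{lem:iff-assumption-i} (i.e.\ $a_i = c_i$ for $i \le k-2$) yields, writing $m := n - k$, $s := (k-1) \bmod |w|$, and $\widetilde{w}$ for the cyclic shift of $w$ by $s$,
\[
    D_n \;=\; \widetilde{Q}_m - (c_{k-1}-1)\,U_m - U_{m+1} + 2,
    \qquad
    \widetilde{Q}_m := \sum_{i=0}^m \widetilde{w}_{i\,\bmod\,|w|}\, U_{m-i}.
\]

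\textbf{Forward direction.} Under conditions (1) and (2), $|w|$ divides $k-1$, so $s = 0$, $\widetilde{w} = w$, $\widetilde{Q}_m = Q_m$, and $c_{k-1}-1 = 0$. The formula collapses to $D_n = Q_m + 2 - U_{m+1}$, which is at least $1$ by Proposition~\ref{pro:equivalent-conditions-4}~\ref{pro:equi-cond-1}.

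\textbf{Reverse direction (contrapositive).} Assume at least one of (1), (2) fails. I split according to Lemma~\ref{L:inequality of c_0...c_k-2}~\ref{lem:iff-assumption-iii}: either (I)~$c_0 \cdots c_{k-2}(c_{k-1}-1)$ is an integer power of $w$ (necessarily a bad case, since $w_0 \ge 1$ rules out overlap with the good case), whence $\infw{d}^\star = \infw{a}$ and $Q_m = U_{m+1}-1$; or (II)~$c_{k-1}-1 < \infw{a}_{k-1}$ strictly. In case (I), $s \in \{0,|w|-1\}$, and a telescoping computation of $\widetilde{Q}_m - Q_m$ (together with $c_{k-1}-1 = w_{|w|-1}$) reduces $D_n$ to $U_m + 1 - U_{m+1}$ (when $|w| \ge 2$) or $1 - c_0 U_m$ (when $|w| = 1$); both are strictly negative for all sufficiently large $m$, since the dominant root $\beta$ of $\lambda^k = c_0 \lambda^{k-1} + \cdots + c_{k-1}$ exceeds $1$. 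In case (II), I would divide $D_n$ by $U_m$ and let $m \to \infty$ using $U_{m-i}/U_m \to \beta^{-i}$: the limit becomes $\widetilde{S} - (c_{k-1}-1) - \beta$ with $\widetilde{S} := \sum_{i \ge 0}\widetilde{w}_{i\,\bmod\,|w|}\beta^{-i}$; using the identity $\beta = \sum_{i \ge 0} d^\star_i \beta^{-i}$, the strict inequality $\infw{d}^\star < \infw{a}$ at position $k-1$, and the anti-Lyndon primitivity of $w$ (Proposition~\ref{P:lyndon are unbordered}) to compare $\widetilde{w}^\omega$ with $\infw{d}^\star$, this limit is shown to be strictly negative, hence $D_n < 0$ for all sufficiently large $n$.

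The main obstacle is converting the asymptotic strict negativity in case (II) into a concrete finite $n$ witnessing $D_n < 0$: in case (I) the formulas are already explicit, but in case (II) one must rule out small-$m$ cancellations. Bridging this gap requires either a quantitative Perron--Frobenius-type estimate on $U_{m-i}/U_m - \beta^{-i}$ or a direct discrete argument leveraging the interaction between the period $|w|$ of $\widetilde{w}^\omega$ and the period $k$ of $\infw{d}^\star$.
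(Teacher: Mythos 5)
Your forward direction is correct and in fact slightly cleaner than the paper's: you use Proposition~\ref{pro:equivalent-conditions-4}(1) directly to get $Q_m \geq U_{m+1}-1$, whereas the paper runs an induction with $P_{n-k+1}$ as an intermediary; both work. Your case~(I) of the converse is also recoverable: the ``telescoping'' you gesture at comes down to noting that $\infw{d}^\star = \infw{a}$ and hence $Q_{m-1} = U_m - 1$ by Lemma~\ref{L:decomposition of T_n+1 - 1} and Proposition~\ref{P:longest fractional power}, which yields the explicit formulas $D_n = U_m + 1 - U_{m+1}$ (or $1 - c_0 U_m$), and these do go negative for all $m$ past some threshold.

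The genuine problem is case~(II), and it is not merely the acknowledged ``obstacle'': the case split itself is flawed. You split the contrapositive according to whether $c_{k-1}-1 = \infw{a}_{k-1}$ or $c_{k-1}-1 < \infw{a}_{k-1}$, and then in case~(II) argue from the strict inequality alone that the limit $\widetilde{S} - (c_{k-1}-1) - \beta$ is negative. But the \emph{good} case (where $c_{k-1}=1$ and $c_0\cdots c_{k-2} = w^\ell$) also has $c_{k-1}-1 = 0 < c_0 = \infw{a}_{k-1}$, so it sits squarely in your case~(II). In that situation we already proved $D_n \geq 1$ in the forward direction, so $D_n/U_m \to \widetilde{S} - (c_{k-1}-1) - \beta \geq 0$; indeed an explicit computation gives $\widetilde{S} - \beta = \sum_{i \equiv k-1 \bmod k} w_0\,\beta^{-i} > 0$ there. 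Thus the claim that the limit is strictly negative in case~(II) is false as stated, and any corrected version must use the failure of conditions~(1)--(2) \emph{inside} case~(II), something your sketch (which only invokes the identity $\beta = \sum_i \infw{d}^\star_i\beta^{-i}$, the strict inequality at index $k-1$, and primitivity of $w$) never does. This is a logical gap, not just a quantitative one.

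For comparison, the paper avoids asymptotics entirely in the converse. It partitions according to whether $c_{k-1}\geq 2$ or ($c_{k-1}=1$ and $c_0\cdots c_{k-2}$ not a power of $w$) -- a split that already tracks the failure of (1)/(2) -- and in each branch produces an explicit small $n$ ($n=k$, $n=k+1$, or $n=k+|w|$) where Inequality~\eqref{Eq:equivalent inequality} fails; the key tool in the second branch is that both $yxy_0$ and $w(w_0-1)$ lie in the numeration language of $\cS_c$, so the lexicographic comparison $w(w_0-1) > yxy_0$ transfers to an integer inequality between their valuations via Remark~\ref{R:valuation of numeration systems}. That discrete argument both picks the right dichotomy and sidesteps the Perron--Frobenius estimates you identified as your remaining hurdle.
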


\begin{proof}
    Observe that, for all $n \leq k-1$, Equation~\eqref{eq: recurrence for U_n} gives
    \[
        \lowerb{n} - 1 = \length{n} - 1 = c_0 \length{n-1} + \dots + c_{n} \length{0}.
    \]
    On the other hand, Lemma~\ref{L:inequality of c_0...c_k-2} implies that $c_0 \cdots c_{k-2} = \infw{a}[0,k-2]$, so
    \[
        \upperb{n-1} = \infw{a}_0 \length{n-1} + \dots + \infw{a}_n \length{0} = \lowerb{n} - 1
    \]
    by Proposition~\ref{P:longest fractional power}. This shows that the inequality of the statement always holds for all $n \leq k-1$. Let us now show that it is also satisfied for $n \geq k$ if and only if the two conditions of the statement are satisfied.

    For $n \geq k$, we have
    \begin{align*}
        \lowerb{n} - 1
        &= \length{n} + \length{n-k+1} - \length{n-k} - 2\\
        &= c_0 \length{n-1} + \dots + c_{k-2} \length{n-k+1} + (c_{k-1}-1)\length{n-k} + \length{n-k+1} - 2,
    \end{align*}
    where the second equality follows from Equation~\eqref{eq: recurrence for U_n}, and
    \begin{align*}
        \upperb{n-1}
        &= \infw{a}_0\length{n-1} + \dots + \infw{a}_{k-2} \length{n-k+1} + \infw{a}_{k-1} \length{n-k} + \dots + \infw{a}_{n-1} \length{0}.
    \end{align*}
    Therefore, by Lemma~\ref{L:inequality of c_0...c_k-2},
    \begin{equation}\label{Eq:equivalent inequality}
        \lowerb{n} - 1 \leq \upperb{n-1} \iff (c_{k-1} - 1) \length{n-k} + \length{n-k+1} - 2 \leq \infw{a}_{k-1}\length{n-k} + \dots + \infw{a}_{n-1}\length{0}.
    \end{equation}
    
    Now, if the two conditions of the statement are fulfilled, i.e., if $c_{k-1} = 1$ and $c_0 \cdots c_{k-2}$ is an integer power of $w$, then $\infw{a}_{k-1+\ell} = \infw{a}_{\ell}$ for all $\ell \geq 0$, so
    \[
        \lowerb{n} - 1 \leq \upperb{n-1} \iff \length{n-k+1} - 2 \leq \upperb{n-k}.
    \]
    Since $\length{n-k+1} - 2 \leq \lowerb{n-k+1} - 1$, we can then conclude by induction that the inequality of the statement is satisfied for all $n\ge 0$.

    Let us now prove the converse by a detailed case-analysis, and assume first by contradiction that $c_{k-1} \ne 1$. Since $c_{k-1} \geq 1$, this in fact means that $c_{k-1} \geq 2$. For $n = k$, the right-hand side of~\eqref{Eq:equivalent inequality} becomes
    \[
        (c_{k-1} - 1) \length{0} + \length{1} - 2 \leq \infw{a}_{k-1} \length{0},
    \]
    which gives, since $\length{0} = 1$ and $\length{1} = c_0 + 1$,
    \[
        c_0 \leq c_{k-1} - 2 + c_0 \leq \infw{a}_{k-1}.
    \]
    Since $\infw{a}$ is the infinite concatenation of an anti-Lyndon word, we have $\infw{a}_{k-1} \leq \infw{a}_0 = c_0$. Therefore, we deduce that $\infw{a}_{k-1} = c_0$ and $c_{k-1} = 2$.
    Now, if $n = k+1$, the right-hand side of~\eqref{Eq:equivalent inequality} becomes
    \[
    (c_{k-1}-1) \length{1} + \length{2} - 2 \leq \infw{a}_{k-1}\length{1} + \infw{a}_k\length{0}
        \iff c_0 + 1 + (\length{2} - c_0 \length{1}) - 2 \leq \infw{a}_k.
    \]
    This leads us to consider two cases depending on $k$.
    If $k=2$, we get $\length{2} - c_0 \length{1} = c_1 = 2$ so $c_0 + 1 + 2 - 2 \le \infw{a}_2$, but as $\infw{a}_2 \leq \infw{a}_0 = c_0$, this is impossible.
    If $k\ge 3$, then $\length{2} - c_0 \length{1} = c_1 + 1$, however since $\infw{a}_{k-1}\infw{a}_k \leq \infw{a}_0\infw{a}_1 = c_0c_1$ and $\infw{a}_{k-1} = c_0$ as established above, we must have $c_0 + 1 + c_1 + 1 - 2 \le \infw{a}_k \le c_1$ which is also impossible.

    This shows that, if $\lowerb{n} - 1 \leq \upperb{n-1}$ for all $n \geq0$, then $c_{k-1} = 1$. Assume now by contradiction that $c_0 \cdots c_{k-2}$ is not an integer power of $w$. Therefore, by Lemma~\ref{L:inequality of c_0...c_k-2}, there exist $\ell \geq 1$ and a proper non-empty prefix $x$ of $w$ such that such that $c_0 \cdots c_{k-2} = w^\ell x$. Let us denote $w = xy$ for some (non-empty) word $y$.

    For $n = k + |w|$, the right-hand side of~\eqref{Eq:equivalent inequality} becomes
    \begin{align}\label{Eq:failed inequality}
        \nonumber
        \length{|w|+1} - 2 &\leq \infw{a}_{k-1}\length{|w|} + \dots + \infw{a}_{k-1+|w|} \length{0}\\
        \iff c_0\length{|w|} + \dots + (c_{|w|} - 1) \length{0} &\leq \infw{a}_{k-1}\length{|w|} + \dots + \infw{a}_{k-1+|w|} \length{0},
    \end{align}
    where we used Equation~\eqref{eq: recurrence for U_n} to develop $\length{|w|+1}$ (recall that $|w| + 1 \leq \ell |w| + |x| = k-1$).
    Notice that $\infw{a}[k-1,+\infty) = yw^\omega$, so $\infw{a}[k-1,k-1+|w|] = yxy_0$. On the other hand, $c_0 \cdots c_{|w|-1}(c_{|w|} - 1) = w(w_0 - 1)$ (recall that $w_0 \geq 1$).
    As $w$ is anti-Lyndon, we have $w > yx$ so $w(w_0-1) > yxy_0$. To obtain a contradiction, let us go back to numeration systems.

    For all $i \leq |w| + 1\le k+1$, the length-$i$ suffix $v$ of $yxy_0$ (resp., $w(w_0-1)$) is (resp., is smaller than) a factor of $\infw{a}$, so, since $w$ is anti-Lyndon, $v \leq \infw{a}[0,i) = \infw{d}^\star[0,i)$ by Lemma~\ref{L:inequality of c_0...c_k-2}. Therefore, by Lemma~\ref{L:greedyIFFlanguage} and Proposition~\ref{pro:equivalent-conditions-4}, $yxy_0$ (resp., $w(w_0-1)$) is in the numeration language of $\cS_c$. By Theorem~\ref{T:automata construction of numeration systems}, $\cS_c$ respects the genealogical order, and by Remark~\ref{R:valuation of numeration systems}, the valuation is given by the sequence $(\length{n})_{n \geq 0}$. Therefore, the word inequality $w(w_0-1) > yxy_0$ implies the (integer) inequality
    \[
    c_0\length{|w|} + \dots + c_{|w| - 1} + (c_{|w|}-1) \length{0} > \infw{a}_{k-1}\length{|w|} + \dots + \infw{a}_{k-1+|w|} \length{0}, 
   \]
    which contradicts Inequality~\eqref{Eq:failed inequality}. This ends the proof that, if $\lowerb{n} - 1 \leq \upperb{n-1}$ for all $n$, then $c_{k-1} = 1$ and $c_0 \cdots c_{k-2}$ is an integer power of $w$.
\end{proof}

We immediately obtain the next result.

\begin{corollary}
    Let $c$ satisfy~\eqref{eq:WH}. If $c = w^\ell 1$ for some $\ell \in \N \setminus \{0\}$ and some anti-Lyndon word $w$, then
    \[
        s_\infw{u}(m) = 
        \begin{cases}
            i+1, & \text{ if $m \in [\length{i}, \length{i+1})$ with $0 \leq i \leq k-2$};\\
            k, & \text{ if $m \geq \length{k-1}$}.
        \end{cases}
    \]
\end{corollary}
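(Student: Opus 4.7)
The plan is to deduce the corollary from Corollary~\ref{cor: value of profile function} and Proposition~\ref{P:condition to have size k} by checking that $c = w^\ell 1$ (with $w$ anti-Lyndon) satisfies all the needed hypotheses, so that the intervals $[\lowerb{n}, \upperb{n}]$ cover $\N \setminus \{0\}$ and the profile $s_\infw{u}(m)$ is governed by $\card(\Gamma_n)$.

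First, I would verify that the equivalent conditions of Proposition~\ref{pro:equivalent-conditions-4} hold, by showing that $c_0 \cdots c_{k-2}(c_{k-1}-1) = w^\ell \cdot 0$ is maximal among its conjugates (Assertion~\ref{pro:equi-cond-3}). This can be done by checking the three items of Lemma~\ref{L:inequality of c_0...c_k-2}: since $w$ is the longest anti-Lyndon prefix of $c_0 \cdots c_{k-2} = w^\ell$, we have $\infw{a} = w^\omega$ so $c_0 \cdots c_{k-2} = \infw{a}[0,k-2]$ (Item~(i)); and since $\infw{a}_{k-1} = w_0 = c_0 \geq 1 > 0 = c_{k-1} - 1$, Item~(ii) holds strictly, making Item~(iii) vacuous. (One small point to justify is that $w$ is indeed the longest anti-Lyndon prefix of $w^\ell$: any longer anti-Lyndon prefix would be a primitive prefix of $w^\omega$ strictly longer than $w$, and a short case analysis using Proposition~\ref{P:lyndon are unbordered} rules this out.)

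Next, I would verify the two conditions of Proposition~\ref{P:condition to have size k}: $c_{k-1} = 1$ is immediate, and $c_0 \cdots c_{k-2} = w^\ell$ is by definition an integer power of $w$. Consequently, Proposition~\ref{P:condition to have size k} gives $\lowerb{n} - 1 \leq \upperb{n-1}$ for all $n \geq 1$. Combining this with the inequality $\length{n+1} - 1 \leq \upperb{n}$ provided by Assertion~\ref{pro:equi-cond-1} of Proposition~\ref{pro:equivalent-conditions-4}, the intervals $[\lowerb{n}, \upperb{n}]$ form a covering of $\N \setminus \{0\}$.

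Finally, to conclude, I would apply Corollary~\ref{cor: value of profile function}. For $0 \leq i \leq k-2$, $\lowerb{i} = \length{i}$ by definition and $\length{i+1}-1 \leq \upperb{i}$, so $[\length{i}, \length{i+1}) \subseteq [\lowerb{i}, \upperb{i}]$; for such $m$, Corollary~\ref{cor: value of profile function} gives $s_\infw{u}(m) = \card(\Gamma_i) = i+1$. For $m \geq \length{k-1}$, the covering places $m$ in some $[\lowerb{n}, \upperb{n}]$ with $n \geq k-1$, and then $\card(\Gamma_n) = k$. The only genuinely delicate point of the argument is the identification of $w$ as the longest anti-Lyndon prefix of $w^\ell$; everything else is an assembly of previously established results.
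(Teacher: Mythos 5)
Your proposal is correct and takes essentially the same approach as the paper: the paper presents this corollary as following immediately from Proposition~\ref{P:condition to have size k} and Corollary~\ref{cor: value of profile function}, and your argument simply spells out the verifications that the paper leaves implicit (that $w$ is the longest anti-Lyndon prefix of $w^\ell$, that the hypotheses of Lemma~\ref{L:inequality of c_0...c_k-2} and Proposition~\ref{P:condition to have size k} hold, and that the overlapping intervals $[\lowerb{n},\upperb{n}]$ then pin down $s_{\infw{u}}(m)$ via $\card(\Gamma_n)$).
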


\begin{remark}
    We note that the conditions of Proposition~\ref{P:condition to have size k} are precisely those of~\cite[Theorem 1.1.]{BMP2007}, which characterizes the words $\infw{u}_c$ for which there exists a simple Parry number $\beta$ such that $d_\beta(1)=c0^\omega$ and the factor complexity function of $\infw{u}_c$ is affine.
\end{remark}

However, Proposition~\ref{P:condition to have size k} does not ban the existence of other parameters $c$ for which a string attractor of minimal size can always be given by some $\Gamma_n$. This follows from the next remark.

\begin{remark}\label{R:optimality of bounds}
The proof of Proposition~\ref{P:string attractor imply fractional powers} shows that the factor $\infw{u}[\length{n},\upperb{n}]$ does not appear before. Therefore $\Gamma_n$ cannot be a string attractor of $\infw{u}[0,m)$ if $m \geq \upperb{n} + 1$; in other words the upper bound $\upperb{n}$ is tight in Theorem~\ref{T:sa of prefixes}.
However, the lower bound $\lowerb{n}$ is not necessarily tight. For example, if $c=23$, then $\Gamma_2 = \{3,9\}$ is a string attractor of the length-$9$ prefix  $\infw{u}[0,9) = 00\underline{1}00100\underline{0}$, while $P_2 = 10$.
This is also the case for the $k$-bonacci morphisms ($c = 1^k$) where better bounds are provided in~\cite{CGRRSS23}.
\end{remark}

On the other hand, there exist parameters $c$ satisfying the conditions of Proposition~\ref{pro:equivalent-conditions-4} but for which $\Gamma_n$ is sometimes not sufficient. The simplest such example is the \emph{period-doubling word} corresponding to $c = 12$. Indeed, the length-$8$ prefix is given by $01000101$ and the first few elements of the sequence $(\length{n})_{n \geq 0}$ are $1,2,4,8$. One then easily checks that none of $\Gamma_1 = \{1,2\}$, $\Gamma_2 = \{2,4\}$, and $\Gamma_3 = \{4,8\}$ is a string attractor. However, this word admits the size-$2$ string attractors $\{3,6\}$ and $\{4,6\}$ for example. In fact, Schaeffer and Shallit~\cite{Schaeffer-Shallit-2020} proved that, if $\infw{u}$ is the period-doubling word,
\[
    s_\infw{u}(n) =
    \begin{cases}
        1, & \text{ if $n = 1$};\\
        2, & \text{ if $n \geq 2$}.
    \end{cases}
\]

Based on this observation and our experiments, we state the following conjecture.

\begin{conjecture}
    Let $c$ satisfy~\eqref{eq:WH} and assume that one of the assertions of Proposition~\ref{pro:equivalent-conditions-4} holds. Then,
    \[
        s_\infw{u}(m) = 
        \begin{cases}
            i+1, & \text{ if $m \in [\length{i}, \length{i+1})$ with $0 \leq i \leq k-2$};\\
            k, & \text{ if $m \geq \length{k-1}$}.
        \end{cases}
    \]
\end{conjecture}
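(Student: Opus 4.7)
The plan is to match the bounds of Corollary~\ref{cor: value of profile function} with exact equalities. Two pieces are straightforward, namely the lower bound (by letter counting) and the upper bound in the ``good'' ranges (already covered by Theorem~\ref{T:sa of prefixes}); the hard piece is upgrading the upper bound from $k+1$ to $k$ in the leftover ``bad'' ranges.

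\textbf{Lower bound.} Using Proposition~\ref{P:alternative definition of the words} and Remark~\ref{R:extensions of letters}, a simple induction shows that the letter $i$ appears in $\infw{u}$ for the first time at position $\length{i}-1$ for each $0\le i\le k-1$. Consequently the prefix $\infw{u}[0,m)$ uses exactly the letters $\{0,\ldots,i\}$ when $m\in[\length{i},\length{i+1})$ with $0\le i\le k-2$, and all $k$ letters when $m\ge\length{k-1}$. Since a string attractor must contain at least one position per distinct letter, this yields $s_\infw{u}(m)\ge i+1$ in the first case and $s_\infw{u}(m)\ge k$ in the second.

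\textbf{Upper bound in the good ranges.} For $m\in[\length{i},\length{i+1})$ with $0\le i\le k-2$, Proposition~\ref{pro:equivalent-conditions-4} gives $\length{i+1}-1\le\upperb{i}$, and Theorem~\ref{T:sa of prefixes}(1) at level $i$ shows that $\Gamma_{i-1}\cup\{\length{i}\}=\Gamma_i$, of size $i+1$, is a string attractor. For $n\ge k-1$ and $m\in[\lowerb{n},\length{n+1})\subseteq[\lowerb{n},\upperb{n}]$, Theorem~\ref{T:sa of prefixes}(2) yields the size-$k$ attractor $\Gamma_n$. The conjecture therefore reduces to the bad ranges $m\in[\length{n},\lowerb{n})$ with $n\ge k$ and $\length{n-k+1}>\length{n-k}+1$, where so far only the size-$(k+1)$ set $\Gamma_{n-1}\cup\{\length{n}\}$ is guaranteed to work.

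\textbf{Strategy for the bad ranges and main obstacle.} For each such $m$, I would search for a size-$k$ attractor of the form $\Gamma_{n-1}\cup\{p\}$ obtained by shifting $\length{n}$ down to some $p\in(\length{n-1},m]$. The period-doubling example discussed after Remark~\ref{R:optimality of bounds} (where the attractor $\{4,6\}$ for $m=8$ replaces $\length{3}=8$ by $\length{2}+\length{1}=6$) suggests taking $p$ at the end of a shifted copy of a prefix of $\word{n-k+1}$, concretely $p=\length{n-1}+\sum_{i\ge 1}\ell_i\length{n-1-i}$, where $(\ell_i)$ comes from the word-greedy factorization of $\infw{u}[0,m)$ given by Proposition~\ref{P:Dumont-Thomas}. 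The factors that must still be covered are those sitting inside $\infw{u}[0,\length{n-k+1})$ that do not recur across $\{\length{n-k+1},\ldots,\length{n-1}\}$; the plan is to show, using the fact that $\word{n-k}$ is a suffix of $\word{n-1}$ (from Proposition~\ref{P:alternative definition of the words} together with $c_{k-1}\ge1$) and the automaton description of $\cS_c$ from Theorem~\ref{T:automata construction of numeration systems}, that each such factor has a second occurrence crossing $p$. The main obstacle is that this verification has to be carried out uniformly in the parameter word $c$, and in particular must handle the interplay between the non-greedy-looking patterns that can appear in $\infw{u}[0,m)$ when $c_0\cdots c_{k-2}(c_{k-1}-1)$ is a proper integer power of its anti-Lyndon root; this case analysis, tied to the structure of $\cS_c$, is what keeps the statement at the level of a conjecture.
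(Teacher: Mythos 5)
The statement you were asked to prove is labelled a \emph{Conjecture} in the paper, and the authors give no proof of it: they only report that the equality is known for $m\le \length{k}-1$ and for $m$ in each interval $[\lowerb{n},\upperb{n}]$, citing Corollary~\ref{cor: value of profile function}, and that numerical experiments support the remaining cases. There is therefore no paper proof to compare your attempt against.

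Your write-up is consistent with the paper's state of knowledge, and the partial reasoning you do carry out is correct. The lower bound via first occurrences of letters at positions $\length{i}-1$, the upper bound in the ranges $[\length{i},\length{i+1})$ for $i\le k-2$ via Theorem~\ref{T:sa of prefixes}(1), and the upper bound in the ranges $[\lowerb{n},\upperb{n}]$ for $n\geq k-1$ via Theorem~\ref{T:sa of prefixes}(2) together reproduce exactly the content of Corollary~\ref{cor: value of profile function}. You correctly locate the unresolved cases as $m\in[\length{n},\lowerb{n})$ with $n\ge k$, nonempty precisely when $\length{n-k+1}>\length{n-k}+1$, and you correctly note that the paper's own discussion (the period-doubling example and Remark~\ref{R:optimality of bounds}) shows that $\Gamma_n$ cannot always be used there, so that any size-$k$ attractor would have to live outside $\{\length{n}:n\ge 0\}$. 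Your proposed shifted-position candidate $\Gamma_{n-1}\cup\{p\}$ is a sensible heuristic suggested by the $\{4,6\}$ attractor in the period-doubling case, but, as you say yourself, the required covering argument is not carried out and meets a genuine obstruction in the non-power case; that honest acknowledgement is precisely why the statement remains a conjecture in the paper as well. In short: no gap relative to the paper, because the paper proves nothing here either, and your partial analysis is sound.
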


Observe that, using the first part of Corollary~\ref{cor: value of profile function}, this equality is known to be true for all $m \leq \length{k} - 1$ and for infinitely many values of $m$.

\bmhead{Acknowledgments}

We warmly thank \'E. Charlier, S. Kreczman, and M. Rigo for useful discussions on numeration systems, especially the last two indicated~\cite{Hollander} and~\cite{Dumont-Thomas-1989} respectively.

Giuseppe Romana is partly supported by MUR PRIN project no. 2022YRB97K - 'PINC' (Pangenome INformatiCs: From Theory to Applications).
Manon Stipulanti is an FNRS Research Associate supported by the Research grant 1.C.104.24F.

\bibliography{biblio.bib}

\end{document}